\DeclareMathAlphabet{\mathpzc}{OT1}{pzc}{m}{it}
\numberwithin{equation}{section}
\theoremstyle{plain}
\newtheorem{thm}{Theorem}[section]
\newtheorem{lem}[thm]{Lemma}
\newtheorem{cor}[thm]{Corollary}
\newtheorem{prop}[thm]{Proposition}
\theoremstyle{definition}
\newtheorem{defn}{Definition}[section]
\newtheorem{exam}[thm]{Example}
\newtheorem{ntz}{Notation}[section]
\newtheorem{rmk}[thm]{Remark}
\DeclareMathAlphabet{\mathpzc}{OT1}{pzc}{m}{it}
\newcommand\ot{\mathfrak{o}}
\DeclareMathOperator{\R}{{\mathbb{R}}}
\DeclareMathOperator{\kt}{{\kappaup}}
\DeclareMathOperator{\Li}{\mathfrak{L}}
\DeclareMathOperator{\rad}{\mathrm{rad}}
\DeclareMathOperator{\Gf}{\mathbf{G}}
\DeclareMathOperator{\ad}{\mathrm{ad}}
\DeclareMathOperator{\ses}{\mathfrak{s}}
\DeclareMathOperator{\ct}{\mathfrak{c}}
\DeclareMathOperator{\zt}{\mathfrak{z}}
\DeclareMathOperator{\Z}{\mathbb{Z}}
\DeclareMathOperator{\ft}{\mathfrak{f}}
\DeclareMathOperator{\C}{\mathbb{C}}
\newcommand\wt{\mathfrak{w}}
\newcommand\gt{\mathfrak{g}}
\newcommand\Rad{\mathpzc{R}}
\newcommand\hg{\mathfrak{h}}
\newcommand\spt{\mathfrak{sp}}
\newcommand\Symm{\mathpzc{S}}
\newcommand\qq{\mathpzc{q}}
\newcommand\pq{\mathpzc{p}}
\newcommand\mt{\textswab{m}}
\newcommand\Id{\mathrm{I}}
\newcommand\diag{\mathrm{diag}}
\newcommand\nt{\mathfrak{n}}
\newcommand\gl{\mathfrak{gl}}
\newcommand\slt{\mathfrak{sl}}
\newcommand\su{\mathfrak{su}}
\newcommand\rt{\mathfrak{r}}
\newcommand\Pp{\mathfrak{P}}
\newcommand\td{\mathfrak{t}}
\newcommand\cg{\mathfrak{g}^{c}}
\newcommand\ttt{\texttt{t}}
\newcommand{\lt}{\mathfrak{l}}
\newcommand\G{\mathfrak{G}}
\newcommand\vq{\mathpzc{v}}
\newcommand\wq{\mathpzc{w}}
\newcommand\bil{\textswab{b}}
\newcommand\Hb{\mathbb{H}}
\newcommand\Der{\mathpzc{Der}}
\newcommand\aq{\mathpzc{a}}
\newcommand\hq{\mathpzc{h}}
\newcommand\Wi{\mathpzc{W}}
\newcommand\eq{\mathpzc{e}}
\renewcommand\rq{\mathpzc{r}}
\newcommand\Xx{\mathpzc{X}}
\newcommand\epi{\epsilonup}
\newcommand\sq{\mathpzc{s}}
\newcommand\cq{\mathpzc{c}}
\newcommand\K{\mathbb{K}}
\newcommand\Aa{\mathbb{A}}
\newcommand\pes{\mathpzc{W}}
\newcommand\co{\mathfrak{co}}
\newcommand\kq{\mathpzc{k}}
\newcommand\sfk{\textsf{k}}
\newcommand\sfV{\textsf{V}}
\newcommand\sfW{\textsf{W}}
\newcommand\sfJ{\textsf{J}}
\newcommand\dgt{\Tilde{\mathfrak{g}}}
\newcommand\glc{\mathfrak{gl}^{\vee}}
   \def\DHLhksqrt#1#2{\setbox0=\hbox{$#1\sqrt{#2\,}$}\dimen0=\ht0
     \advance\dimen0-0.2\ht0
     \setbox2=\hbox{\vrule height\ht0 depth -\dimen0}%
     {\box0\lower0.4pt\box2}}
\title[$\Z$-graded Lie algebras]{On some classes of 
$\Z$-graded Lie algebras}
\author{S.Marini, C.Medori, M.Nacinovich}
\address{Stefano Marini: Dipartimento di Scienze Matematiche, 
Fisiche e Informatiche\\ Universit\`a di Parma\\ Parco Area delle Scienze 
53/A (Campus), 43124 Parma
 (Italy)} \email{stefano.marini@unipr.it}
\address{Costantino Medori:
Dipartimento di Scienze Matematiche, Fisiche e Informatiche\\ 
Universit\`a di Parma\\ Parco Area delle Scienze 53/A (Campus), 
43124 Parma
 (Italy)} \email{costantino.medori@unipr.it}
\address{Mauro Nacinovich:
Dipartimento di Matematica\\ II Universit\`a di Roma
``Tor Ver\-ga\-ta''\\ Via della Ricerca Scientifica\\ 
00133 Roma
(Italy)}
\email{nacinovi@mat.uniroma2.it}
\subjclass[2000]{Primary: 53B05, 17B70
Secondary:  17B22, 70G65}
\keywords{ 
$\Z$-graded Lie algebra, 
reductive Lie algebra,
decomposable Lie algebra,
Levi-Chevalley decomposition,
Tanaka's prolongation}
\begin{document}
\maketitle
\begin{abstract}
We study finite dimensional 
\emph{almost} and \emph{quasi-effective} 
prolongations
of nilpotent $\Z$-graded 
Lie algebras, especially focusing on those
having a decomposable
reductive structural subalgebra.
Our  assumptions
generalize  \emph{effectiveness} and \emph{algebraicity}
and are appropriate to obtain
Levi-Mal\v{c}ev and Levi-Chevalley decompositions
and precisions on the heigth and other properties of
the prolongations in a very natural way.
%\par 
In a last section we systematically present
examples in which  
simple Lie algebras are 
obtained as 
prolongations, for 
reductive structural algebras of type A, B, C and D, of
nilpotent $\Z$-graded Lie algebras  arising as 
their linear representations.
\end{abstract}
\tableofcontents

\section*{Introduction}
Cartan's method for the study of equivalence and symmetries of
differential $\Gf$-structures naturally leads
 to consider
$\Z$-graded Lie algebras and their prolongations. 
This approach is clearly explained in 
 the classical books  \cite{Sternberg} of Sternberg
 and \cite{Kob} of Kobayashi. The work of 
 N. Tanaka (see e.g. \cite{Tan67, Tan70}),
extending the scope to general 
contact and $CR$ structures, 
 set the path 
 for further
developments of the subject  
(see e.g. \cite{AlSp,MMN2018,Ottazzi2011}). 
An additional motivation is the fact that
filtered Lie algebras are the core of the
algebraic model for 
transitive differential geometry (see \cite{GS}).
Their $\Z$-graded associated  objects 
have therefore an essential role in the study of 
several
differential geometrical structures.
Our interest 
in this topic was fostered
by our previous work on homogeneous
$CR$ manifolds 
(se e.g. \cite{AMN06, AMN06b, AMN10b, AMN2013, LN05, LN08,
MaNa1, MaNa2}).
The more recent \cite{NMSM} showed us 
that  
some of the $\Z$-graded Lie algebra 
naturally arising in this context 
do not satisfy all standard requirements 
of Tanaka's theory under which e.g. are usually discussed 
effective maximal prolongations 
(cf. \cite{Ottazzi2011,Warhurst2007}). 
This motivates our consideration 
of fairly  general classes of 
$\Z$-graded Lie algebras.  
Our main concern here is not on the maximality of prolongations,
for which we refer throughout 
to our \cite{MMN2018}.
Instead, we often restrict to  prolongations that are 
\textit{assumed}
to be 
finite dimensional. 
Since in this case
homogeneous terms of degree different from zero
are $\ad$-nilpotent, we keep most of the structure
of a $\Z$-graded Lie algebra 
while 
passing, following \cite[Ch.VII,\S{5}]{Bou82}, 
to its \emph{decomposable envelope}.
This observation was very useful to clarify and simplify
several points of the theory.
We found also convenient to introduce 
weaker assumptions of
effectiveness which are nevertheless sufficient to
get informations on the positive degree homogeneous 
summands.
Let us briefly summarise the contents of the paper.
\par 
In the first section we collect some general
structure property of $\Z$-graded Lie algebras. 
As explained above, a 
key role is played by the concept of decomposability,
which generalizes
algebraicity and which is used 
here also
to obtain a shorter proof
of the existence of $\Z$-graded Levi-Mal\v{c}ev and
Levi-Chevalley decompositions
(cf. \cite{CC2017,MN02}). \par 
In \S\ref{sec-effect} we consider various effectiveness conditions,
weakening 
those in \cite{Tan67}, 
and some of their consequences.\par 
We call the subalgebra $\gt_{0}$ 
of $0$-degree homogeneous terms of a $\Z$-graded Lie algebra
$\gt$ its \textit{type},
or \textit{structural subalgebra}. We look at $\gt_{0}$ as  
infinitesimally 
describing the basic 
symmetries of the structure of a differential geometrical object under
consideration. Starting from
\S\ref{sec-reduc}, we study the consequences
of assuming that the adjoint action of $\gt_{0}$ on $\gt$ is reductive.
\par  
In \S\ref{sec-quasi} we come back to the effectiveness conditions,
showing that, together with assumptions on the type,  
they 
bring some further 
precision on the 
features of the $\Z$-graded
Levi-Mal\v{c}ev decomposition.\par 
In the last \S\ref{sect9} we deal with semisimple
prolongations. After some general considerations, 
we systematically develop a series
of examples, also
relating to the exceptional Lie algebras and in which
spin representations play an important role. 
We believe that some of them could also be of some
interest in physics, where graded Lie algebras 
may contribute to better understand  basic symmetries
of nature. For the nilpotent depth $2$ case some similar
descriptions were obtained in \cite{Mor2018}.
%%%%%%
\section{$\Z$-graded Lie alebras}\label{struct}
In this preliminary section we fix some notation 
and definitions 
that will be used throughout the paper. 
We consider $\Z$-graded Lie algebras 
\begin{equation}\label{e8.1}
 \gt\,{=}\,{\sum}_{\pq\in\Z}\gt_{\pq}
\end{equation}
over a field $\K$ of
characteristic $0.$  
\begin{defn}
 The Lie subalgebra $\gt_{0}$ of the $\Z$-graded Lie algebra \eqref{e8.1} is called
 its \emph{structure subalgebra}. The \emph{depth} of $\gt$ is $\muup{=}\sup\{\pq\,{\in}\,\Z\,{\mid}\,
 \gt_{-\pq}{\neq}0\}$ end its \emph{heigth} is $\nuup{=}\sup\{\pq\,{\in}\,\Z\,{\mid}\,
 \gt_{\pq}{\neq}0\}.$
\end{defn}
\par 
The map 
\begin{equation} \label{char}
 D_{E}:\gt\to\gt,\;\;\text{with}\;\; D_{E}(X_{\pq})
 =\pq\cdot{X}_{\pq},\;\;\forall\pq\in\Z,\;\;\forall
 X_{\pq}\in\gt_{\pq}
\end{equation}
is a degree $0$ derivation of $\gt,$ that we call \emph{characteristic}.
\begin{defn}
 We call \emph{characteristic} 
 a $\Z$-graded Lie algebra $\gt$ which contains
 a homogeneous element  $E$ of degree $0$ 
 (its  \emph{characteristic element}) 
 such that $[E,X]=D_{E}(X)$ for all $X\,{\in}\,\gt.$
 \end{defn}
\begin{defn}
 A $\Z$-graded Lie algebra \, $\gt'{=}{\sum}_{{\pq}{\in}\Z}\gt'_{\pq}$\,  is called a
 \emph{prolongation} of the $\Z$-graded Lie algebra $\gt$ of \eqref{e8.1} if 
 $\gt$ is a Lie subalgebra of $\gt'$ and 
\begin{equation}
 \gt'_{\pq}=\gt_{\pq},\;\forall\pq\,{<}\,0,\;\;\; \gt'_{\pq}\supseteqq\gt_{\pq},\;\forall\pq\,{\geq}\,0.
\end{equation}
We say that a $\Z$-graded prolongation $\gt'$
is \emph{of type $\gt_{0}$} if $\gt'_{0}\,{=}\,\gt_{0}.$  
\end{defn}
 For a thorough discussion of 
prolongations of $\Z$-graded Lie algebras we refer the reader to \cite{MMN2018}. 
\begin{lem}\label{lm8.1} A $\Z$-graded Lie algebra $\gt$ admits a 
prolongation $\gt\,{\hookrightarrow}\,\cg,$
with $\cg{=}\gt$ if $\gt$ is characteristic and $\dim_{\K}(\cg/\gt)\,{=}\,1$
otherwise. 
The adjoint representation of $\cg$ restricts to a
representation of $\gt$ on $\cg,$ whose kernel 
is the centralizer of $\gt$ in $\gt_{0}$:
\begin{equation}\label{8-3eq}
\ct_0=\{A\in\gt_0\mid [A,\gt]\,{=}\,\{0\}\}.
\end{equation}
\end{lem}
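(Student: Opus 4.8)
The plan is to realise the characteristic derivation $D_{E}$ of \eqref{char} as an inner derivation of a minimal extension of $\gt$, and then to read off the centraliser. If $\gt$ is characteristic, then by definition it already contains an element $E\in\gt_{0}$ with $[E,X]=D_{E}(X)$ for all $X$, so I would simply take $\cg=\gt$ and verify the remaining assertions inside $\gt$ itself. If $\gt$ is not characteristic, I would adjoin a single new basis vector: set $\cg=\gt\oplus\K{E}$ as a $\K$-vector space, place $E$ in degree $0$, keep the bracket of $\gt$, and extend it by $[E,E]=0$ and $[E,X]=-[X,E]=D_{E}(X)$ for $X\in\gt$. Because $D_{E}$ is homogeneous of degree $0$ and preserves each summand $\gt_{\pq}$, the decomposition $\cg_{0}=\gt_{0}\oplus\K{E}$ together with $\cg_{\pq}=\gt_{\pq}$ for $\pq\neq0$ is a $\Z$-grading, and the inclusion $\gt\hookrightarrow\cg$ is a graded prolongation with $\dim_{\K}(\cg/\gt)=1$.

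The first thing to check is that $\cg$ is a Lie algebra. Antisymmetry holds by construction, and the only new instances of the Jacobi identity are those involving $E$. The instances with two or three factors $E$ vanish trivially, while
\[
[E,[X,Y]]=[[E,X],Y]+[X,[E,Y]],\qquad X,Y\in\gt,
\]
is exactly the assertion that $D_{E}$ is a derivation of $\gt$, which is \eqref{char}. Hence $\cg$ is a $\Z$-graded Lie algebra containing $\gt$, and in either case $\cg$ is itself characteristic, with characteristic element $E$.

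It then remains to identify the kernel of the restriction to $\gt$ of the adjoint representation $\ad\colon\cg\to\End(\cg)$; this kernel is the centraliser of $\cg$ in $\gt$. Suppose $X=\sum_{\pq}X_{\pq}\in\gt$ centralises $\cg$. In particular $[X,E]=0$; but $[X,E]=-D_{E}(X)=-\sum_{\pq}\pq\,X_{\pq}$, so the directness of the grading forces $X_{\pq}=0$ for every $\pq\neq0$, i.e.\ $X=X_{0}\in\gt_{0}$. Since $X$ also centralises $\gt\subseteq\cg$, we obtain $X\in\ct_{0}$. Conversely, if $A\in\ct_{0}$ then $[A,\gt]=\{0\}$ by definition, and $[A,E]=D_{E}(A)=0$ because $A$ is homogeneous of degree $0$ (when $\gt$ is characteristic one has $E\in\gt$, so this is already contained in $[A,\gt]=\{0\}$); thus $A$ centralises $\cg=\gt+\K{E}$. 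This proves \eqref{8-3eq}.

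I do not expect a genuine obstacle: the statement is a direct verification. The only points demanding a little care are the reduction of the Jacobi identity to the derivation property of $D_{E}$, and the use of the directness of the $\Z$-grading in the implication $[X,E]=0\Rightarrow X\in\gt_{0}$; treating the characteristic and non-characteristic cases uniformly in the kernel computation is the sole bit of bookkeeping.
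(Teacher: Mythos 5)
Your proposal is correct and follows essentially the same route as the paper: you build $\cg$ as the semidirect product $\gt\,{\oplus}\,\langle E\rangle$ with $[E,X]=D_{E}(X)$ and then identify the kernel via the observation that an element commuting with $E$ must be homogeneous of degree zero. The paper states this last point as the fact that the center of a characteristic $\Z$-graded Lie algebra is concentrated in degree zero, which is exactly the argument you spell out; your write-up merely makes explicit the Jacobi-identity check and both inclusions that the paper leaves to the reader.
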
\begin{proof}
If $\gt$ is not characteristic, we may consider 
$\cg=\gt\,{\oplus}\,\langle{E}\rangle$, with the gradation $\cg{=}{\sum}_{\pq\in\Z}\cg_{\pq}$
in which $\cg_{\pq}{=}\gt_{\pq}$ when $\pq{\neq}0,$ 
$\cg_{0}\,{=}\,\gt_{0}\,{\oplus}\,\langle{E}\rangle$ and 
the Lie algebra structure on $\cg$ is defined by requiring that $\gt$ is an ideal in $\cg$ and
that
$[E,X]\,{=}\,D_{E}(X)$ for all $Z\,{\in}\,\gt.$ \par 
Formula \eqref{8-3eq} follows because 
the elements of 
the center of a characteristic $\Z$-graded Lie algebra are homogeneous of degree zero.\end{proof}
\begin{defn} The $\Z$-graded Lie algebra $\cg$ 
of Lemma~\ref{lm8.1} will be called the
\emph{characteristic prolongation} of $\gt.$ 
\end{defn}

\subsection{Finite dimensional $\Z$-graded Lie algebras}
\label{secnot}
Assuming that  
$\gt$ is 
finite dimensional, 
we denote by $\rt$ its solvable radical  
and by $\nt$ its maximal nilpotent ideal.
Both $\rt$ and $\nt$ are graded ideals of $\gt$: 
\begin{equation}\label{8.2}
 \nt={\sum}_{\pq\in\Z}\nt_{\pq} \subseteq\rt={\sum}_{\pq\in\Z}\rt_{\pq}
\end{equation}
and $\nt$ consists of the $\ad_{\gt}$-nilpotent elements of $\rt.$ 
In particular, 
\begin{equation} \label{8.3}
\begin{cases}
  \nt_{\pq}=\rt_{\pq}, & \forall \pq\neq{0},\\
  \nt_{0}\subseteq\rt_{0}
\end{cases}
\end{equation}
and any derivation $D$ of $\gt$ maps $\rt$ into $\nt.$ 
Set 
\begin{equation}
 \mt={\sum}_{\pq<0}\gt_{\pq},
 \quad \gt_{+}={\sum}_{\pq{\geq}0}\gt_{\pq},
\end{equation}
and denote by $\sfk_{\gt}$ the Killing form of $\gt.$
We recall, from \cite[Ch.I,\S{5}.5,Prop.5]{n1998lie}, 
that
 the radical $\rt$  is the orthogonal of $[\gt,\gt]$ 
 with respect to~$\sfk_{\gt}.$ 
 %%%%%%%%%%%%%%%%%%%%%%%%%%%%%%%%%%%%%%%%%%%
\subsection{$\Z$-graded linear representations}
 A $\Z$-graded $\K$-vector space
is the datum of a $\K$-vector space $\sfV$ and  of its decomposition 
\begin{equation}\label{eq8.12}
\sfV={\sum}_{\pq\in\Z}\sfV_{\pq},
\end{equation}
into a direct sum of vector subspaces, indexed by the integers. 
A linear map
$\phiup\,{\in}\,\gl_{\K}(\sfV)$ is 
called \emph{homogeneous of degree $\pq$}  if 
\begin{equation*}
 \phiup(V_{\qq})\subseteq\sfV_{\qq+\pq},\;\;\forall\qq\in\Z.
\end{equation*}
The homogeneous elements of $\gl_{\K}(\sfV)$ generate a Lie subalgebra
$\glc_{\K}(\sfV)$ of $\gl_{\K}(\sfV),$ which has the natural 
$\Z$-gradation 
\begin{equation}\label{eqgrad}
\begin{cases}
 \glc_{\K}(\sfV)={\sum}_{{\pq\in\Z}}[\glc_{\K}(\sfV)]_{\pq}, 
 \;\;\text{with}\;\;\\
[\glc_{\K}(\sfV)]_{\pq}=\{A\in\gl_{\K}(\sfV)\mid A(\sfV_{\qq})
\subseteq\sfV_{\qq{+}\pq},\;\forall
\qq\in\Z\}.
\end{cases}\end{equation}
If the $\Z$-gradation of $\sfV$ is finite, i.e. 
 when the
set of $\pq\,{\in}\Z$ with
$\sfV_{\pq}\,{\neq}\,\{0\}$ is finite, 
then $\glc_{\K}(\sfV)\,{=}\,\gl_{\K}(\sfV).$ 
We keep however  also in this case
the notation $\glc_{\K}(\sfV)$ to specify that we are considering  on 
$\gl_{\K}(\sfV)$ 
the 
$\Z$-gradation \eqref{eqgrad}, related to a given $\Z$-gradation 
\eqref{eq8.12} of $\sfV.$ 
We denote by $E_{\sfV}$ the map in $\glc_{\K}(\sfV)$ with
$E_{\sfV}(\vq_{\pq})=\pq\,{\cdot}\,\vq_{\pq}$ for $\pq\,{\in}\Z$ and 
$\vq_{\pq}\,{\in}\,\sfV_{\pq}.$ 
\begin{defn} A $\Z$-graded 
\emph{linear representation} of $\gt$ is 
a linear representation 
$\rhoup\,{:}\,\gt\,{\to}\,\glc_{\K}(\sfV)$ 
of $\gt$ on a $\Z$-graded vector space 
$\sfV,$
such that 
\begin{equation} \label{qq8.11}
\rhoup(X_\pq)(\sfV_{\qq})\subseteq\sfV_{\pq+\qq},
\;\;\;\forall \pq,\qq\in\Z,\;X_{\pq}\in\gt_{\pq}.
\end{equation}
\end{defn}
\begin{prop}\label{propfinrep}
 Let $\gt$ be a $\Z$-graded characteristic Lie algebra. If 
 $\sfV$ is a finite dimensional $\K$-vector space and 
 $\rhoup\,{:}\,\gt\,{\to}\,\gl_{\K}(\sfV)$ 
 a linear Lie algebra representation of $\gt,$
 then we can find a gradation \eqref{eq8.12} of $\sfV$ for which 
 $\rhoup$
 satisfies \eqref{qq8.11}.
\end{prop}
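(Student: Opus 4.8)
The plan is to read off the grading from the action of the characteristic element. Since $\gt$ is characteristic, fix a characteristic element $E\in\gt_{0}$, so that $[E,X_{\pq}]=D_{E}(X_{\pq})=\pq\,X_{\pq}$ for every $X_{\pq}\in\gt_{\pq}$, and set $H=\rhoup(E)\in\gl_{\K}(\sfV)$. Applying $\rhoup$ to this bracket relation and using that $\rhoup$ is a representation gives
\begin{equation*}
[H,\rhoup(X_{\pq})]=\pq\,\rhoup(X_{\pq}),\qquad\forall\,\pq\in\Z,\;X_{\pq}\in\gt_{\pq},
\end{equation*}
so that $H\circ\rhoup(X_{\pq})=\rhoup(X_{\pq})\circ(H+\pq\,\id)$; by induction this yields $f(H)\circ\rhoup(X_{\pq})=\rhoup(X_{\pq})\circ f(H+\pq\,\id)$ for every $f\in\K[t]$. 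Thus $H$ will play the role of $E_{\sfV}$, and the graded pieces $\sfV_{\qq}$ should be the spectral components of $H$, grouped so that $\rhoup(X_{\pq})$ raises the degree by $\pq$.

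First I would decompose $\sfV$ under $H$. Because $\K$ is not assumed algebraically closed, rather than eigenspaces I would use the primary (rational) decomposition: writing the minimal polynomial of $H$ as $\prod_{i}p_{i}(t)^{e_{i}}$ with the $p_{i}$ distinct monic irreducibles of $\K[t]$, one has $\sfV=\bigoplus_{i}W_{i}$ with $W_{i}=\ker p_{i}(H)^{e_{i}}$. The identity above, applied with $f(t)=q(t)^{e_{i}}$ for $q(t)=p_{i}(t-\pq)$, gives $q(H)^{e_{i}}\circ\rhoup(X_{\pq})=\rhoup(X_{\pq})\circ p_{i}(H)^{e_{i}}$, whence
\begin{equation*}
\rhoup(X_{\pq})\,W_{i}\subseteq\ker q(H)^{e_{i}}.
\end{equation*}
Since $q=p_{i}(\,{\cdot}\,{-}\,\pq)$ is again monic irreducible, this kernel is contained in the primary component attached to $q$ when $q$ occurs among the $p_{j}$, and is $\{0\}$ otherwise. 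Hence $\rhoup(X_{\pq})$ sends the component with irreducible factor $p_{i}(t)$ into the one with factor $p_{i}(t-\pq)$.

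It then remains to index the components by $\Z$. I would declare two factors equivalent when they differ by an integer translation $p\mapsto p(\,{\cdot}\,{-}\,k)$, $k\in\Z$; this is an equivalence relation, and the translation parameter relating two factors is unique, since a non-constant polynomial is not invariant under a nonzero shift. In each class I fix a base factor and assign to a component the integer $k$ by which its factor is translated from the base one; then I set $\sfV_{\qq}$ to be the sum of all $W_{i}$ carrying index $\qq$. By the displayed shift property, $\rhoup(X_{\pq})$ maps a component of index $\qq$ into one of index $\qq+\pq$ (each class being stable under integer translations), which is exactly \eqref{qq8.11}.

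The one point needing care is the absence of an algebraic closure: over $\bar{\K}$ the argument is transparent, as $\rhoup(X_{\pq})$ visibly shifts the eigenspaces of $H$ by $\pq$ and one simply groups eigenvalues modulo $\Z$, but the resulting subspaces must be defined over $\K$. Routing the whole computation through the rational primary decomposition avoids any descent argument, since the polynomials $p_{i}(t-\pq)$ are automatically irreducible over $\K$. I expect this bookkeeping — checking that integer translation is a well-defined equivalence with a unique shift, and that the index is consistent across the action of all the $\rhoup(X_{\pq})$ — to be the only real content beyond the initial bracket computation.
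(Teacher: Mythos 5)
Your proof is correct and follows essentially the same route as the paper: both fix the characteristic element $E,$ pass to the primary (rational) decomposition of $\sfV$ under $\rhoup(E)$ into kernels of powers of distinct monic irreducibles, derive the commutation identity showing that $\rhoup(X_{\pq})$ translates the irreducible factor by $\pq,$ and then grade $\sfV$ by grouping the primary components into integer-translation equivalence classes of factors with a chosen base representative in each class. Your explicit remarks on the uniqueness of the translation parameter (valid since $\mathrm{char}\,\K=0$) and on why the shifted factor's kernel is either a primary component or zero merely spell out bookkeeping the paper leaves implicit.
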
 
\begin{proof} Let $E$ be the characteristic element of $\gt.$ 
For polynomials $\psiup(\ttt)\,{\in}\,\K[\ttt],$ we set 
\begin{equation*}
 V_{\psiup(\ttt)}=\ker(\psiup(\rhoup(E)))=
 \{\vq\in\sfV\mid \psiup(\rhoup(E))(\vq)=0\}.
\end{equation*} 
Take the spectral decomposition of $\sfV$ with respect to $\rhoup(E)$:
\begin{equation*}
 \sfV=\sfV_{\psiup_{1}(\ttt)}\oplus\cdots\oplus\sfV_{\psiup_{m}(\ttt)}.
\end{equation*}
Here
 $\psiup_{1}(\ttt),\hdots,\psiup_{m}(\ttt)$ are powers of distinct irreducible
 monic 
polynomials in $\K[\ttt].$ 
For every $\pq\,{\in}\,\Z$ and
$X_{\pq}\,{\in}\,\gt_{\pq},$ 
we have $\rhoup(X_{\pq})\,{\circ}(\rhoup(E){+}\pq{\cdot}\Id_{\sfV})
=\rhoup(E)\,{\circ}\,\rhoup(X_{\pq})$ and hence, for every positive integer $k,$ 
\begin{equation*} \rhoup(X_{\pq})
\circ(\rhoup(E)\,{+}\,\pq{\cdot}\Id_{V})^{k}
=\rhoup(E)^{k}\circ\rhoup(X_{\pq}).
 \end{equation*}
 This shows that 
\begin{equation*}
 \rhoup(X_{\pq})(\sfV_{\psiup(\ttt)})
 \subseteq\sfV_{\psiup(\ttt+\pq)},\;\;\forall\psiup(\ttt)\in\K[\ttt].
\end{equation*} 
Fix $\phiup_{1}(\ttt),\hdots,\phiup_{r}(\ttt)\in
\{\psiup_{1}(\ttt),\hdots,\psiup_{m}(\ttt)\}$ such that 
\begin{equation*} 
\begin{cases}
 \phiup_{i}(\ttt)\neq\phiup_{j}(\ttt{+}\pq),\;\;\forall j\,{\neq}\,i,\;\forall\pq\,{\in}\,\Z,\\
 \{\psiup_{1},\hdots,\psiup_{m}\}
 \subset\{\phiup_{i}(\ttt\,{+}\,\pq)
 \mid 1{\leq}i{\leq}\rq,\; \pq\in\Z\}.
\end{cases}
\end{equation*} 
Then the gradation 
\begin{equation*}
 \sfV={\sum}_{\pq\in\Z}\sfV_{\pq},\;\;\text{with}\;\; 
 \sfV_{\pq}={\sum}_{i=1}^{r}\sfV_{\phi_{i}(\ttt+\pq)}
\end{equation*}
satisfies the requirements of the Theorem. 
\end{proof}
\begin{rmk}
 Any $\Z$-graded linear representation of a $\Z$-graded Lie algebra 
 $\gt$ extends to
 a linear representation of its characteristic prolongation $\cg.$ 
 Thus Prop.~\ref{propfinrep}
 tells us that actually all finite dimensional 
 $\Z$-graded linear representations of $\gt$
 are restrictions to $\gt$ of linear representations of $\cg.$ 
 The gradation on $\sfV$ is not
 uniquely determined: indeed we can always change 
its grading
 by shifting the indices
 by an integral constant and this can be done independently 
 on each subspace 
 $V^{i}={\sum}_{\pq\in\Z}V_{\phiup_{i}(\ttt+\pq)}.$ Note also that 
 $\rhoup(E)$ may not be
 a characteristic element of~$\glc_{\K}(\sfV).$ In the finite dimensional case we can normalise 
 the grading of $\sfV$ by requiring that $E_{\sfV}\,{\in}\,\slt_{\K}(\sfV),$ i.e. that
 ${\sum}_{\pq\in\Z}\,(\pq\,{\cdot}\dim_{\K}(\sfV_{\pq}))\,{=}\,0.$ 
\end{rmk}
We obtain a graded version of Ado's theorem.
\begin{thm}\label{ado}
 Every finite dimensional $\Z$-graded Lie algebra admits a $\Z$-graded
 faithful representation such that the linear maps corresponding to 
 elements of its maximal nilpotent ideal  are nilpotent. 
\end{thm}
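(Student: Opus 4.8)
The plan is to reduce the statement to the classical strong form of Ado's theorem by passing through the characteristic prolongation and then invoking Proposition~\ref{propfinrep}. The only new content beyond the classical theorem is the existence of a compatible $\Z$-gradation on the representation space, and this is exactly what Proposition~\ref{propfinrep} supplies, provided we first arrange for the ambient Lie algebra to be characteristic.

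First I would replace $\gt$ by its characteristic prolongation $\cg$ of Lemma~\ref{lm8.1}. This $\cg$ is again finite dimensional and, by construction, characteristic, with characteristic element $E$. Since $\K$ has characteristic $0$, the refined (strong) form of Ado's theorem provides a faithful finite dimensional linear representation $\rhoup\colon\cg\to\gl_{\K}(\sfV)$ in which every element of the maximal nilpotent ideal $\nil(\cg)$ is represented by a nilpotent endomorphism. I would cite this refinement rather than reprove it, as it is the substantive classical input.

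Next I would verify that the maximal nilpotent ideal $\nt$ of $\gt$ is contained in $\nil(\cg)$. Indeed $\nt$ is a nilpotent subalgebra, and, since the bracket of $\cg$ restricts to that of $\gt$ on $\gt\subseteq\cg$, the lower central series of $\nt$ computed in $\cg$ agrees with the one computed in $\gt$; moreover $[E,\nt_{\pq}]=\pq\cdot\nt_{\pq}\subseteq\nt_{\pq}$, so $[\cg,\nt]\subseteq\nt$ and $\nt$ is a nilpotent ideal of $\cg$, whence $\nt\subseteq\nil(\cg)$. Consequently $\rhoup(X)$ is nilpotent for every $X\in\nt$. Because $\cg$ is characteristic, Proposition~\ref{propfinrep} now yields a gradation $\sfV={\sum}_{\pq\in\Z}\sfV_{\pq}$ for which $\rhoup$ satisfies the homogeneity condition \eqref{qq8.11} relative to $\cg$. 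Restricting $\rhoup$ to the graded subalgebra $\gt\subseteq\cg$ (so that $\gt_{\pq}\subseteq\cg_{\pq}$ for every $\pq$) produces a $\Z$-graded representation of $\gt$; it is faithful because $\rhoup$ is faithful on $\cg\supseteq\gt$, and the elements of $\nt$ still act nilpotently because their operators are unchanged by restriction.

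The steps are essentially formal once the strong Ado theorem is in hand, so I do not expect a genuine obstacle, only two points requiring care. One must confirm that passing from $\gt$ to its characteristic prolongation does not disturb the inclusion $\nt\subseteq\nil(\cg)$ — this is the verification that $\nt$ remains a nilpotent ideal after adjoining $E$. One must also note that the re-grading of $\sfV$ furnished by Proposition~\ref{propfinrep}, being defined through the generalized eigenspaces of $\rhoup(E)$, alters neither $\rhoup$ nor the operators $\rhoup(X)$, so the nilpotency established for $X\in\nt$ is preserved throughout.
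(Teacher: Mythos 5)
Your proof is correct and follows essentially the same route as the paper's: pass to the characteristic prolongation $\cg$, apply the strong form of Ado's theorem to $\cg$, and use Proposition~\ref{propfinrep} to produce the compatible gradation of $\sfV$ before restricting to $\gt$. The only difference is that you carefully verify the inclusion $\nt\subseteq\nil(\cg)$ (which is all the argument needs), whereas the paper simply asserts that the maximal nilpotent ideal of $\cg$ coincides with $\nt$.
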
\begin{proof} 
Let $\gt$ be a finite dimensional $\Z$-graded Lie algebra
and $\cg$ its characteristic prolongation.
By Ado's theorem (see e.g. \cite[Ch.I,\S{7.3},Theorem~3]{n1998lie}) 
$\cg$ has 
a faithful 
finite dimensional linear representation 
$\rhoup{:}\cg{\to}\gl_{\K}(\sfV)$
such that 
$\rhoup(X)$ is nilpotent for all $X$ 
in the maximal nilpotent ideal of $\cg,$
which coincides with the maximal nilpotent ideal $\nt$
of $\gt.$ By Prop.~\ref{propfinrep} we can find a $\Z$-gradation of $\sfV$
to make $\rhoup$ graded.\end{proof}
\begin{rmk} 
 If $\ct_{0}\,{=}\,0,$ i.e. if no nonzero element of the center 
 is homogeneous of degree $0,$ 
 then the restriction to $\gt$ of the adjoint representation 
 of its characteristic prolongation 
 $\cg$ is a faithful $\Z$-graded
 finite dimensional linear representation 
 of $\gt,$ mapping the elements of $\nt$
 into nilpotent endomorphisms. 
\end{rmk}
\begin{defn} Let $\gt$ be a finite dimensional $\Z$-graded 
Lie algebra 
and $\sfV$ a $\Z$-graded finite dimensional vector space over 
$\K.$ 
 A finite dimensional \mbox{$\Z$-graded} faithful representation 
 $\rhoup\,{:}\,\gt\,{\to}\,\glc_{\K}(\sfV)$ 
 for which $\rhoup(\nt)$ is a Lie algebra 
 of nilpotent endomorphisms of $\sfV$ will be called
 a \emph{realisation} of $\gt.$  
\end{defn}
\subsection{Decomposable Lie algebras} 
In this subsection we will not assume that the Lie algebras we consider are $\Z$-graded.\par
Let $\sfV$ be a finite dimensional vector 
space over a field $\K$ of characteristic $0.$ 
Every
$A$ in  $\gl_{\K}(\sfV)$ admits a Jordan-Chevalley decomposition:
there are $A_{s},A_{n}\,{\in}\,\gl_{\K}(\sfV)$ such that
\begin{align*}
A=A_{s}+A_{n},\;[A_{s},A_{n}]=0,\;
\; \text{with $A_{s}$ semisimple and
$A_{n}$ nilpotent on $\sfV.$}
\end{align*}
The summands $A_{s},$   $A_{n}$ are uniquely determined and
$A_{s}\,{=}\,\psiup_{s}(A),$ $A_{n}\,{=}\,\psiup_{n}(A)$ with
$\psiup_{s}(\ttt),\psiup_{n}(\ttt)\,{\in}\,\K[\ttt]$ 
polynomials with
no constant term. \par 
We recall some notion and results from
\cite[Ch.VII,\S{5}]{Bou82}. \par 
The \emph{decomposable envelope} $\dgt$ 
of a Lie subalgebra
$\gt$ of  $\gl_{\K}(\sfV)$ is the Lie subalgebra of $\gl_{\K}(\sfV)$ generated by
the semisimple and nilpotent components  of its elements. Note that 
$\gt$ is an ideal in $\dgt.$ We say that $\gt$ is
\emph{decomposable in $\sfV$}
when $\dgt\,{=}\,\gt.$  A necessary and sufficient condition for $\gt$ to
be decomposable in $\sfV$ is that its solvable radical $\rt$ is decomposable in $\sfV.$ 
\par 
We denote by
\begin{equation}
 \nt_{\sfV}=\{X\in\rt\mid X\;\text{is nilpotent on $\sfV$}\}
\end{equation}
the maximal ideal in $\gt$ consisting of nilpotent endomorphisms of $\sfV.$ We have 
\begin{equation*}
 [\gt,\gt]\cap\rt\subseteq\nt_{\sfV}\subseteq\nt.
\end{equation*} 
We say that $\gt$ is \emph{$\sfV$-reductive} if the $\gt$-module 
$\sfV$ is
completely reducible. A $\sfV$-reductive $\gt$ is also
decomposable in $\sfV.$ \par 
\begin{defn}
 Let $\gt$ be a Lie subalgebra of $\gl_{\K}(\sfV).$ A \emph{Levi-Chevalley decomposition} of
 $\gt$ in $\sfV$ is a direct sum decomposition 
\begin{equation}\label{LCdec}
 \gt=\lt_{\sfV}\oplus\nt_{\sfV},
\end{equation}
where $\lt_{\sfV}$ is a $\sfV$-reductive Lie subagebra of $\gt.$ \par
 We call such an $\lt_{\sfV}$ 
a \mbox{\emph{$\sfV$-reductive Levi factor}} of $\gt.$ 
\end{defn}
The condition of being decomposable in $\sfV$ 
is necessary and sufficient to ensure that $\gt$ admits
a Levi-Chevalley decomposition in $\sfV$ and the group of elementary
automorphisms of $\gt$ is transitive on the set of $\sfV$-reductive Levi factors of $\gt.$
This is the contents of \cite[Ch.VII,\S{5}, Proposition~7]{Bou82}, that we further precise
by giving here the  
analogue  of a theorem that Mostow (\cite{Most56}) proved for algebraic groups.
\begin{prop}\label{prop8.7}
 Let $\gt$ be a decomposable Lie subalgebra of $\gl_{\K}(\sfV)$ and 
 $\td$ a commutative Lie subalgebra of $\gt,$ consisting of 
 semisimple endomorphisms. 
 Then $\gt$ has a $\sfV$-reductive Levi factor 
 $\lt_{\sfV}$ containing $\td.$ 
\end{prop}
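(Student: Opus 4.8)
The plan is to deduce the statement from Bourbaki's Proposition~7 by first producing a $\sfV$-reductive Levi factor that is stable under $\td$, and then showing that any such $\td$-stable factor is automatically forced to contain $\td$. First I would invoke Proposition~7 to fix one $\sfV$-reductive Levi factor $\lt_0$ and to record that the elementary automorphisms $\exp(\ad\nt_{\sfV})$ act transitively on the set $\mathcal{L}$ of all $\sfV$-reductive Levi factors; since the acting group is unipotent, $\mathcal{L}$ is (isomorphic to) an affine space. Let $\Tf\subseteq\GL_\K(\sfV)$ be the algebraic torus with $\Lie(\Tf)=\td$. As $\td$ normalises the ideal $\nt_{\sfV}$, conjugation by $\Tf$ preserves $\nt_{\sfV}$ and permutes $\mathcal{L}$, acting affinely; and since $\Tf$ is a torus it is linearly reductive, so this affine action has a fixed point. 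That fixed point is a $\Tf$-invariant, hence $\ad\td$-invariant, $\sfV$-reductive Levi factor $\lt_{\sfV}$, giving an $\ad\td$-stable splitting $\gt=\lt_{\sfV}\oplus\nt_{\sfV}$. \emph{This averaging step, where the linear reductivity of $\td$ is essential, is the main obstacle; everything afterwards is routine structure theory.}

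Next I would pass to the centraliser $\zt=\{X\in\gt\mid[X,\td]=0\}$. Since every element of $\td$ is semisimple, $\ad\td$ acts semisimply on $\gt$ and $\zt$ is exactly its zero weight space; intersecting the $\ad\td$-stable splitting above with $\zt$ yields $\zt=(\lt_{\sfV}\cap\zt)\oplus(\nt_{\sfV}\cap\zt)$. Here $\nt_{\sfV}\cap\zt$ is an ideal of $\zt$ consisting of nilpotent endomorphisms, while $\lt_{\sfV}\cap\zt$ is the centraliser of the torus $\td$ inside the reductive algebra $\lt_{\sfV}$, and is therefore itself reductive. Because the quotient $\zt/(\nt_{\sfV}\cap\zt)\cong\lt_{\sfV}\cap\zt$ is reductive, $\nt_{\sfV}\cap\zt$ is the maximal nilpotent ideal of $\zt$, so the displayed splitting is a Levi--Chevalley decomposition of $\zt$ in $\sfV$.

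Finally I would exploit that $\td$ lies in the centre of $\zt$ and consists of semisimple elements. A central element lies in $\rad(\zt)$, and intersecting the above splitting with the radical gives $\rad(\zt)=\td_0\oplus(\nt_{\sfV}\cap\zt)$, where $\td_0=\rad(\zt)\cap\lt_{\sfV}$ is the central torus of $\lt_{\sfV}\cap\zt$ and is a maximal torus of the solvable algebra $\rad(\zt)$. For $T\in\td$ the element $T$ commutes with $\td_0$ and is semisimple, so $\td_0+\K T$ is again a commutative algebra of semisimple elements, i.e. a torus containing $\td_0$; maximality then forces $T\in\td_0\subseteq\lt_{\sfV}$. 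Hence $\td\subseteq\lt_{\sfV}\cap\zt\subseteq\lt_{\sfV}$, and $\lt_{\sfV}$ is the required $\sfV$-reductive Levi factor containing $\td$.
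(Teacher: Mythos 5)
Your strategy --- first average to obtain an $\ad(\td)$-stable $\sfV$-reductive Levi factor, then show that any such factor must contain $\td$ --- is a genuinely different route from the paper's, which never stabilises a Levi factor: the paper takes $\lt_{\sfV}$ with $\td\cap\lt_{\sfV}$ maximal and, from a hypothetical $A\in\td\setminus\lt_{\sfV}$, builds inside a small solvable subalgebra an elementary automorphism fixing $\td\cap\lt_{\sfV}$ pointwise and carrying $\lt_{\sfV}$ to a Levi factor containing $\td\cap\lt_{\sfV}\oplus\langle A\rangle$, contradicting maximality. Your route could in principle be completed, but as written it has a genuine gap precisely at the step you yourself call the main obstacle, and the justification you offer for it is faulty on two counts. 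First, there is in general no algebraic torus with $\Lie(\Tf)=\td$: already for $\td=\K\cdot\diag(1,\lambda)\subseteq\gl_{2}(\K)$ with $\lambda\notin\mathbb{Q}$, the smallest algebraic subgroup whose Lie algebra contains $\td$ is the full two-dimensional diagonal torus, so you must pass to the algebraic hull of $\td$ (whose Lie algebra may strictly contain $\td$; harmless, but it has to be said, and over a non-algebraically closed $\K$ it is only a group of multiplicative type). Second, and more seriously, transitivity of the unipotent group $U$ generated by $\exp(\ad\nt_{\sfV})$ only exhibits $\mathcal{L}$ as a homogeneous space $U/U_{0}$, a variety \emph{abstractly} isomorphic to an affine space; an abstract isomorphism of varieties carries no affine structure, and nothing in your argument makes the torus act by affine transformations with respect to one. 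The honest implementation is an induction along the descending central series of $U$, realising the relevant sets at each stage as torsors under vector groups on which the torus acts linearly and using $H^{1}(\Tf,W)=0$; alternatively one can quote the splitting and conjugacy theorem for extensions of linearly reductive groups by unipotent algebraic groups --- but that is Mostow's theorem at the group level, essentially the statement whose Lie-algebra analogue this Proposition is, so quoting it replaces the proof by a citation. As it stands, the crucial step is asserted, not proved.

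There is also a secondary gap in your passage through $\zt$: what your final step needs is that every element of $\td_{0}=\rad(\zt)\cap\lt_{\sfV}$ is a \emph{semisimple endomorphism of $\sfV$}, and the fact you invoke (the centraliser of a torus in a reductive algebra is reductive) yields only abstract reductivity of $\lt_{\sfV}\cap\zt$, not that its centre acts semisimply on $\sfV$; moreover ``the quotient is reductive, hence $\nt_{\sfV}\cap\zt$ is the maximal nilpotent ideal'' is a non sequitur (a one-dimensional centre is a reductive algebra equal to its own maximal nilpotent ideal), and the maximality of $\td_{0}$ as a torus of $\rad(\zt)$ is asserted without proof. Fortunately, once the averaging step is secured, your steps involving $\zt$, $\rad(\zt)$ and maximal tori can be replaced by three lines that avoid all of these issues: for $T\in\td$ write $T=T_{l}+T_{n}$ along the $\ad(\td)$-stable splitting $\gt=\lt_{\sfV}\oplus\nt_{\sfV}$; since $\ad(T)$ preserves both summands and kills $T$, both components $[T,T_{l}]$ and $[T,T_{n}]$ vanish, hence $[T_{l},T_{n}]=0$; then $T_{l}=T+({-}T_{n})$ exhibits the Jordan--Chevalley decomposition of $T_{l}$ (commuting semisimple plus nilpotent), and since a $\sfV$-reductive subalgebra is decomposable in $\sfV$ (as the paper records), the nilpotent part ${-}T_{n}$ of $T_{l}\in\lt_{\sfV}$ lies in $\lt_{\sfV}\cap\nt_{\sfV}=\{0\}$, so $T=T_{l}\in\lt_{\sfV}$. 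This shortcut also makes clear that the entire difficulty of the Proposition is concentrated in the fixed-point step, which is exactly the part your proposal leaves unproved.
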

\begin{proof} Take a $\sfV$-reductive Levi factor  $\lt_{\sfV}$ of $\gt$ 
for which $\lt_{\sfV}\,{\cap}\,\td$ is maximal. We claim that $\td\,{\subseteq}\,\lt_{\sfV}.$ 
To prove this fact we argue
by contradiction. Assume that there is an $A\,{\in}\,\td{\backslash}\lt_{\sfV}.$
This $A$ uniquely decomposes into a sum $A\,{=}\,A'{+}N,$ with $A'\,{\in}\lt_{\sfV}$
and $N\,{\in}\,\nt_{\sfV}.$ If $B\,{\in}\,\td\,{\cap}\lt_{\sfV},$ from 
\begin{equation*}
 0=[A,B]=[A',B]+[N,B]
\end{equation*}
we obtain that both $[A',B]\,{=}\,0$ and $[N,B]\,{=}\,0,$ because the first summand
is in $\lt_{\sfV}$ and the second in $\nt_{\sfV}.$ Let $X$ and $Y$ be the semisimple
and nilpotent summands in the Jordan-Chevalley decomposition
of $A'.$ Being polynomials in $A',$ they both commute with the elements of $\td\,{\cap}\,\lt_{\sfV}.$ 
Let us consider now the Lie subalgebra $\kt$ of $\gl_{\K}(\sfV)$ generated by
$X,Y,N.$ It is solvable and the ideal 
$\nt_{\sfV}'$ of its nilpotent endomorphisms, which is
generated by $Y,N,[X,N],$ has codimension $1$ in $\kt$ and its elements commute
with those in $\td\,{\cap}\,\lt_{\sfV}.$ We note that both $\langle{X}\rangle$
and $\langle{A}\rangle$ are $\sfV$-reductive Levi factors of $\kt.$ Then
there is an elementary automorphism $\Psi$
of $\kt$ mapping $\langle{X}\rangle$ onto
$\langle{A}\rangle.$ This $\Psi$ is a composition of 
automorphisms of the form
$\exp(\ad_{\kt}(T)),$ with $T\,{\in}\,\nt_{\sfV}'.$ Its extension to an automorphism $\tilde{\Psi}$
of $\gt$ is a composition of  $\exp(\ad_{\gt}(T)),$ with $T\,{\in}\,\nt_{\sfV}'$ and hence
leaves invariant all elements of $\td{\cap}\lt_{\sfV}.$ Therefore $\tilde{\Psi}(\lt_{\sfV})$ is
a $\sfV$-reductive Levi factor with 
\begin{equation*}
 \td\cap\lt_{\sfV}\oplus\langle{A}\rangle\subseteq\tilde{\Psi}(\lt_{\sfV}),
\end{equation*}
contradicting the choice of $\lt_{\sfV}$ and showing therefore that in fact 
$\td\,{\subseteq}\,\lt_{\sfV}.$ The proof is complete. 
\end{proof}

\subsection{Decomposable prolongations} We specialise the general 
notions of the previous subsection
to the case of $\Z$-graded Lie algebras.
\begin{prop}
Let $\sfV$ be a finite dimensional $\Z$-graded $\K$-vector space and
$\gt$ a $\Z$-graded Lie subalgebra of $\gl_{\K}(\sfV).$ %$\glc_{\K}(\sfV).$ 
 Then its decomposable envelope $\dgt$ in $\sfV$ is the $\Z$-graded 
 Lie subalgebra 
\begin{equation}\label{e8.16}
 \dgt={\sum}_{\pq\in\Z}\dgt_{\pq}
\end{equation}
 of $\gl_{\K}(\sfV),$ where
 $\dgt_{\pq}\,{=}\,\gt_{\pq}$ 
 for all $\pq{\neq}0,$
while $\dgt_{0}$ equals  the decomposable envelope of $\gt_{0}$ 
 in $\sfV.$ 
\end{prop}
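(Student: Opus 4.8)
The plan is to exhibit the right–hand side of \eqref{e8.16} as a decomposable subalgebra squeezed between $\gt$ and $\dgt$, and then to invoke the minimality of the decomposable envelope. Write $\lt$ for the decomposable envelope of $\gt_{0}$ in $\sfV$; since $\gt_{0}\subseteq[\glc_{\K}(\sfV)]_{0}$ and the Jordan--Chevalley components of a degree-$0$ endomorphism are polynomials in it and hence again of degree $0$, we have $\lt\subseteq[\glc_{\K}(\sfV)]_{0}$. First I would record the elementary remark that every homogeneous element of $\glc_{\K}(\sfV)$ of degree $\pq\neq0$ is nilpotent (its powers push the finite gradation of $\sfV$ out of range), so for $\pq\neq0$ the elements of $\gt_{\pq}$ are already their own nilpotent Jordan components. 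Put $\hg=\lt+\sum_{\pq\neq0}\gt_{\pq}$. I would then check that $\hg$ is a $\Z$-graded Lie subalgebra: the only nontrivial point is $[\lt,\gt_{\pq}]\subseteq\gt_{\pq}$, which holds because $\ad(A)$ preserves $\gt_{\pq}$ for $A\in\gt_{0}$ while $\ad(A_{s})=\ad(A)_{s}$ and $\ad(A_{n})=\ad(A)_{n}$ are polynomials without constant term in $\ad(A)$, hence preserve the same subspace; as the normaliser of $\gt_{\pq}$ is a subalgebra, the generators $A_{s},A_{n}$ of $\lt$, and therefore all of $\lt$, normalise $\gt_{\pq}$.

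The inclusions $\gt\subseteq\hg\subseteq\dgt$ are then immediate, since $\gt_{0}\subseteq\lt$ and the Jordan components of elements of $\gt_{0}\subseteq\gt$ lie in $\dgt$. Because the decomposable envelope is the smallest decomposable subalgebra containing $\gt$ (see \cite[Ch.VII,\S5]{Bou82}), the proposition will follow once I prove that $\hg$ is decomposable in $\sfV$: this yields $\dgt\subseteq\hg\subseteq\dgt$, and reading off graded components gives $\dgt_{0}=\lt$ and $\dgt_{\pq}=\gt_{\pq}$ for all $\pq\neq0$. The heart of the matter, and the step I expect to be the main obstacle, is precisely this decomposability of $\hg$. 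Using the criterion recalled before the statement, I would reduce it to the decomposability of $\rt:=\rad(\hg)$, a graded solvable ideal. For $\pq\neq0$ the component $\rt_{\pq}$ consists of nilpotent endomorphisms, hence lies in the ideal $\nt_{\sfV}(\rt)$ of $\sfV$-nilpotent elements of $\rt$; so the genuine issue is confined to the degree-$0$ part $\rt_{0}=\rt\cap\hg_{0}$.

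For that part I would argue as follows. Let $r\in\rt_{0}$. Since $\hg_{0}=\lt$ is decomposable, the Jordan components $r_{s},r_{n}$ lie in $\hg_{0}\subseteq\hg$ and are again of degree $0$; moreover $\ad_{\hg}(r_{s})$ is semisimple and $\ad_{\hg}(r_{n})$ nilpotent, being the restrictions to the invariant subspace $\hg$ of $\ad(r_{s})=\ad(r)_{s}$ and $\ad(r_{n})=\ad(r)_{n}$. Choosing a Levi factor and passing to the semisimple quotient $\hg/\rt$, the image of $r$ vanishes, so the induced derivations of $\hg/\rt$ attached to $r_{s}$ and $r_{n}$ are respectively semisimple and nilpotent and sum to zero; hence both vanish, the images of $r_{s}$ and $r_{n}$ are central in $\hg/\rt$, and therefore zero. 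Thus $r_{s},r_{n}\in\rt_{0}$, i.e. $\rt_{0}$ is decomposable.

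Finally I would assemble a Chevalley decomposition of $\rt$. Decomposability of $\rt_{0}$ provides $\rt_{0}=\td\oplus\nt_{\sfV}(\rt_{0})$, with $\td$ a commutative subalgebra of semisimple elements; since $\nt_{\sfV}(\rt_{0})$ and all the $\rt_{\pq}$ with $\pq\neq0$ lie in $\nt_{\sfV}(\rt)$, one obtains $\rt=\td\oplus\nt_{\sfV}(\rt)$, a splitting of $\rt$ into a torus of semisimple elements and its ideal of $\sfV$-nilpotents. By \cite[Ch.VII,\S5]{Bou82} the existence of such a splitting is equivalent to decomposability, so $\rt$, and hence $\hg$, is decomposable, which completes the proof. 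The only external inputs are the standard facts from \cite[Ch.VII,\S5]{Bou82}: that the envelope is the minimal decomposable overalgebra, the reduction of decomposability to that of the radical, and the equivalence of decomposability with the existence of a torus-plus-nilpotent splitting of the radical.
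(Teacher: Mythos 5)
Your proof is correct, and it takes a genuinely different route from the paper's. The paper settles the key point in one stroke: the right-hand side of \eqref{e8.16} is generated by the elements of ${\bigcup}_{\pq\neq0}\gt_{\pq},$ which are nilpotent, together with the semisimple and nilpotent components of the elements of $\gt_{0},$ hence it is decomposable by \cite[Ch.VII,\S{5.5},Theorem~1]{Bou82}; the only check carried out is that these Jordan components, being polynomials in the given element, commute with the characteristic element $E_{\sfV}$ and therefore stay in degree $0.$ You avoid applying that theorem to $\hg$ and instead prove its decomposability by hand: reduction to the radical, the semisimple-quotient argument showing that $\rt_{0}$ is closed under Jordan components, and reassembly via the torus-plus-nilpotent splitting of a solvable algebra. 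Each approach buys something. The paper's is far shorter; yours is more self-contained and, notably, makes explicit a point the paper glosses over, namely that \eqref{e8.16} is actually closed under brackets, i.e. $[\dgt_{0},\gt_{\pq}]\subseteq\gt_{\pq},$ which you get from $\ad(A_{s})=(\ad A)_{s}$ being a constant-free polynomial in $\ad(A)$ together with the normaliser being a subalgebra. One caveat: your final citation is used in the converse direction (a splitting $\rt=\td\oplus\nt_{\sfV}(\rt)$ implies decomposability), and the cleanest justification of that converse is precisely Bourbaki's Theorem~1 applied to the generators $\td\cup\nt_{\sfV}(\rt),$ each semisimple or nilpotent; so you have not eliminated the paper's main ingredient, only relocated it into a cited standard fact. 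That is a matter of bookkeeping, not a gap.
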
 
\begin{proof}
The Lie algebra defined by \eqref{e8.16} 
is decomposable by \cite[Ch.VII, \S{5.5}, Theorem~1]{Bou82},
being generated by the elements of 
${\bigcup}_{\pq{\neq}0}\gt_{\pq},$ which are nilpotent,
and by the semisimple and nilpotent components of the elements of 
$\gt_{0}.$ Moreover, $\gl_{\K}(\sfV)$ contains a characteristic
element $E_{\sfV}$ and 
\begin{equation*}
[\gl_{\K}(\sfV)]_{0}\,{=}\,\{A\,{\in}\,\gl_{\K}(\sfV)
\,{\mid}\,E_{\sfV}{\circ}\,A\,{=}\,A\,{\circ}\,E_{\sfV}\}.
\end{equation*}
Since the semisimple and nilpotent summands $A_{s}$ and $A_{n}$ 
of an element 
$A$ of $\gt_{0}$ are polynomials of $A,$ 
they also commute with $E_{\sfV}$ and therefore belong to $[\gl_{\K}(\sfV)]_{0}.$ 
\end{proof}

\subsection{$\Z$-graded Levi-Mal\v{c}ev 
and Levi-Chevalley decompositions}
Finite dimensional   $\Z$-graded 
Lie algebras admit $\Z$-graded
Levi-Mal\v{c}ev and Levi-Chevalley
decompositions. % This results was 
The Levi-Mal\v{c}ev decompositions was
stated and proved
in  \cite{CC2017,MN02} for real and complex
finite dimensional graded Lie algebras.
 We provide here
a short proof for general fields of characteristic zero. 
\begin{thm}\label{lmc-dec}
 Every finite dimensional  $\Z$-graded 
 Lie algebra 
 admits a $\Z$-graded 
 Levi-Mal\v{c}ev decomposition.\par
 Let $\sfV$ be a finite dimensional $\Z$-graded $\K$-vector space. Then
 every decomposable $\Z$-graded Lie subalgebra of $\gl_{\K}(\sfV)$
 admits a Levi-Chevalley decomposition in $\sfV.$ 
\end{thm}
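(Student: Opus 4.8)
The plan is to treat the linear (Levi--Chevalley) assertion as the substantial one and to deduce the abstract (Levi--Mal\v{c}ev) statement from it via a faithful graded realisation. Throughout I use that, for a graded subalgebra of $\gl_{\K}(\sfV)$, the grading is the eigenspace decomposition of $\ad E_{\sfV}$: an endomorphism $A$ is homogeneous of degree $\pq$ exactly when $[E_{\sfV},A]=\pq\,A$, so a subalgebra is graded if and only if it is $\ad E_{\sfV}$-invariant; in particular \emph{any} subalgebra containing $E_{\sfV}$ is automatically graded. This is precisely what will let Proposition~\ref{prop8.7} produce \emph{graded} Levi factors.

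For the Levi--Chevalley statement, let $\gt\subseteq\gl_{\K}(\sfV)$ be graded and decomposable. Since $\gt$ is its own decomposable envelope, the Proposition on decomposable envelopes of graded subalgebras forces $\gt_{0}$ to be decomposable in $\sfV$; as $E_{\sfV}$ is central in $[\gl_{\K}(\sfV)]_{0}\supseteq\gt_{0}$, the degree-zero piece $\gt_{0}+\K E_{\sfV}$ is again decomposable, and the same Proposition then shows that $\hg:=\gt+\K E_{\sfV}$ is a graded \emph{decomposable} subalgebra containing $\gt$ as an ideal of codimension $\le 1$. I then apply Proposition~\ref{prop8.7} to $\hg$ with $\td=\K E_{\sfV}$ (commutative, semisimple) to obtain a $\sfV$-reductive Levi factor $\tilde{\lt}$ of $\hg$ with $E_{\sfV}\in\tilde{\lt}$; by the remark above $\tilde{\lt}$ is graded. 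Setting $\lt_{\sfV}:=\tilde{\lt}\cap\gt$ gives a graded subalgebra which is an ideal of the reductive $\tilde{\lt}$, hence itself $\sfV$-reductive, and which meets $\nt_{\sfV}$ trivially because a reductive subalgebra carries no nonzero nilpotent ideal.

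The one point needing a real argument is that $\lt_{\sfV}$ and $\nt_{\sfV}$ \emph{span} $\gt$; equivalently that the maximal nilpotent ideal $\tilde{\nt}_{\sfV}$ of $\hg$ already lies in $\gt$ (then $\tilde{\nt}_{\sfV}=\nt_{\sfV}$ and a dimension count off $\hg=\tilde{\lt}\oplus\tilde{\nt}_{\sfV}$ closes the case). Suppose not: since $\tilde{\nt}_{\sfV}$ is $\ad E_{\sfV}$-invariant, projecting onto the degree-zero eigenspace produces an element $x_{0}+c\,E_{\sfV}\in\tilde{\nt}_{\sfV}$ with $x_{0}\in\gt_{0}$, $c\neq 0$, nilpotent on $\sfV$. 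Here $[E_{\sfV},x_{0}]=0$, so $E_{\sfV}$ commutes with the nilpotent $m:=x_{0}+c\,E_{\sfV}$; thus the Jordan--Chevalley decomposition of $-x_{0}=c\,E_{\sfV}-m$ has semisimple part $c\,E_{\sfV}$. As $\gt$ is decomposable and $x_{0}\in\gt$, this semisimple part lies in $\gt$, forcing $E_{\sfV}\in\gt$ and contradicting $c\neq 0$. This step is the heart of the linear half, and it is exactly where decomposability is used.

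For the Levi--Mal\v{c}ev statement I would first realise $\gt$: by the graded Ado theorem (Theorem~\ref{ado}) there is a faithful graded $\rhoup\colon\gt\hookrightarrow\gl_{\K}(\sfV)$ with $\rhoup(\nt)$ nilpotent, and I pass to the decomposable envelope $\dgt$ of $\rhoup(\gt)$, which is graded with $\rhoup(\gt)$ as a graded ideal. Applying the Levi--Chevalley half to $\dgt$ yields a graded $\sfV$-reductive Levi factor, whose derived subalgebra $\ses:=[\Li,\Li]$ is a graded \emph{semisimple} Levi factor of $\dgt$, and the projection $p\colon\dgt\to\ses$ along the graded radical $\rad\dgt$ is graded. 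The true obstacle of this half is \textbf{descent}: I must show $p$ maps $\rhoup(\gt)$ onto $\ses$ and admits a graded section. Surjectivity holds because passing to the decomposable envelope enlarges only the radical (by commuting semisimple components), so $\dgt/\rad\dgt\cong\rhoup(\gt)/\rad\rhoup(\gt)$; a graded section then exists because the grading derivation is semisimple and the relevant splitting space is governed by the degree-zero part of $H^{1}(\ses,\rad\rhoup(\gt))$, which vanishes by the graded form of Whitehead's lemma — equivalently, by one further Mostow-type averaging as in Proposition~\ref{prop8.7}. Pulling this section back through $\rhoup$ produces the desired graded Levi factor of $\gt$.
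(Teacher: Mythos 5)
Your proof of the Levi--Chevalley half is correct, and it takes a genuinely different route from the paper's. The paper proves that statement in three moves: first the special case $E_{\sfV}\in\gt$ (Proposition~\ref{prop8.7} with $\td=\langle E_{\sfV}\rangle$), then the Levi--Mal\v{c}ev statement, and only then the general Levi--Chevalley decomposition, which it extracts from a graded Levi--Mal\v{c}ev decomposition $\gt=\ses\oplus\rt$ by splitting the degree-zero part of the radical as $\rt_0=\td_0\oplus(\nt_{\sfV}\cap\rt_0)$ (Bourbaki's corollary on solvable decomposable algebras) and setting $\lt_{\sfV}=\ses\oplus\td_0.$ You instead prove the general linear statement directly: adjoin $E_{\sfV}$, check via the proposition on envelopes of graded subalgebras that $\hg=\gt+\K E_{\sfV}$ is still decomposable, apply Proposition~\ref{prop8.7}, and descend by intersecting with $\gt.$ Your descent lemma --- that decomposability of $\gt$ forces the maximal nilpotent ideal $\tilde{\nt}_{\sfV}$ of $\hg$ into $\gt,$ proved by uniqueness of the Jordan--Chevalley decomposition of a degree-zero component $x_0+cE_{\sfV}$ --- is an elementary and self-contained replacement for the paper's appeal to Bourbaki, and it makes the Levi--Chevalley half independent of both Ado's theorem and the Levi--Mal\v{c}ev half.

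The Levi--Mal\v{c}ev half, however, has a genuine gap in its final step. Everything up to the descent is fine (indeed your use of the decomposable envelope $\dgt$, which is decomposable by construction, is more scrupulous than the paper's $\cg=\gt+\langle E_{\sfV}\rangle$, whose decomposability is needed for Proposition~\ref{prop8.7} but never verified there). But the construction of a ``graded section'' is not an argument as written: the existence of a Lie algebra section of $p\colon\rhoup(\gt)\to\ses$ is not governed by $H^{1}(\ses,\rad\rhoup(\gt))$ --- first Whitehead ($H^{1}=0$) controls conjugacy of Levi factors, while existence is the $H^{2}$/Levi-theorem question --- and neither ``the graded form of Whitehead's lemma'' nor ``one further Mostow-type averaging'' is formulated or proved; Proposition~\ref{prop8.7} cannot be cited for this, since it produces $\sfV$-reductive factors of decomposable \emph{linear} algebras containing a given torus, not graded sections of abstract quotient maps. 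Fortunately the whole detour is unnecessary, and the very fact you invoke for surjectivity closes the gap: since the envelope enlarges only the radical, $[\dgt,\dgt]=[\rhoup(\gt),\rhoup(\gt)]\subseteq\rhoup(\gt),$ hence $\ses=[\Li,\Li]\subseteq\rhoup(\gt)$ already; then $\rhoup(\gt)=\ses\oplus\bigl(\rhoup(\gt)\cap\rad\dgt\bigr)=\ses\oplus\rad\rhoup(\gt)$ is the desired graded Levi--Mal\v{c}ev decomposition, the section being the inclusion. This is exactly the paper's device: it takes $\ses:=[\lt_{\sfV},\lt_{\sfV}]$ and observes that it lies in $\gt$ because $\gt$ is an ideal with abelian quotient in the algebra obtained by adjoining $E_{\sfV}.$
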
 
\begin{proof} We begin by proving the last statement under an additional assumption.
Let $\sfV$ be a finite dimensional $\Z$-graded vector space and 
assume that $\gt$ is a decomposable $\Z$-graded
Lie subalgebra of $\gl_{\K}(V),$ 
containing the characteristic element $E_{\sfV}$ 
 of $\gl_{\K}(\sfV).$
Since $E_{\sfV}$ is semisimple, 
by Proposition~\ref{prop8.7}, there is a 
$\sfV$-reductive Levi factor $\lt_{\sfV}$ of $\gt$ containing 
$E_{\sfV}$ and hence $\Z$-graded.   
\par
Consider now any $\Z$-graded Lie algebra $\gt.$ 
By using Theorem~\ref{ado},
we  identify $\gt$ with
a $\Z$-graded Lie subalgebra of $\gl_{\K}(\sfV),$ for a suitable finite dimensional
$\Z$-graded  $\K$-vector space $\sfV.$ Then $\cg{\coloneqq}\gt{+}\langle{E_{\sfV}}\rangle$
is a $\Z$-graded Lie subalgebra of $\gl_{\K}(\sfV)$ which, by the argument above,
has a $\Z$-graded $\sfV$-reductive Levi factor $\lt_{\sfV}.$ Then
$\ses{\coloneqq}[\lt_{\sfV},\lt_{\sfV}]$ is a semisimple 
$\Z$-graded Levi factor of $\gt$.\par
It remains to prove the existence a $\Z$-graded Levi-Chevalley decomposition in $\sfV$
for a decomposable $\Z$-graded Lie subalgebra $\gt$ of $\gl_{\K}(\sfV)$ 
without assuming that $E_{\sfV}\,{\in}\,\gt.$  
We begin by taking a Levi-Mal\v{c}ev decomposition
$\gt\,{=}\,\ses\,{\oplus}\,\rt$ with a $\Z$-graded semisimple Levi factor $\ses.$ Since 
${\sum}_{\pq\neq{0}}\rt_{\pq}\,{\subseteq}\,\nt_{\sfV},$ 
we obtain that $\rt\,{=}\,\nt_{\sfV}{+}\rt_{0}.$ The subalgebra $\rt_{0}$ is solvable and
decomposable in $\sfV.$ Therefore, if $\td_{0}$ is a maximal abelian subalgebra of $\rt_{0}$
consisting of 
semisimple endomorphisms of $\sfV,$ then $\rt_{0}\,{=}\,\td_{0}\,{\oplus}\,(\nt_{\sfV}{\cap}\rt_{0})$
(see e.g. \cite[Ch.VII,\S{5}, Corollary~2]{Bou82}) 
and hence $\lt_{\sfV}\,{=}\,\ses\,{\oplus}\,\td_{0}$ is a $\sfV$-reductive Levi factor of $\gt.$
\end{proof}
We recall (see e.g. \cite[Ch.I,\S{6.4}]{n1998lie}) 
that a Lie algebra $\kt$ is \emph{reductive} 
if its adjoint representation is semisimple.
This is equivalent to the fact that its derived algebra 
$[\kt,\kt]$ is semisimple. 
If $\kt$ is reductive, then
its 
center is a direct sum complement of 
 $[\kt,\kt]$  in $\kt.$\par 
A 
finite dimensional Lie algebra $\gt$ is \emph{decomposable} if 
the semisimple and nilpotent components of its inner derivations are still
inner derivations: this means that 
$\ad_{\gt}(\gt)$
is a $\gt$-decomposable Lie subalgebra of $\gl_{\K}(\gt).$ \par 
A \textit{linear realisation} of $\gt$ is 
a faithful finite dimensional linear representation
$\rhoup\,{:}\,\gt\,{\to}\,\gl_{\K}(\sfV)$ such that $\rhoup(X)$ is nilpotent on $\sfV$ for each
$X$ in the maximal nilpotent ideal $\nt$ of $\gt.$ If $\gt$ is decomposable, by using its
linear realisation, we obtain a direct sum decomposition 
\begin{equation}
 \gt\,{=}\,\lt\,{\oplus}\,\nt,
\end{equation}
where $\lt$ is a reductive Lie algebra of $\gt.$ We call 
\emph{reductive Levi factors} of $\gt$
the reductive Lie algebras which are
complements of $\nt$ in $\gt.$  The elementary automorphisms
act transitively on the set of reductive Levi factors of $\gt.$
\par 
From Theorems~\ref{ado} and \ref{lmc-dec}
we obtain 
\begin{thm}
A decomposable finite dimensional $\Z$-graded Lie algebra $\gt$
contains $\Z$-graded reductive factors.\qed
\end{thm}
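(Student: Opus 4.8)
The plan is to deduce the statement from the linear Levi--Chevalley decomposition of Theorem~\ref{lmc-dec} after modelling $\gt$ faithfully, gradedly and \emph{decomposably} inside some $\gl_{\K}(\sfV)$. Suppose for a moment that we are given a faithful $\Z$-graded realisation $\rhoup\colon\gt\hookrightarrow\gl_{\K}(\sfV)$ whose image $\rhoup(\gt)$ is decomposable in the finite dimensional $\Z$-graded space $\sfV$. Then Theorem~\ref{lmc-dec} provides a $\Z$-graded Levi--Chevalley decomposition $\rhoup(\gt)=\lt_{\sfV}\oplus\nt_{\sfV}$, with $\lt_{\sfV}$ a $\Z$-graded $\sfV$-reductive Levi factor. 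Pulling back is routine: since $\rhoup$ is a graded isomorphism onto its image, $\lt\coloneqq\rhoup^{-1}(\lt_{\sfV})$ is $\Z$-graded and reductive; and the nilpotent pieces match, because $\rhoup(\nt)\subseteq\nt_{\sfV}$ by the realisation property, while $\rhoup^{-1}(\nt_{\sfV})$ is an ideal of $\gt$ isomorphic to a Lie algebra of nilpotent endomorphisms, hence nilpotent and contained in $\nt$. Thus $\nt_{\sfV}=\rhoup(\nt)$ and $\gt=\lt\oplus\nt$, exhibiting $\lt$ as a $\Z$-graded reductive factor.

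Everything therefore rests on producing a faithful graded realisation with decomposable image, and this is the point I expect to be delicate: an arbitrary faithful graded realisation of a decomposable $\gt$ need \emph{not} have decomposable image, so the hypothesis has to be exploited through its definition, namely that $\ad_{\gt}(\gt)$ is decomposable in $\gl_{\K}(\gt)$. If the centre $\zt$ of $\gt$ is trivial, the adjoint representation itself does the job: $\ad_{\gt}\colon\gt\hookrightarrow\gl_{\K}(\gt)$ is a faithful $\Z$-graded realisation, its values on $\nt$ are nilpotent since $\nt$ consists of $\ad_{\gt}$-nilpotent elements, and its image is decomposable in $\gt$ by the very hypothesis. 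Feeding $\ad_{\gt}$ into the first paragraph settles this case.

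In general $\ad_{\gt}$ fails to be faithful on the graded central ideal $\zt\subseteq\nt$, and I would repair this by passing to $\rhoup=\ad_{\gt}\oplus\nu$, where $\nu$ is a $\Z$-graded representation — available through Theorem~\ref{ado} — that is faithful on $\zt$ and carries $\nt$ to nilpotent endomorphisms. The representation must be assembled so that a maximal torus of the radical is still represented by semisimple endomorphisms while $\nt$ is represented by nilpotent ones; by the structure theory of decomposable linear Lie algebras this is precisely what renders $\rhoup(\gt)$ decomposable in $\sfV$. Securing this compatibility — reconciling the $\ad$-theoretic notion of decomposability with a genuinely faithful model, and in particular keeping the torus semisimple after adjoining the central summand — is the step I expect to demand the most care, and I would carry it out following \cite[Ch.VII,\S5]{Bou82}. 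Once the decomposable graded realisation is in hand, the argument of the first paragraph completes the proof.
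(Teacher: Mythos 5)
Your strategy coincides with the paper's own: there the theorem is stated as an immediate corollary of Theorem~\ref{ado} and Theorem~\ref{lmc-dec}, i.e.\ exactly the pullback of your first paragraph applied to a graded realisation. You have also correctly identified the subtlety that this one-line derivation sweeps under the rug: decomposability is defined through $\ad_{\gt}(\gt)\subseteq\gl_{\K}(\gt)$, and it does \emph{not} transfer to the image of an arbitrary realisation. (For instance, for $\gt=\langle h,x,y\rangle$ with $[h,x]=x,$ $[h,y]=y,$ $[x,y]=0,$ trivially graded, the faithful representation on $\K^{3}\oplus\K^{2}$ given by $x\mapsto E_{13}\oplus 0,$ $y\mapsto E_{23}\oplus 0,$ $h\mapsto \diag(1,1,0)\oplus\left(\begin{smallmatrix}0&1\\0&0\end{smallmatrix}\right)$ is a realisation of the decomposable algebra $\gt,$ yet the semisimple part of the image of $h$ does not lie in the image.) So exploiting the hypothesis through $\ad_{\gt}$ is the right instinct, and your centreless case is complete and correct.

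The genuine gap is your last paragraph. The representation $\nu$ you need is not merely ``graded, faithful on the centre, nilpotent on $\nt$'' (which Theorem~\ref{ado} does supply): it must in addition send a torus of the radical to semisimple endomorphisms, which is exactly the property that can fail, and you give no construction; producing such a $\nu$ essentially presupposes the decomposition $\gt=\lt\oplus\nt$ being proved, together with a refined Ado-type theorem. The repair is to drop faithfulness altogether, since Theorem~\ref{lmc-dec} nowhere requires it. Apply it to $\ad_{\gt}(\gt)\subseteq\gl_{\K}(\gt),$ which is a graded subalgebra, decomposable by hypothesis: this gives $\ad_{\gt}(\gt)=L\oplus N$ with $L$ a graded $\gt$-reductive Levi factor and $N=\ad_{\gt}(\nt),$ because $\nt$ consists of the $\ad_{\gt}$-nilpotent elements of $\rt.$ Put $\lt:=\ad_{\gt}^{-1}(L),$ a graded subalgebra containing the centre $\zt$ of $\gt.$ Writing $\ad_{\gt}(X)=\lambda+\ad_{\gt}(Y)$ with $\lambda\in L,$ $Y\in\nt,$ one gets $X-Y\in\lt,$ so $\gt=\lt+\nt$; and $\lt\cap\nt=\zt$ since $L\cap N=\{0\}.$ Moreover $\ad_{\gt}(\lt)=L,$ so $\gt$ is a semisimple $\lt$-module and hence so is its submodule $\lt$: thus $\lt$ is reductive. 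Finally write $\lt$ as the direct sum of the semisimple ideal $[\lt,\lt]$ and of its centre (both graded), choose inside the latter a graded complement $\td$ of $\zt,$ and check that $[\lt,\lt]\oplus\td$ is a graded reductive complement of $\nt$ in $\gt.$ This closes your argument using only results the paper states, and shows in passing that Theorem~\ref{ado} is not needed here at all.
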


\begin{lem} \label{lem8.2}
Let 
$\gt\,{=}\,{\sum}_{\pq\in\Z}\gt_{\pq}$  
be a characteristic
finite dimensional $\Z$-graded Lie algebra.  
Then 
the Cartan subalgebras of $\gt_{0}$ 
are Cartan subalgebras of~$\gt.$   
\end{lem}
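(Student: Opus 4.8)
The plan is to rely on the standard characterization, valid over a field of characteristic zero, that a Cartan subalgebra of $\gt$ is a nilpotent subalgebra $\hg$ which equals its own normalizer $\N_{\gt}(\hg)=\{X\in\gt\mid [X,\hg]\subseteq\hg\}$. So I would fix a Cartan subalgebra $\hg$ of $\gt_0$; being a Cartan subalgebra of $\gt_0$ it is already nilpotent, so the whole task reduces to proving that it is self-normalizing in all of $\gt$, that is, $\N_{\gt}(\hg)=\hg$.

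First I would show that the characteristic element $E$ lies in $\hg$. Since $E$ is homogeneous of degree $0$, the characteristic relation gives $[E,X_0]=D_E(X_0)=0$ for every $X_0\in\gt_0$, so $E$ is central in $\gt_0$. In particular $[E,\hg]=\{0\}\subseteq\hg$, whence $E\in\N_{\gt_0}(\hg)=\hg$ because $\hg$ is self-normalizing in $\gt_0$. Thus $E\in\hg$, and this is the only place where the hypothesis that $\gt$ is characteristic enters.

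The crucial step is then to confine the normalizer to degree zero, exploiting that $\ad_\gt(E)$ acts on $\gt_\pq$ as multiplication by $\pq$. Given $X={\sum}_\pq X_\pq\in\N_\gt(\hg)$, the inclusion $E\in\hg$ forces $[X,E]\in\hg\subseteq\gt_0$; but $[X,E]=-{\sum}_\pq \pq\,X_\pq$ has homogeneous component $-\pq X_\pq$ in degree $\pq$, so the requirement $[X,E]\in\gt_0$ annihilates every nonzero-degree term and yields $X_\pq=0$ for all $\pq\neq0$. Hence $X=X_0\in\gt_0$, giving $\N_\gt(\hg)\subseteq\gt_0$.

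Finally, for $X\in\N_\gt(\hg)\subseteq\gt_0$ the condition $[X,\hg]\subseteq\hg$ is precisely the statement $X\in\N_{\gt_0}(\hg)=\hg$, so $\N_\gt(\hg)\subseteq\hg$; the opposite inclusion is trivial. Therefore $\N_\gt(\hg)=\hg$, and combined with the nilpotency of $\hg$ this makes $\hg$ a Cartan subalgebra of $\gt$. I do not expect a genuine obstacle here: everything rests on the two elementary observations that $E$ is central in $\gt_0$ (so it belongs to every Cartan subalgebra of $\gt_0$) and that $\ad E$ detects the grading, which together pin the normalizer down to degree $0$.
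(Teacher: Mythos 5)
Your proof is correct and follows essentially the same route as the paper: both establish that the characteristic element $E$ is central in $\gt_{0}$ and hence lies in every Cartan subalgebra $\hg$ of $\gt_{0}$, then use the bracket $[E,X]=\sum_{\pq}\pq\,X_{\pq}\in\hg\subseteq\gt_{0}$ to force any element of the normalizer $\N_{\gt}(\hg)$ into $\gt_{0}$, where self-normalization of $\hg$ in $\gt_{0}$ concludes. Your write-up merely makes explicit two points the paper leaves implicit, namely the nilpotent-and-self-normalizing characterization of Cartan subalgebras and the reason the center of $\gt_{0}$ sits inside $\hg$.
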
 
\begin{proof} Every Cartan subalgebra $\hg$ of $\gt_{0}$ 
contains the characteristic element
$E,$ because it belongs to
 the center of $\gt_{0}.$ Let $\hg$ be a Cartan subalgebra of 
$\gt_{0}.$ If  
 $X\,{=}\,{\sum}_{\pq\in\Z}X_{\pq},$ 
 with $X_{\pq}\,{\in}\,\gt_{\pq},$ is in
the normaliser of $\hg$ in $\gt,$ then the condition  
\begin{equation*}
 [E,X]={\sum}_{\pq\in\Z}\pq\cdot{X}_{\pq}\in\hg\subseteq
 \gt_{0}
\end{equation*}
implies that $X$ belongs to 
$\gt_{0}$ and therefore to $\hg,$ 
which is its own normaliser
in~$\gt_{0}.$ Thus $\hg$ is also its own 
normaliser in $\gt$ and thus  is Cartan 
in~$\gt.$ 
\end{proof}
 In a $\Z$-graded finite dimensional Lie algebra $\gt{=}{\sum}_{\pq\in\Z}\gt_{\pq},$
 when $\pq{+}\qq{\neq}0$
the subspaces $\gt_{\pq}$ and $\gt_{\qq}$ are
orthogonal for the Killing form $\sfk_{\gt}.$ 
 Since the restriction of $\sfk_{\gt}$ to a semisimple Levi factor of $\gt$ is nondegenerate, 
Theorem~\ref{lmc-dec} yields
\begin{prop}
 Let $\gt$ be a $\Z$-graded finite dimensional Lie algebra. Then: 
\begin{enumerate}
 \item if $\mt\,{\subseteq}\,\rt,$ then ${\sum}_{\pq{\neq}0}\gt_{\pq}\subseteq\nt$; 
 \item if $\mt\,{\cap}\,\rt\,{=}\,\{0\},$ then $\dim_{\K}(\gt_{\pq})=\dim_{\K}(\gt_{-\pq})$
 for all $\pq\,{\in}\,\Z.$ \qed
\end{enumerate}
\end{prop}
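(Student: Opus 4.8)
The plan is to reduce both assertions to the $\Z$-graded Levi--Mal\v{c}ev decomposition of Theorem~\ref{lmc-dec}. Write $\gt\,{=}\,\ses\,{\oplus}\,\rt$ with $\ses\,{=}\,{\sum}_{\pq\in\Z}\ses_{\pq}$ a $\Z$-graded semisimple Levi factor and $\rt$ the graded radical, and recall from \eqref{8.3} that $\rt_{\pq}\,{=}\,\nt_{\pq}$ for all $\pq\,{\neq}\,0$. The one preliminary fact I would establish is the symmetry $\dim_{\K}(\ses_{\pq})\,{=}\,\dim_{\K}(\ses_{-\pq})$. Indeed, the homogeneous components $\ses_{\pq},\ses_{\qq}$ are $\sfk_{\gt}$-orthogonal whenever $\pq{+}\qq\,{\neq}\,0$, so in a homogeneous basis the Gram matrix of $\sfk_{\gt}|_{\ses}$ is block-antidiagonal, pairing $\ses_{\pq}$ solely against $\ses_{-\pq}$; since this restriction is nondegenerate, each pairing $\ses_{\pq}\,{\times}\,\ses_{-\pq}\,{\to}\,\K$ is perfect, which forces the equality of dimensions.

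For (1) I would show that $\mt\,{\subseteq}\,\rt$ confines $\ses$ to degree $0$. For $\pq\,{<}\,0$ we have $\ses_{\pq}\,{\subseteq}\,\gt_{\pq}\,{\subseteq}\,\mt\,{\subseteq}\,\rt$, whereas $\ses\,{\cap}\,\rt\,{=}\,\{0\}$ by directness of the decomposition; hence $\ses_{\pq}\,{=}\,0$ for every $\pq\,{<}\,0$. The symmetry of the first paragraph then forces $\ses_{\pq}\,{=}\,0$ for every $\pq\,{>}\,0$ as well, so $\ses\,{=}\,\ses_{0}\,{\subseteq}\,\gt_{0}$. Consequently the decomposition $\gt_{\pq}\,{=}\,\ses_{\pq}\,{\oplus}\,\rt_{\pq}$ collapses to $\gt_{\pq}\,{=}\,\rt_{\pq}\,{=}\,\nt_{\pq}$ for each $\pq\,{\neq}\,0$, and therefore ${\sum}_{\pq\neq0}\gt_{\pq}\,{\subseteq}\,\nt$, as claimed.

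For (2) the hypothesis $\mt\,{\cap}\,\rt\,{=}\,\{0\}$ gives, symmetrically, $\rt_{\pq}\,{=}\,\gt_{\pq}\,{\cap}\,\rt\,{=}\,0$ for every $\pq\,{<}\,0$, so that $\gt_{\pq}\,{=}\,\ses_{\pq}$ in negative degrees and hence $\dim_{\K}(\gt_{-\pq})\,{=}\,\dim_{\K}(\ses_{-\pq})\,{=}\,\dim_{\K}(\ses_{\pq})$ for $\pq\,{>}\,0$. Via $\gt_{\pq}\,{=}\,\ses_{\pq}\,{\oplus}\,\rt_{\pq}$, the target equality $\dim_{\K}(\gt_{\pq})\,{=}\,\dim_{\K}(\gt_{-\pq})$ thus amounts to the vanishing $\rt_{\pq}\,{=}\,\nt_{\pq}\,{=}\,0$ in positive degrees too. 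This reduction is where I expect the real difficulty to sit: the vanishing of $\rt$ in negative degrees does not automatically propagate upward, so the hypothesis must be exploited again through the Killing form. The route I would try is to observe that $\ker(\sfk_{\gt})$ is a solvable ideal, hence contained in $\rt$; combined with the block-antidiagonal shape of $\sfk_{\gt}$ on all of $\gt$ this yields $\dim_{\K}(\gt_{\pq})\,{-}\,\dim_{\K}(\gt_{-\pq})\,{=}\,\dim_{\K}(\gt_{\pq}{\cap}\ker\sfk_{\gt})\,{-}\,\dim_{\K}(\gt_{-\pq}{\cap}\ker\sfk_{\gt})$, in which the second term vanishes because $\gt_{-\pq}{\cap}\ker\sfk_{\gt}\,{\subseteq}\,\rt_{-\pq}\,{=}\,0$ for $\pq\,{>}\,0$. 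The crux is then to rule out nonzero homogeneous elements of $\rt$ of positive degree lying in $\ker\sfk_{\gt}$---equivalently, to force the graded radical into degree $0$---and this is the step I would expect to require the most care.
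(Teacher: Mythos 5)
Your part (1) is correct and is exactly the argument the paper intends: the paper's entire proof consists of the two sentences preceding the statement (Killing-form orthogonality of $\gt_{\pq}$ and $\gt_{\qq}$ for $\pq{+}\qq\,{\neq}\,0,$ nondegeneracy of $\sfk_{\gt}$ on a Levi factor, and the graded Levi--Mal\v{c}ev decomposition of Theorem~\ref{lmc-dec}), and your preliminary symmetry $\dim_{\K}(\ses_{\pq})\,{=}\,\dim_{\K}(\ses_{-\pq})$ together with the confinement of $\ses$ to degree $0$ and the identification $\gt_{\pq}\,{=}\,\rt_{\pq}\,{=}\,\nt_{\pq}$ for $\pq\,{\neq}\,0$ from \eqref{8.3} is precisely how those ingredients combine.

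In part (2), however, there is a genuine gap, and it sits exactly where you said you expected trouble: you reduce the claim to the vanishing $\rt_{\pq}\,{=}\,\{0\}$ for $\pq\,{>}\,0$ and never prove it. That step cannot be carried out, because statement (2) is false as written. Consider
\begin{equation*}
\gt=\left\{\begin{pmatrix} A & v\\ 0 & 0\end{pmatrix} : A\,{\in}\,\slt_2(\K),\; v\,{\in}\,\K^2\right\}\subset\gl_3(\K),
\end{equation*}
graded by $\deg(E_{ij})\,{=}\,k_i{-}k_j$ with $(k_1,k_2,k_3)\,{=}\,(1,0,-1),$ so that $\gt_{-1}\,{=}\,\langle E_{21}\rangle,$ $\gt_{0}\,{=}\,\langle E_{11}{-}E_{22}\rangle,$ $\gt_{1}\,{=}\,\langle E_{12},E_{23}\rangle,$ $\gt_{2}\,{=}\,\langle E_{13}\rangle.$ Here $\ses\,{\simeq}\,\slt_2(\K)$ (the upper-left block) is a $\Z$-graded Levi factor, the radical is $\rt\,{=}\,\langle E_{13},E_{23}\rangle,$ and $\mt\,{=}\,\gt_{-1}$ meets $\rt$ trivially; yet $\dim_{\K}(\gt_{1})\,{=}\,2\,{\neq}\,1\,{=}\,\dim_{\K}(\gt_{-1})$ and $\dim_{\K}(\gt_{2})\,{=}\,1\,{\neq}\,0\,{=}\,\dim_{\K}(\gt_{-2}).$ Every ingredient invoked by the paper holds in this example; what those ingredients actually yield --- by your own bookkeeping, since $\dim_{\K}(\gt_{\pq})\,{-}\,\dim_{\K}(\gt_{-\pq})\,{=}\,\dim_{\K}(\gt_{\pq}\,{\cap}\,\ker\sfk_{\gt})\,{\geq}\,0$ --- is only the one-sided estimate $\dim_{\K}(\gt_{\pq})\,{\geq}\,\dim_{\K}(\gt_{-\pq})$ for $\pq\,{>}\,0.$ The vanishing of $\rt$ in negative degrees does not propagate upward: the whole radical can lie in $\ker\sfk_{\gt}$ with only nonnegative-degree components, as above. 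So the missing step is not a lemma you failed to find; it is an error in the proposition itself, and the paper's own \qed-style proof (the same reduction) has the same hole. Equality is recovered only under the extra hypotheses the paper introduces later: for a quasi-effective $\gt$ of reductive type, Lemma~\ref{lm8.16} gives $\rt_{\pq}\,{=}\,\nt_{\pq}\,{=}\,\{0\}$ for $\pq\,{>}\,0,$ and then your reduction does close the proof; note that the counterexample above is of reductive type but not quasi-effective, so quasi-effectiveness is the hypothesis that cannot be dropped.
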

 
 \section{Effectiveness conditions}\label{sec-effect}
 Let 
 $\gt\,{=}\,{\sum}_{\pq\in\Z}\gt_{\pq}$
 be a $\Z$-graded Lie algebra and set  
 $\gt_{+}\,{=}\,{\sum}_{\pq{\geq}0}\gt_{\pq},$
 $\mt{=}{\sum}_{\pq<0}\gt_{\pq}.$ 
\begin{defn}[effectiveness conditions] We say that $\gt$ is 
\begin{itemize}
 \item \emph{effective} if 
\begin{equation}
\label{eq8.17}
  \{X\in\gt_{+}\mid [X,\gt_{-1}]=\{0\}\}=\{0\}, 
\end{equation}
\item \emph{quasi-effective} if the following two conditions are fulfilled:
\begin{equation}
 \label{eq8.18}
   \{X\in\gt_{+}\mid [X,\mt]\subseteq\mt\}\subseteq\gt_{0},
  \;\; \{X\in\gt_{0}\mid [X,\mt]=\{0\}\}=\{0\}, 
\end{equation}
\item \emph{almost effective} if 
\begin{equation}
  \label{eq8.19}
  \{X\in\gt_{+}\mid [X,\mt]=\{0\}\}=\{0\}.
\end{equation}
\end{itemize}
 \end{defn}
 We clearly have \eqref{eq8.17}$\Rightarrow$\eqref{eq8.18}$\Rightarrow$\eqref{eq8.19}
 and the three notions are equivalent when $\mt$ is \textit{fundamental}, i.e. generated
 by $\gt_{-1}.$ \par 
 \begin{ntz} \label{ntz-a.e.} 
Let us indicate by $\Pp_{0}(\mt,\gt_{0})$ the collection of all finite dimensional
 $\Z$-graded \textit{almost effective} prolongations of type $\gt_{0}$ of $\mt,$ where
 $\gt_{0}$ is a subalgebra of $\Der_{\!0}(\mt)$ and, for $D\,{\in}\,\gt_{0}$ and $X\in\mt$ we have
 $[D,X]\,{=}\,D(X).$ We denote by $\mt\,{\oplus}\,\gt_{0}$ the object of $\Pp_{0}(\mt,\gt_{0})$
 which is the semidirect product of $\mt$ and $\gt_{0}.$ 
\end{ntz}

\begin{lem} An almost effective $\gt$ contains 
at most one characteristic element.
If $\gt$ is 
almost effective,  
then the restriction to $\gt$ of the adjoint representation of 
its characteristic prolongation %$\cg$
is faithful.\qed
\end{lem}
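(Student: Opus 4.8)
The plan is to reduce both assertions to a single elementary observation — that the degree-zero part sits inside $\gt_{+}$ — combined with the almost effectiveness condition \eqref{eq8.19} and the identification of the kernel furnished by Lemma~\ref{lm8.1}. In each case the object to be killed is an element of $\gt_{0}$ that commutes with $\mt$, and since $\gt_{0}\subseteq\gt_{+}$ such an element is immediately subject to \eqref{eq8.19}.

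First I would treat uniqueness of the characteristic element. Suppose $E$ and $E'$ are both characteristic. By definition each is homogeneous of degree $0$, so $E,E'\in\gt_{0}$, and for every $X\in\gt$ we have $[E,X]=D_{E}(X)=[E',X]$. Hence $E-E'\in\gt_{0}$ satisfies $[E-E',X]=0$ for all $X\in\gt$, in particular $[E-E',\mt]=\{0\}$. As $\gt_{0}\subseteq\gt_{+}$, the difference $E-E'$ lies in $\gt_{+}$, so \eqref{eq8.19} forces $E-E'=0$, that is $E=E'$.

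Next I would deduce faithfulness. By Lemma~\ref{lm8.1}, the kernel of the restriction to $\gt$ of the adjoint representation of the characteristic prolongation $\cg$ is
\begin{equation*}
 \ct_{0}=\{A\in\gt_{0}\mid [A,\gt]=\{0\}\}.
\end{equation*}
It therefore suffices to show that almost effectiveness forces $\ct_{0}=\{0\}$. If $A\in\ct_{0}$, then $A\in\gt_{0}\subseteq\gt_{+}$ and, since $\mt\subseteq\gt$, we have $[A,\mt]=\{0\}$; thus \eqref{eq8.19} yields $A=0$. Hence $\ct_{0}=\{0\}$ and the representation is faithful.

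I do not expect any genuine obstacle: the substance is carried by Lemma~\ref{lm8.1} and by the trivial but decisive remark that $\gt_{0}\subseteq\gt_{+}$, so that the hypothesis \eqref{eq8.19}, though phrased for $\gt_{+}$, applies to the elements of $\gt_{0}$ at hand. The only point worth watching is that $E-E'$ (respectively $A$) need only be tested against $\mt$ and not against all of $\gt$, which is exactly how \eqref{eq8.19} is stated.
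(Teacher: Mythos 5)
Your proof is correct and is exactly the argument the paper leaves implicit (the lemma is stated with an immediate \qed): both claims reduce to noting that the offending element lies in $\gt_{0}\subseteq\gt_{+}$, commutes with $\mt$, and is therefore killed by the almost effectiveness condition \eqref{eq8.19}, with the kernel identification $\ct_{0}$ coming from Lemma~\ref{lm8.1}. No gaps.
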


\begin{lem}\label{lm8.7}
 Let $\gt$ be an almost effective 
 $\Z$-graded Lie algebra. 
 Then
a nonnegative degree homogeneous derivation of 
 $\gt$  
  is zero
 if and only if its restriction to $\mt$ is zero. 
\end{lem}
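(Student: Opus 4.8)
The plan is to prove the nontrivial implication — that vanishing on $\mt$ already forces a nonnegative degree homogeneous derivation to vanish everywhere — by strong induction on the degree of the homogeneous part on which it acts. The reverse implication is immediate. So I would start from a homogeneous derivation $D$ of some degree $d\,{\geq}\,0$ with $D|_{\mt}\,{=}\,0$, and aim to show $D(\gt_{\pq})\,{=}\,\{0\}$ for every $\pq\,{\geq}\,0$; together with the hypothesis this yields $D\,{=}\,0$. The induction is set up on $\pq\,{\in}\,\{0,1,2,\dots\}$, the inductive hypothesis at stage $\pq$ being that $D$ annihilates $\gt_{r}$ for all $r\,{<}\,\pq$ (which already holds for $r\,{<}\,0$ by assumption).

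The key computation, which I would carry out first, is as follows. Fix $\pq\,{\geq}\,0$, take $X\,{\in}\,\gt_{\pq}$, and let $Y\,{\in}\,\gt_{-\qq}$ be an arbitrary homogeneous element of $\mt$, so $\qq\,{\geq}\,1$. Since $D(Y)\,{=}\,0$, the derivation identity collapses to
\begin{equation*}
 [D(X),Y]=D([X,Y])-[X,D(Y)]=D([X,Y]).
\end{equation*}
Now $[X,Y]\,{\in}\,\gt_{\pq-\qq}$ has degree $\pq-\qq\,{<}\,\pq$: if $\pq-\qq\,{<}\,0$ then $[X,Y]\,{\in}\,\mt$ and $D([X,Y])\,{=}\,0$ by hypothesis, while if $0\,{\leq}\,\pq-\qq\,{<}\,\pq$ then $D([X,Y])\,{=}\,0$ by the inductive hypothesis. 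In either case $[D(X),Y]\,{=}\,0$, and as $Y$ ranges over all homogeneous elements of $\mt$ this gives $[D(X),\mt]\,{=}\,\{0\}$.

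To close the induction I would invoke almost effectiveness. Because $d\,{\geq}\,0$, the element $D(X)$ lies in $\gt_{\pq+d}\,{\subseteq}\,\gt_{+}$; since it centralises $\mt$, the defining condition \eqref{eq8.19} forces $D(X)\,{=}\,0$. This covers the base case $\pq\,{=}\,0$ (where $[X,Y]\,{\in}\,\mt$ automatically) and the inductive step by the same argument, completing the induction. I do not expect a serious obstacle here: the only two points requiring attention are that the hypothesis $d\,{\geq}\,0$ is exactly what keeps $D(X)$ inside $\gt_{+}$ so that \eqref{eq8.19} is applicable, and that bracketing against $\mt$ strictly lowers the degree, which is precisely what makes the downward recursion well-founded.
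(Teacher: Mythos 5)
Your proof is correct and follows essentially the same route as the paper's: an induction on the degree $\pq\,{\geq}\,0$, using the derivation identity together with the inductive hypothesis to show $[D(X),\mt]\,{=}\,\{0\}$, then concluding $D(X)\,{=}\,0$ from almost effectiveness since $D(X)\,{\in}\,\gt_{+}$ when the derivation has nonnegative degree. The only cosmetic difference is that you spell out the case split (bracket landing in $\mt$ versus in a lower nonnegative degree) which the paper compresses into one line.
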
 
\begin{proof}
 Let $D\,{\in}\,\Der_{\!\qq}(\gt)$ be a homogeneous derivation of degree 
 $\qq{\geq}0$ 
 which vanishes on $\mt.$ We prove by recurrence that 
 $D(\gt_{\pq})\,{=}\,\{0\}$ for all $\pq\,{\in}\,\Z.$  By assumption
this is true  for $\pq{<}0.$ 
Assume that $\pq\,{\geq}\,0$ and $D(\gt_{\sq})\,{=}\,0$
for $\sq{<}\pq.$ 
 If $X_{\pq}\in\gt_{\pq}$ and $Y_{\rq}\,{\in}\,\gt_{\rq}$ 
 with $\rq{<}0,$ then 
\begin{equation*}
[D(X_{\pq}),Y_{\rq}]=D([X_{\pq},Y_{\rq}])-[X_{\pq},D(Y_{\rq})]=0,
\end{equation*}
because the first summand in the right hand side is zero,
being $[X_{\pq},Y_{\rq}]{\in}\gt_{\pq{+}\rq}$ 
and $\pq{+}\rq{<}\pq$; the second
is zero because $D(Y_{\rq})\,{=}\,0$ since $\rq{<}0.$ 
Then $D(X_{\pq})$ is an element of $\gt_{\pq+\qq}$ 
with $[D(X_{\pq}),\mt]\,{=}\,\{0\}$ and hence is $0$
by the almost effectiveness assumption, because $\pq{+}\qq{\geq}0.$
This completes the proof.
\end{proof}
\begin{lem}\label{lemma8.11}
 Let $\gt$ be a finite dimensional almost effective $\Z$-graded Lie algebra.
 Then a degree $0$-homogeneous derivation 
of $\gt$  
 is nilpotent 
 if and only if its restriction to $\mt$ is nilpotent. 
\end{lem}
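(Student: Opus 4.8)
The plan is to reduce the statement to the Jordan--Chevalley decomposition of $D$ combined with Lemma~\ref{lm8.7}. One implication is immediate: since $D$ is homogeneous of degree $0$ it preserves each summand $\gt_{\pq}$, hence also the graded subspace $\mt={\sum}_{\pq<0}\gt_{\pq}$; thus if $D$ is nilpotent on $\gt$ then a fortiori its restriction to $\mt$ is nilpotent. The content is therefore in the converse.

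For the converse I would first take the Jordan--Chevalley decomposition $D=D_{s}+D_{n}$ of $D$ in $\gl_{\K}(\gt)$. Because $D_{s}$ and $D_{n}$ are polynomials in $D$ with no constant term, each of them preserves every homogeneous component $\gt_{\pq}$; in particular both are homogeneous of degree $0$ and both leave $\mt$ invariant. At this point I would invoke the standard fact that, over a field of characteristic $0$, the semisimple and nilpotent components of a derivation of a finite dimensional Lie algebra are again derivations, so that $D_{s}$ and $D_{n}$ are themselves degree $0$ homogeneous derivations of $\gt$.

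Next I would compare the Jordan decompositions on $\gt$ and on the $D$-invariant subspace $\mt$. The restriction $D_{s}|_{\mt}$ is semisimple (its minimal polynomial divides a squarefree one, hence is squarefree), $D_{n}|_{\mt}$ is nilpotent, they commute, and their sum is $D|_{\mt}$; by uniqueness of the Jordan--Chevalley decomposition these are precisely the semisimple and nilpotent parts of $D|_{\mt}$. Consequently, if $D|_{\mt}$ is nilpotent its semisimple part is zero, and therefore $D_{s}$ restricts to $0$ on $\mt$.

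Finally, $D_{s}$ is a homogeneous derivation of $\gt$ of degree $0$---in particular of nonnegative degree---whose restriction to $\mt$ vanishes, so Lemma~\ref{lm8.7} forces $D_{s}=0$. Hence $D=D_{n}$ is nilpotent on all of $\gt$, which completes the argument. The only delicate points are the compatibility of the Jordan--Chevalley decomposition with passage to the invariant subspace $\mt$ and the fact that the Jordan components of a derivation are again derivations; once these are granted, the almost effectiveness enters only through Lemma~\ref{lm8.7}, which does the remaining work.
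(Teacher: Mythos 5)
Your proof is correct, but it takes a genuinely different route from the one in the paper. The paper proves the nontrivial implication by a direct recursion on the degree: starting from the identity $[D^{k}(X),Y]={\sum}_{h=0}^{k}c_{k,h}D^{h}[X,D^{k-h}(Y)]$, it shows that if $D^{m}$ annihilates every $\gt_{\qq}$ with $\qq<\pq$, then $[D^{2m}(X_{\pq}),\mt]=\{0\}$ for all $X_{\pq}\in\gt_{\pq}$, so that almost effectiveness gives $D^{2m}(X_{\pq})=0$; finite dimensionality then turns this componentwise nilpotency into nilpotency on $\gt$. You instead pass through the Jordan--Chevalley decomposition $D=D_{s}+D_{n}$: both parts preserve the grading (being polynomials in $D$), both are derivations (the standard characteristic-zero fact), restriction to the $D$-invariant subspace $\mt$ is compatible with taking Jordan parts (a semisimple endomorphism restricts to a semisimple one on an invariant subspace), so nilpotency of $D|_{\mt}$ forces $D_{s}|_{\mt}=0$, and Lemma~\ref{lm8.7} then kills $D_{s}$. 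All of these steps are sound, and it is worth noting that your argument is precisely the technique the paper itself uses shortly afterwards, in the proofs of Lemma~\ref{lemma8.13} and Proposition~\ref{pp8.13}, where the Jordan parts of $\ad_{\gt}(A)$ are compared with their restrictions to $\mt$ and Lemma~\ref{lm8.7} concludes. What each approach buys: yours is shorter and more conceptual, isolating almost effectiveness in the single application of Lemma~\ref{lm8.7}, at the price of invoking Jordan--Chevalley theory and the fact that the Jordan parts of a derivation of a finite dimensional Lie algebra are again derivations; the paper's recursion is longer but elementary and self-contained, using the almost-effectiveness hypothesis directly at each degree. Both arguments use finite dimensionality in an essential way --- yours for the existence of the Jordan--Chevalley decomposition of $D$, the paper's to obtain a uniform nilpotency exponent from the finitely many nonzero homogeneous components.
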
 
\begin{proof} 
 For a derivation $D$ of $\gt$ 
one can easily prove by recurrence that 
\begin{equation*}
 [D^{k}(X),Y]={\sum}_{h=0}^{k}c_{k,h}D^{h}[X,D^{k-h}(Y)],
\end{equation*}
for suitable constants $c_{k,h}.$ Then, if 
$D\,{\in}\,\Der_{0}(\gt)$ is nilpotent on $\mt,$ we can prove recursively that it is nilpotent
 on $\gt.$ Indeed, if $\pq\,{\geq}\,0$ and we know that $D^{m}$ is zero on $\gt_{\qq}$ for
 $\qq{<}\pq,$ then we obtain that, for every $X_{\pq}\in\gt_{\pq}$  
\begin{equation*}
 [D^{2m}(X_{\pq}),Y]={\sum}_{h=0}^{2m}c_{2m,h}D^{h}[X_{\pq},D^{2m-h}(Y)]=0,\;\;\forall Y\in\mt
\end{equation*}
by the inductive assumption, 
 because $[X_{\pq},D^{2m-h}(Y)]\,{\in}\,{\sum}_{\qq<\pq}\gt_{\qq}$ and 
 either $h$ or $2m{-}h$ is ${\geq}m.$ Since  $\gt$ is almost effective,
 this implies that
 $D^{2m}(X_{\pq})\,{=}\,0.$
The proof is complete.
\end{proof}
A consequence of Lemma~\ref{lemma8.11} 
is that for an almost effective $\gt$ 
the Lie subalgebra of the
homogeneous elements of degree $0$ of its maximal nilpotent ideal 
$\nt$ 
only depends on $\mt$ and the structure subalgebra $\gt_{0}.$ 
\begin{ntz}\label{rapp_notation}
Let $\rhoup\,{:}\,\gt_0\,{\to}\,\gl_{\K}(\mt)$ be the linear
representation obtained by restricting the adjoint representation 
to $\mt.$
Likewise, for all $\pq\,{\in}\,\Z,$ 
we denote by
$\rhoup_{\pq}\,{:}\,\gt_0\,{\to}\,\gl_{\K}(\gt_{\pq})$
the restrictions of $\rhoup$ to the homogeneous subspaces
$\gt_{\pq}.$  
\end{ntz}
\begin{thm}\label{thm8.12} Let $\gt_{0}$ be a Lie algebra of degree $0$ derivations
of a finite dimensional $\Z$-graded nilpotent Lie algebra $\mt{=}{\sum}_{\pq<0}\,\gt_{\pq}$.
If $\nt(\gt)$ is the maximal nilpotent ideal of an almost effective prolongation $\gt$
of type $\gt_{0}$ of $\mt,$~then 
\begin{equation}\label{qq8.17} \nt(\gt)\cap\gt_{0}=
 \nt_{0}\coloneqq\{X\in\rt_0
 %\rad(\gt_{0})
 \mid \rhoup(X)
 \;\;\text{is nilpotent on $\mt$}\}.
\end{equation} 
\begin{proof} Let us fix a $\Z$-graded Levi-Mal\v{c}ev decomposition
$\gt\,{=}\,\ses\,{\oplus}\,\rt$ 
of $\gt.$ The subalgebra 
$\ses_{0}{\coloneqq}\ses\,{\cap}\,\gt_{0}$ of its $\Z$-graded semisimple Levi factor
is reductive and decomposes 
into the direct sum 
$\ses_{0}'\,{\oplus}\,\zt_{0}$ of its semisimple ideal 
$\ses'_{0}{=}\,[\ses_{0},\ses_{0}]$ and
its center $\zt_{0}{=}\,\{X\,{\in}\,
\ses_{0}\,{\mid}\,[X,\ses_{0}]=\{0\}\}.$ 
We note that $\ses$ contains a characteristic element 
$E_{\ses}$ and  therefore 
$\zt_{0}$ is contained in a Cartan subalgebra of $\ses$ contained in 
$\ses_{0}.$ The elements of $\zt_{0}$ are $\ad_{\gt}$-semisimple on $\ses$ and
on $\gt.$ Therefore, 
if $A$ is a nonzero element of $\zt_{0}$ and $B\,{\in}\,\rt,$ 
then  
$\ad_{\gt}(A{+}B)$  
is not nilpotent on $\gt.$
Therefore the $\ad_{\gt}$-nilpotent elements of $\rad(\gt_{0})$ 
are contained in $\rt_{0}$ 
and therefore belong to $\nt_{0}.$ The claim of the Theorem follows  from 
Lemma~\ref{lemma8.11},
because, by the assumption that $\gt$ is almost effective, an $X\,{\in}\,\gt_{0}$
is
$\ad_{\gt}$-nilpotent if and only if 
$\rhoup(X)$ is nilpotent on~$\mt.$ 
\end{proof}
\end{thm}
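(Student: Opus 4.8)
The plan is to deduce the identity from Lemma~\ref{lemma8.11}, which, for an almost effective $\gt$, identifies among the degree-$0$ inner derivations $\ad_\gt(X)$ with $X\in\gt_0$ the $\ad_\gt$-nilpotent ones with those whose restriction $\rhoup(X)=\ad_\gt(X)|_\mt$ is nilpotent on $\mt.$ One inclusion is then immediate: if $X\in\nt(\gt)\cap\gt_0,$ then $X$ lies in the solvable radical $\rt,$ so $X\in\rt\cap\gt_0=\rt_0\subseteq\rad(\gt_0),$ while $\ad_\gt(X)$ nilpotent forces $\rhoup(X)$ nilpotent on $\mt;$ hence $X\in\nt_0.$ The content of the theorem is the reverse inclusion, and the subtlety is that $\nt_0$ is defined over the radical $\rad(\gt_0)$ of the \emph{abstract} structure algebra, which in general is strictly larger than $\rt_0=\rt\cap\gt_0;$ so I must show that an $\ad_\gt$-nilpotent element of $\rad(\gt_0)$ cannot leave $\rt_0.$

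First I would fix a $\Z$-graded Levi-Mal\v{c}ev decomposition $\gt=\ses\oplus\rt,$ which exists by Theorem~\ref{lmc-dec}. Being graded, it induces $\gt_0=\ses_0\oplus\rt_0$ with $\ses_0=\ses\cap\gt_0$ reductive and $\rt_0=\rt\cap\gt_0$ a solvable ideal of $\gt_0.$ Writing $\ses_0=[\ses_0,\ses_0]\oplus\zt_0$ with $\zt_0$ the center of $\ses_0,$ I would identify $\rad(\gt_0)=\rt_0\oplus\zt_0,$ the radical of $\gt_0$ being the preimage under $\gt_0\to\gt_0/\rt_0\cong\ses_0$ of the radical $\zt_0$ of $\ses_0.$ Thus the task reduces to ruling out a nonzero $\zt_0$-component in any $\ad_\gt$-nilpotent element of $\rad(\gt_0).$

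The key is that every element of $\zt_0$ is $\ad_\gt$-semisimple. For this I would use that the $\Z$-graded semisimple algebra $\ses$ is characteristic, since all its derivations are inner; its characteristic element $E_\ses$ then lies in $\zt_0.$ Now $\zt_0$ is contained in a Cartan subalgebra of $\ses_0,$ which by Lemma~\ref{lem8.2} is a Cartan subalgebra of $\ses,$ and Cartan subalgebras of a semisimple Lie algebra consist of semisimple elements; hence each $A\in\zt_0$ is $\ad_\ses$-, and so $\ad_\gt$-, semisimple. For $A\in\zt_0\setminus\{0\}$ and $B\in\rt$ I would then pass to the quotient $\gt/\rt\cong\ses$: the action induced there by $\ad_\gt(A+B)$ is $\ad_\ses(A),$ which is semisimple and nonzero because $\ses$ is centerless, hence not nilpotent; therefore $\ad_\gt(A+B)$ is not nilpotent. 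Applying this with $B\in\rt_0,$ any $\ad_\gt$-nilpotent $X=B+A\in\rad(\gt_0)=\rt_0\oplus\zt_0$ must have $A=0,$ i.e. $X\in\rt_0\subseteq\rt,$ so $X\in\nt(\gt).$ Combined with Lemma~\ref{lemma8.11}, which turns the hypothesis ``$\rhoup(X)$ nilpotent'' into ``$\ad_\gt(X)$ nilpotent'', this yields $\nt_0\subseteq\nt(\gt)\cap\gt_0$ and closes the argument.

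The main obstacle is precisely this separation of the reductive center $\zt_0$ from the radical: one must guarantee that the semisimple directions of $\rad(\gt_0)$ invisible to $\rt_0$ never become $\ad_\gt$-nilpotent, and this is what forces the use of the characteristic element of $\ses$ together with the Cartan-subalgebra transfer of Lemma~\ref{lem8.2}.
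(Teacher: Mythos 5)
Your proof is correct and follows essentially the same route as the paper's: fix a $\Z$-graded Levi-Mal\v{c}ev decomposition $\gt=\ses\oplus\rt$, use the characteristic element of $\ses$ together with Lemma~\ref{lem8.2} to place $\zt_{0}$ in a Cartan subalgebra of $\ses$ (hence in $\ad_{\ses}$-semisimple elements), rule out a nonzero $\zt_{0}$-component in any $\ad_{\gt}$-nilpotent element of $\rad(\gt_{0})$, and conclude with Lemma~\ref{lemma8.11}. Your only departure is expository: passing to the quotient $\gt/\rt\simeq\ses$ makes explicit the step the paper compresses into ``the elements of $\zt_{0}$ are $\ad_{\gt}$-semisimple, therefore $\ad_{\gt}(A{+}B)$ is not nilpotent,'' and your splitting $\rad(\gt_{0})=\rt_{0}\oplus\zt_{0}$ spells out what the paper leaves implicit.
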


\begin{lem} \label{lemma8.13} For an almost effective
finite dimensional $\Z$-graded Lie algebra $\gt$ the following are
equivalent: 
\begin{itemize}
 \item[$(i)$] $\gt$ is decomposable;
 \item[$(ii)$] $\ad_{\gt}(\gt_{0})$ is decomposable in $\gt$;
 \item[$(iii)$] $\rhoup(\gt_{0})$ is decomposable in $\mt$.
\end{itemize}
\end{lem}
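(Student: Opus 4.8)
The plan is to establish the two equivalences $(i)\Leftrightarrow(ii)$ and $(ii)\Leftrightarrow(iii)$ separately. The first is essentially a restatement of the description of the decomposable envelope of a $\Z$-graded linear Lie algebra obtained above, applied to the adjoint representation on $\gt$ itself; the second rests on the fact that, for an almost effective $\gt$, restriction to $\mt$ is injective on homogeneous derivations of nonnegative degree, which is Lemma~\ref{lm8.7}, together with the compatibility of the Jordan--Chevalley decomposition with this restriction.

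For $(i)\Leftrightarrow(ii)$ I would first note that, $\gt$ being finite dimensional, its gradation is finite, so $\glc_{\K}(\gt)=\gl_{\K}(\gt)$, and that $\ad_{\gt}(\gt)$ is a $\Z$-graded Lie subalgebra of $\gl_{\K}(\gt)$ whose homogeneous component of degree $\pq$ is $\ad_{\gt}(\gt_{\pq})$; in particular its degree-$0$ part is $\ad_{\gt}(\gt_{0})$. By definition $(i)$ asserts that $\ad_{\gt}(\gt)$ is decomposable in $\gt$. Applying the Proposition describing the decomposable envelope of a $\Z$-graded subalgebra of $\gl_{\K}(\sfV)$ with $\sfV=\gt$, the envelope of $\ad_{\gt}(\gt)$ agrees with $\ad_{\gt}(\gt)$ in every degree $\pq\neq0$ and in degree $0$ equals the decomposable envelope of $\ad_{\gt}(\gt_{0})$ in $\gt$. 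Hence $\ad_{\gt}(\gt)$ is decomposable in $\gt$ if and only if $\ad_{\gt}(\gt_{0})$ is, which is exactly $(i)\Leftrightarrow(ii)$.

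For $(ii)\Leftrightarrow(iii)$ I would set $\mathfrak{a}=\ad_{\gt}(\gt_{0})\subseteq\gl_{\K}(\gt)$ and $\mathfrak{b}=\rhoup(\gt_{0})\subseteq\gl_{\K}(\mt)$, and let $r$ denote restriction to $\mt$. Every degree-$0$ endomorphism of $\gt$ preserves $\mt={\sum}_{\pq<0}\gt_{\pq}$, so $r$ sends degree-$0$ derivations of $\gt$ to derivations of $\mt$, is a homomorphism of Lie algebras, and satisfies $r(\mathfrak{a})=\mathfrak{b}$; by Lemma~\ref{lm8.7} it is injective on $\Der_{0}(\gt)$. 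The decisive point is that $r$ carries Jordan components to Jordan components: for $X\in\gt_{0}$ the operator $\ad_{\gt}(X)$ is a degree-$0$ derivation preserving $\mt$, its semisimple and nilpotent parts are polynomials without constant term in it and hence are again degree-$0$ derivations preserving $\mt$, and their restrictions to $\mt$ are respectively semisimple and nilpotent, commute, and sum to $\rhoup(X)$; by uniqueness of the Jordan--Chevalley decomposition on $\mt$ they equal $\rhoup(X)_{s}$ and $\rhoup(X)_{n}$. Since $r$ is a Lie homomorphism, it therefore maps the generators $\{\ad_{\gt}(X)_{s},\ad_{\gt}(X)_{n}\}$ of the decomposable envelope $\widetilde{\mathfrak{a}}$ of $\mathfrak{a}$ in $\gt$ onto the generators $\{\rhoup(X)_{s},\rhoup(X)_{n}\}$ of the decomposable envelope $\widetilde{\mathfrak{b}}$ of $\mathfrak{b}$ in $\mt$, so $r(\widetilde{\mathfrak{a}})=\widetilde{\mathfrak{b}}$.

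Finally I would combine these. As $\widetilde{\mathfrak{a}}\subseteq\Der_{0}(\gt)$, being generated by degree-$0$ derivations, $r$ is injective on $\widetilde{\mathfrak{a}}$, with $r(\mathfrak{a})=\mathfrak{b}$ and $r(\widetilde{\mathfrak{a}})=\widetilde{\mathfrak{b}}$; injectivity then yields $\widetilde{\mathfrak{a}}=\mathfrak{a}$ if and only if $\widetilde{\mathfrak{b}}=\mathfrak{b}$, that is $(ii)\Leftrightarrow(iii)$. The only genuinely delicate step is this transfer of decomposability along $r$: it works precisely because almost effectiveness forces restriction to $\mt$ to be injective on nonnegative-degree derivations, so that no information about the Jordan components is lost in passing from the action on $\gt$ to the action on $\mt$.
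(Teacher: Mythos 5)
Your proof is correct and takes essentially the same route as the paper's: the equivalence $(i)\Leftrightarrow(ii)$ rests on the nilpotency of the homogeneous inner derivations of nonzero degree (which is precisely what proves the Proposition on decomposable envelopes of $\Z$-graded subalgebras that you invoke), and $(ii)\Leftrightarrow(iii)$ combines Lemma~\ref{lm8.7} with the fact that the Jordan--Chevalley components of $\ad_{\gt}(A)$ restrict on $\mt$ to those of $\rhoup(A)$; the paper runs this second step elementwise, while you package it as $r(\widetilde{\mathfrak{a}})=\widetilde{\mathfrak{b}}$ together with injectivity of $r$, but the content is the same. One small repair: being a polynomial without constant term in $\ad_{\gt}(X)$ shows that the Jordan components are homogeneous of degree $0$ and preserve $\mt$, but it does not make them derivations (a power of a derivation is not a derivation in general); to get $\widetilde{\mathfrak{a}}\subseteq\Der_{0}(\gt)$, which you need before Lemma~\ref{lm8.7} can give injectivity of $r$ on $\widetilde{\mathfrak{a}}$, invoke instead the standard fact that the semisimple and nilpotent components of a derivation of a finite dimensional Lie algebra over a field of characteristic zero are again derivations --- a fact the paper's proof also uses tacitly at the corresponding point.
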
 
\begin{proof} If $\pq\,{\in}\,\Z{\backslash}\{0\}$ and $X_{\pq}\,{\in}\gt_{\pq},$
then 
the inner derivation $\ad_{\gt}(X_{\pq})$ is nilpotent. Hence $\gt$
is decomposable if and only if the semisimple and nilpotent
parts of the $\ad_{\gt}(A),$ with $A\,{\in}\,\gt_{0},$ are still
inner derivations. This shows that $(i){\Leftrightarrow}(ii).$ \par  
Let us prove the equivalence $(ii){\Leftrightarrow}(iii).$ 
If $A\,{\in}\,\gt_{0},$ then $\ad_{\gt}(A)\,{\in}\,\Der_{0}(\gt)$ has a
Jordan-Chevalley decomposition $\ad_{\gt}(A)=D_{s}{+}D_{n}$ with 
$D_{s}$ semisimple and $D_{n}$ nilpotent, both belonging to 
$\K[\ad_{\gt}(A)].$
If  $A\,{=}\,A_{s}{+}A_{n}$ with $A_{s},A_{n}\,{\in}\,\gt_{0},$ 
then, by Lemma\ref{lm8.7},
we have 
$D_{s}\,{=}\,\ad_{\gt}(A_{s})$ and $D_{n}\,{=}\,\ad_{\gt}(A_{n})$ if and only if
$D_{s}|_{\mt}\,{=}\,\rhoup(A_{s})$ and $D_{n}|_{\mt}\,{=}\,\rhoup(A_{n}).$ 
This yields the last equivalence.
\end{proof}
Lemma~\ref{lemma8.13} tells us that for an almost effective $\gt$ 
\textit{being decomposable} is a property of its \textit{type} $\gt_{0}.$
\begin{prop} Let $\mt{=}{\sum}_{{\pq}<0}\gt_{\pq}$ be a finite dimensional
$\Z$-graded nilpotent Lie algebra and $\gt_{0}$ a 
Lie algebra of degree $0$
homogeneous derivations of~$\mt.$ 
Then the
following are equivalent:
\begin{itemize}
\item $\gt_0\,{\oplus}\,\mt$ is decomposable;
\item there is 
a decomposable prolongation of type $\gt_{0}$ of $\mt$;
\item all $\gt$ in $\Pp_{0}(\mt,\gt_{0})$ 
 are
decomposable.\qed
\end{itemize}
\end{prop}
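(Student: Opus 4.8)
The plan is to route all three statements through the single intrinsic condition that $\rhoup(\gt_0)$ be decomposable in $\mt$, where $\rhoup$ is the representation of Notation~\ref{rapp_notation}. First I would record that for every prolongation $\gt$ of type $\gt_0$ of $\mt$ the restriction to $\mt$ of its adjoint representation is exactly $\rhoup$, so that the subalgebra $\rhoup(\gt_0)\subseteq\gl_\K(\mt)$ depends only on the pair $(\mt,\gt_0)$ and not on $\gt$; here $\rhoup$ is the tautological inclusion $\gt_0\hookrightarrow\Der_{\!0}(\mt)\subseteq\gl_\K(\mt)$. In particular the semidirect product $\gt_0\oplus\mt$ lies in $\Pp_0(\mt,\gt_0)$: it is almost effective because $\{A\in\gt_0\mid[A,\mt]=\{0\}\}=\ker\rhoup=\{0\}$. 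By Lemma~\ref{lemma8.13}, for any \emph{almost effective} prolongation of type $\gt_0$ decomposability is equivalent to the decomposability of $\rhoup(\gt_0)$ in $\mt$, a condition that is now seen to be the same for all members of $\Pp_0(\mt,\gt_0)$.

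Granting this, two of the implications are immediate. The decomposability of $\gt_0\oplus\mt$ trivially yields a decomposable prolongation of type $\gt_0$, since $\gt_0\oplus\mt$ is itself such a prolongation. Conversely, since $\gt_0\oplus\mt\in\Pp_0(\mt,\gt_0)$, the decomposability of every member of $\Pp_0(\mt,\gt_0)$ forces that of $\gt_0\oplus\mt$. Thus it remains to pass from the mere existence of a decomposable prolongation to the decomposability of all of $\Pp_0(\mt,\gt_0)$.

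This last step is the one I expect to be the main obstacle, because a prolongation furnished by the second condition need not be almost effective, so Lemma~\ref{lemma8.13} cannot be invoked for it directly; instead I would descend to $\mt$ by hand. Let $\gt$ be a finite dimensional decomposable prolongation of type $\gt_0$ and fix $A\in\gt_0$. The operator $\ad_\gt(A)$ preserves the $\Z$-gradation of $\gt$, so its semisimple and nilpotent parts, being polynomials with no constant term in $\ad_\gt(A)$, are again homogeneous of degree $0$; by decomposability they are inner, say of the form $\ad_\gt(B)$ and $\ad_\gt(C)$ with $B,C\in\gt$. Uniqueness of the decomposition of a linear operator into homogeneous components forces $\ad_\gt(B)=\ad_\gt(B_0)$ and $\ad_\gt(C)=\ad_\gt(C_0)$ for the degree-zero components $B_0,C_0\in\gt_0$. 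Restricting to the $\ad_\gt(A)$-invariant subspace $\mt$, and using that the restriction of a semisimple (resp. nilpotent) endomorphism to an invariant subspace stays semisimple (resp. nilpotent), the uniqueness of the Jordan--Chevalley decomposition in $\gl_\K(\mt)$ yields that the semisimple and nilpotent parts of $\rhoup(A)=\ad_\gt(A)|_\mt$ are $\rhoup(B_0)$ and $\rhoup(C_0)$, both in $\rhoup(\gt_0)$. As $A$ was arbitrary, $\rhoup(\gt_0)$ is decomposable in $\mt$; applying Lemma~\ref{lemma8.13} now to each almost effective member of $\Pp_0(\mt,\gt_0)$ gives their decomposability, closing the cycle.
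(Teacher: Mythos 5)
Your proof is correct, and its backbone---reducing all three conditions to the single condition that $\rhoup(\gt_0)$ be decomposable in $\mt$, via Lemma~\ref{lemma8.13}---is exactly the argument the paper has in mind: the proposition is stated there without proof, right after the remark that Lemma~\ref{lemma8.13} makes decomposability of an almost effective prolongation a property of its type alone, and both trivial implications rest, as in your proposal, on the fact that the semidirect product $\gt_0\,{\oplus}\,\mt$ is itself a finite dimensional almost effective member of $\Pp_{0}(\mt,\gt_{0}).$ Where you go beyond the paper's tacit argument is in the implication starting from the second condition: as you observe, a decomposable prolongation of type $\gt_{0}$ of $\mt$ need not be almost effective, so Lemma~\ref{lemma8.13} (whose hard direction relies on Lemma~\ref{lm8.7}, hence on almost effectiveness) cannot simply be quoted for it. Your replacement argument is sound: the Jordan--Chevalley summands of $\ad_{\gt}(A),$ for $A\,{\in}\,\gt_{0},$ are homogeneous of degree $0$; if they are inner, say $D_{s}\,{=}\,\ad_{\gt}(B),$ then comparing homogeneous components of operators gives $D_{s}\,{=}\,\ad_{\gt}(B_{0})$ with $B_{0}\,{\in}\,\gt_{0}$; and restricting to the invariant subspace $\mt,$ where semisimplicity, nilpotency and commutation are preserved, the uniqueness of the Jordan--Chevalley decomposition in $\gl_{\K}(\mt)$ places the semisimple and nilpotent parts of $\rhoup(A)$ inside $\rhoup(\gt_{0}).$ This shows that the implication (existence of a decomposable prolongation) $\Rightarrow$ ($\rhoup(\gt_{0})$ decomposable in $\mt$) needs no effectiveness hypothesis at all, which is precisely what the literal reading of the second item requires; in this respect your write-up is more complete than the paper's.
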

\section{Reductive type} \label{sec-reduc}
Let $\gt{=}{\sum}_{\pq\in\Z}\,\gt_{\pq}$ be a finite dimensional $\Z$-graded Lie algebra,
$\nt$ its maximal nilpotent ideal and $\nt_{0}{\coloneqq}\nt\,{\cap}\,\gt_{0}.$ 
\begin{defn}
 We say that $\gt$ is of \emph{reductive type} if $\nt_{0}\,{=}\,\{0\}.$
\end{defn} 
By Theorem~\ref{thm8.12} we obtain 
\begin{prop} Let $\mt{=}{\sum}_{\pq<0}\,\gt_{\pq}$ be a finite dimensional $\Z$-graded
nilpotent Lie algebra, $\gt_{0}$ a Lie subalgebra of $\Der_{0}(\mt)$ and
$\Pp_{0}(\mt,\gt_{0})$ the collection of all finite dimensional
almost effective prolongations of type $\gt_{0}$ of $\mt.$ Then the following are equivalent: 
\begin{itemize}
 \item $\gt_{0}\,{\oplus}\,\mt$ is of reductive type;
 \item $\Pp_{0}(\mt,\gt_{0})$ contains a $\gt$ of reductive type;
 \item all $\gt$ in $\Pp_{0}(\mt,\gt_{0})$ are of reductive type.\qed
\end{itemize}
\end{prop}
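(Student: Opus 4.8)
The plan is to recognise that each of the three assertions is the statement ``$\nt(\gt)\,{\cap}\,\gt_{0}\,{=}\,\{0\}$'' for an appropriate $\gt\,{\in}\,\Pp_{0}(\mt,\gt_{0})$, and then to prove, using Theorem~\ref{thm8.12}, that $\nt(\gt)\,{\cap}\,\gt_{0}$ is one and the same subspace of $\gt_{0}$ for every member of $\Pp_{0}(\mt,\gt_{0})$. First I would note that the semidirect product $\gt_{0}\,{\oplus}\,\mt$ is itself an object of $\Pp_{0}(\mt,\gt_{0})$ (Notation~\ref{ntz-a.e.}): its only nonnegative homogeneous component is $\gt_{0}$, so almost effectiveness amounts to $\{X\,{\in}\,\gt_{0}\mid\rhoup(X)\,{=}\,0\}\,{=}\,\{0\}$, which holds because $\gt_{0}\,{\subseteq}\,\Der_{0}(\mt)$ acts on $\mt$ through the tautological, hence faithful, representation $\rhoup$. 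Thus the first bullet is exactly ``$\gt\,{=}\,\gt_{0}\,{\oplus}\,\mt$ is of reductive type.''

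Then I would fix an arbitrary $\gt\,{\in}\,\Pp_{0}(\mt,\gt_{0})$ and invoke Theorem~\ref{thm8.12}, which gives $\nt(\gt)\,{\cap}\,\gt_{0}\,{=}\,\{X\,{\in}\,\rt_{0}\mid\rhoup(X)\text{ is nilpotent on }\mt\}$, with $\rt_{0}\,{=}\,\rt\,{\cap}\,\gt_{0}$ the degree-$0$ part of the radical of \emph{that} prolongation. The core step is to show that this set does not depend on $\gt$: I claim it coincides with the intrinsic set $\nt_{0}^{\flat}\,{\coloneqq}\,\{X\,{\in}\,\rad(\gt_{0})\mid\rhoup(X)\text{ is nilpotent on }\mt\}$, which is manufactured solely from $\mt$ and $\gt_{0}$. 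Since $\rt_{0}$ is a solvable ideal of $\gt_{0}$ it lies in $\rad(\gt_{0})$, giving the inclusion ``$\subseteq$''. For ``$\supseteq$'' I would use Lemma~\ref{lemma8.11}, by which $\rhoup(X)$ is nilpotent iff $\ad_{\gt}(X)$ is nilpotent, together with the computation already performed inside the proof of Theorem~\ref{thm8.12}: an $\ad_{\gt}$-semisimple nonzero central element of the graded Levi factor cannot be rendered nilpotent by adding a radical term, so the $\ad_{\gt}$-nilpotent elements of $\rad(\gt_{0})$ necessarily sit in $\rt_{0}$.

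Putting the two paragraphs together, $\nt(\gt)\,{\cap}\,\gt_{0}\,{=}\,\nt_{0}^{\flat}$ for \emph{every} $\gt\,{\in}\,\Pp_{0}(\mt,\gt_{0})$, a subspace in which $\gt$ no longer appears. Hence ``$\gt$ is of reductive type'' is, uniformly in $\gt$, equivalent to the single condition $\nt_{0}^{\flat}\,{=}\,\{0\}$; applying this to the semidirect product and to an arbitrary prolongation yields at once that the three bullets are equivalent. I expect the only genuinely delicate point to be the inclusion ``$\supseteq$'' above, namely that $\rhoup$-nilpotency within the abstract radical $\rad(\gt_{0})$ forces membership in the possibly smaller radical $\rt_{0}$ of the ambient $\gt$; everything else is bookkeeping, and that inclusion is precisely what the proof of Theorem~\ref{thm8.12} already establishes.
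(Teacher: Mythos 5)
Your proof is correct and follows essentially the same route as the paper: the paper states this proposition as an immediate corollary of Theorem~\ref{thm8.12} (with no written proof), relying on the fact --- already noted after Lemma~\ref{lemma8.11} and established inside the proof of Theorem~\ref{thm8.12}, where the $\ad_{\gt}$-nilpotent elements of $\rad(\gt_{0})$ are shown to lie in $\rt_{0}$ --- that $\nt(\gt)\,{\cap}\,\gt_{0}$ depends only on $\mt$ and $\gt_{0}$. Your write-up simply makes that independence argument explicit, including the verification that the semidirect product $\mt\,{\oplus}\,\gt_{0}$ belongs to $\Pp_{0}(\mt,\gt_{0})$, which the paper builds into Notation~\ref{ntz-a.e.}.
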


%We recall (see e.g. \cite[Ch.I,\S{6.4}]{n1998lie}) 
%that a Lie algebra $\kt$ is \emph{reductive} 
%if its adjoint representation is semisimple.
%This is equivalent to the fact that its derived algebra 
%$[\kt,\kt]$ is semisimple. 
%If $\kt$ is reductive, then
%its 
%center is a direct sum complement of 
% $[\kt,\kt]$  in $\kt.$
 \begin{prop}
 Let $\gt$ be a finite dimensional 
 $\Z$-graded Lie algebra. If $\gt$ is of reductive type, 
 then $\gt_{0}$ is reductive and 
 the subalgebra $\rt_{0}$ of the degree $0$ 
homogeneous elements of its radical $\rt$ 
is contained in the center of $\gt_{0}.$ 
\end{prop}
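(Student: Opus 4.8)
\emph{Proof proposal.}
The plan is to read off the structure of $\gt_{0}$ from a $\Z$-graded Levi--Mal\v{c}ev decomposition of $\gt$, exploiting that the reductive-type hypothesis annihilates precisely the degree-$0$ part of the nilradical. I would dispose of the second assertion first, since it is really the crux and requires no decomposition at all. Recall from \S\ref{secnot} that every inner derivation $\ad_{\gt}(X)$ carries $\rt$ into $\nt$. Hence, for $X\in\gt_{0}$ and $Y\in\rt_{0}\subseteq\rt$, the bracket $[X,Y]$ lies in $\nt$; being homogeneous of degree $0$ it lies in $\nt\cap\gt_{0}=\nt_{0}$, which is $\{0\}$ by the reductive-type hypothesis. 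Therefore $[\gt_{0},\rt_{0}]=\{0\}$, that is, $\rt_{0}$ is contained in the centre of $\gt_{0}$ (and, as a byproduct, $\rt_{0}$ is abelian).

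Next I would fix a $\Z$-graded Levi--Mal\v{c}ev decomposition $\gt=\ses\oplus\rt$ furnished by Theorem~\ref{lmc-dec}, with $\ses$ a $\Z$-graded semisimple Levi factor. Since both summands are graded, passing to degree-$0$ components gives $\gt_{0}=\ses_{0}\oplus\rt_{0}$ with $\ses_{0}=\ses\cap\gt_{0}$. As in the proof of Theorem~\ref{thm8.12}, $\ses_{0}$ is reductive: because $\ses$ is semisimple all its derivations are inner, so the derivation inducing its grading is $\ad_{\ses}(E_{\ses})$ for a characteristic element $E_{\ses}\in\ses_{0}$, which is $\ad_{\ses}$-semisimple; consequently $\ses_{0}$, being the centraliser of $E_{\ses}$ in $\ses$, is reductive, and $[\ses_{0},\ses_{0}]$ is semisimple.

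Finally I would combine the two steps. Because $\rt_{0}$ is central in $\gt_{0}$, the mixed brackets $[\ses_{0},\rt_{0}]$ and the bracket $[\rt_{0},\rt_{0}]$ vanish, so that $[\gt_{0},\gt_{0}]=[\ses_{0},\ses_{0}]$, which is semisimple by the previous paragraph. Invoking the criterion recalled after Theorem~\ref{lmc-dec}, namely that a finite dimensional Lie algebra is reductive if and only if its derived algebra is semisimple, we conclude that $\gt_{0}$ is reductive, which completes the argument. The only step demanding genuine care is the reductivity of $\ses_{0}$, equivalently the fact that a $\Z$-graded semisimple Lie algebra always carries a characteristic element; by contrast, the centrality of $\rt_{0}$ needs no effectiveness assumption whatsoever, being forced purely by $[\gt,\rt]\subseteq\nt$ together with $\nt_{0}=\{0\}$.
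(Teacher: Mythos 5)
Your proposal is correct and follows essentially the same route as the paper's proof: first the centrality of $\rt_{0}$ from $[\gt_{0},\rt_{0}]\subseteq\nt\cap\gt_{0}=\nt_{0}=\{0\}$, then a $\Z$-graded Levi--Mal\v{c}ev decomposition with $\ses_{0}=\ses\cap\gt_{0}$ reductive, concluding that $\gt_{0}$ is reductive. You merely make explicit two points the paper leaves implicit, namely the characteristic-element argument showing $\ses_{0}$ is reductive and the use of the criterion that $\gt_{0}$ is reductive because $[\gt_{0},\gt_{0}]=[\ses_{0},\ses_{0}]$ is semisimple.
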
 
\begin{proof}
 Since $[\gt_{0},\rt_{0}]$ is contained in $\nt_{0},$ 
 the assumption that $\nt_{0}\,{=}\,\{0\}$
 implies that $\rt_{0}$ is contained in the center of  
 $\gt_{0}.$ By Theorem~\ref{lmc-dec}, $\gt$ has
 a $\Z$-graded semisimple Levi factor $\ses$
 and $\ses_{0}{\coloneqq}\ses\,{\cap}\,\gt_{0}$ 
 is reductive. 
 Then $\gt_{0}$ is reductive, being the sum of a
 reductive Lie subalgebra and of its centraliser. 
\end{proof}
 \par 
 In the  following lemma we do not require that $\gt$ is $\Z$-graded.
\begin{lem}\label{ll8.16}
 Let $\gt_{0}$ be a Lie subalgebra 
 of a finite dimensional Lie algebra $\gt$ 
 and $\sfW$ an $\ad_{\gt}({\gt_{0}})$-invariant subspace of $\gt.$ 
 If the representation of $\ad_{\gt}({\gt_{0}})$ on $\sfW$ 
 is semisimple, then 
 also the Lie subalgebra $\wt$ of $\gt$ generated by $\sfW$ is
a semismimple $\ad_{\gt}({\gt_{0}})$-module. 
If $\nt$ is the maximal nilpotent ideal of
$\gt,$ then $[\gt_{0}\,{\cap}\,\nt,\wt]\,{=}\,\{0\}.$ 
\end{lem}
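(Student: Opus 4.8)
The plan is to prove the two assertions separately. For the first, I would analyse the $\ad_{\gt}(\gt_{0})$-action on the iterated brackets that generate $\wt$ and reduce everything to the semisimplicity of tensor powers of $\sfW$; for the second, I would identify $\gt_{0}\cap\nt$ with an ideal acting by nilpotent endomorphisms and invoke the semisimplicity just established.

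First I would record that each $\ad_{\gt}(A)$ with $A\,{\in}\,\gt_{0}$ is a derivation of $\gt$ preserving $\sfW$, hence preserving the subalgebra $\wt$ generated by $\sfW$; so $\wt$ is an $\ad_{\gt}(\gt_{0})$-submodule of $\gt$. Now $\wt$ is the sum of the images of the iterated bracket maps $\sfW^{\otimes k}\,{\to}\,\gt$ ($k\,{\geq}\,1$), and each such map is $\ad_{\gt}(\gt_{0})$-equivariant by the Leibniz rule. Since a quotient of a semisimple module is semisimple and a sum of semisimple submodules is semisimple, it suffices to show that every tensor power $\sfW^{\otimes k}$ is a semisimple $\ad_{\gt}(\gt_{0})$-module. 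This is the crux of the argument.

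For the tensor powers I would pass to the image $\at\,{\coloneqq}\,\ad_{\gt}(\gt_{0})|_{\sfW}\,{\subseteq}\,\gl_{\K}(\sfW)$. Because $\sfW$ is a semisimple module, $\at$ is reductive and $\sfW$ is a faithful semisimple $\at$-module, so $\at\,{=}\,[\at,\at]\,{\oplus}\,\zt$ with $[\at,\at]$ semisimple and its centre $\zt$ acting on $\sfW$ by semisimple endomorphisms. On $\sfW^{\otimes k}$ the diagonal action is semisimple over $[\at,\at]$ by Weyl's theorem, while $\zt$ still acts by commuting semisimple endomorphisms (the diagonal action of a semisimple endomorphism is again semisimple). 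Since a finite dimensional module over a reductive Lie algebra is semisimple as soon as its centre acts by semisimple endomorphisms, $\sfW^{\otimes k}$ is semisimple over $\at$, hence over $\ad_{\gt}(\gt_{0})$. This proves that $\wt$ is a semisimple $\ad_{\gt}(\gt_{0})$-module. For the second assertion, observe that $\gt_{0}\cap\nt$ is an ideal of $\gt_{0}$ (since $\nt$ is an ideal of $\gt$) whose elements are $\ad_{\gt}$-nilpotent, the maximal nilpotent ideal $\nt$ consisting of $\ad_{\gt}$-nilpotent elements. Thus $\ad_{\gt}(\gt_{0}\cap\nt)$ is an ideal of $\ad_{\gt}(\gt_{0})$ acting by nilpotent endomorphisms on $\gt$, hence on the invariant subspace $\wt$. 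As $\wt$ is now known to be a semisimple $\ad_{\gt}(\gt_{0})$-module, the image of $\ad_{\gt}(\gt_{0})$ in $\gl_{\K}(\wt)$ is reductive, and an ideal of a reductive linear Lie algebra consisting of nilpotent endomorphisms must vanish: its intersection with the derived part is a semisimple Lie algebra of nilpotents, hence zero by Engel's theorem, while its intersection with the centre consists of endomorphisms that are simultaneously nilpotent and semisimple. Therefore $\ad_{\gt}(\gt_{0}\cap\nt)$ annihilates $\wt$, that is $[\gt_{0}\cap\nt,\wt]\,{=}\,\{0\}$.

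\textbf{Main obstacle.} The delicate point is the semisimplicity of the tensor powers $\sfW^{\otimes k}$: complete reducibility is not inherited by tensor products of modules over an arbitrary Lie algebra, so the argument genuinely depends on reductivity of the image $\at$ together with separate control of the central summand $\zt$, which is exactly where the hypothesis that $\sfW$ is a \emph{semisimple} $\ad_{\gt}(\gt_{0})$-module (and not merely invariant) is used in full.
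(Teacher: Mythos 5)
Your proof is correct. For the first assertion it follows, in substance, the same route as the paper: both arguments reduce the semisimplicity of $\wt$ to that of the tensor powers of $\sfW$ and then transport it to $\wt$ along $\ad_{\gt}(\gt_{0})$-equivariant maps (you use the iterated bracket maps $\sfW^{\otimes k}\to\gt$, the paper uses the canonical projection of the free Lie algebra $\ft(\sfW)$ onto $\wt$, which amounts to the same thing). The difference lies in how the tensor-power step is justified: the paper simply cites Bourbaki (Ch.~I, \S{6}, Cor.~1: a tensor product of finite dimensional semisimple representations is semisimple), whereas you reprove this fact for powers of $\sfW$ via the criterion that a finite dimensional module over a reductive Lie algebra is semisimple precisely when the centre acts by semisimple endomorphisms --- Weyl's theorem handling the derived part, sums of commuting semisimple operators handling the centre. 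This costs some length but makes the argument self-contained. The more significant difference is at the end: the paper's printed proof stops after the first assertion and never addresses $[\gt_{0}\,{\cap}\,\nt,\wt]\,{=}\,\{0\}$, while you supply a complete argument for it --- $\ad_{\gt}(\gt_{0}\,{\cap}\,\nt)$ restricts to an ideal of nilpotent endomorphisms of $\wt$ inside the image of $\ad_{\gt}(\gt_{0})$ in $\gl_{\K}(\wt)$; that image is reductive with semisimply acting centre because $\wt$ is a semisimple module, and such an ideal must vanish (Engel's theorem on its semisimple part, ``nilpotent and semisimple implies zero'' on its central part). This is exactly the missing half of the paper's proof, and your version of it is sound; if you want it tighter, note that by Engel's theorem the ideal is nilpotent as a Lie algebra, hence solvable, hence contained in the centre of the reductive image, after which every element is simultaneously nilpotent and semisimple and therefore zero.
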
\begin{proof}
The tensor product representation of finite dimensional 
semisimple representations 
of a Lie algebra $\gt_{0}$ is semisimple (see e.g. \cite[Ch.I, \S{6}, Cor.1]{n1998lie}).
Therefore
each subspace %$\cg_{\pq}(\sfW)$ 
$\ft_{\pq}(\sfW)$ 
of the free Lie algebra $\ft(\sfW)$ generated by
$\sfW$ is a semisimple $\gt_{0}$-module. 
Let $\varpi\,{:}\,\ft(\sfW)\,{\to}\,\wt$ be the natural projection. Since the image by
$\varpi$ of a finite dimensional 
irreducible $\gt_{0}$-submodule is either $\{0\}$ or semisimple,
it turns out that $\wt$ is a sum of irreducible $\gt_{0}$-submodules and hence
a semisimple $\gt_{0}$-module.
\end{proof}

\begin{prop}\label{pp8.13}
 Let $\gt$ be a  finite dimensional $\Z$-graded Lie algebra.
 If $\gt$ is almost effective,
 then 
the following are equivalent \begin{enumerate}
\item[($i$)]  $\gt$ is decomposable  
 and 
 of reductive type;
 \item[($ii$)] 
$\rhoup\,{:}\,\gt_0\,{\to}\,\gl_{\K}(\mt)$ is semisimple;
 \item[($iii$)] 
 $\gt$ is a semisimple $\ad_{\gt}(\gt_{0})$-module. 
\end{enumerate} 
\end{prop}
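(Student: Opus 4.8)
The plan is to reduce everything to the two equivalences $(ii)\Leftrightarrow(iii)$ and $(i)\Leftrightarrow(ii)$. The common starting point is that almost effectiveness forces $\rhoup$ to be \emph{faithful}: indeed $\ker\rhoup=\{X\in\gt_{0}\mid[X,\mt]=\{0\}\}\subseteq\{X\in\gt_{+}\mid[X,\mt]=\{0\}\}=\{0\}$ by \eqref{eq8.19}. Thus $\rhoup$ identifies $\gt_{0}$ with the subalgebra $\rhoup(\gt_{0})$ of $\gl_{\K}(\mt)$ and carries $\rad(\gt_{0})$ isomorphically onto $\rad(\rhoup(\gt_{0}))$; I will use this identification without further comment.

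I first dispose of $(ii)\Leftrightarrow(iii)$. The implication $(iii)\Rightarrow(ii)$ is immediate, since $\mt={\sum}_{\pq<0}\gt_{\pq}$ is a sum of $\ad_{\gt}(\gt_{0})$-submodules, hence semisimple as soon as $\gt$ is. For $(ii)\Rightarrow(iii)$ I proceed degree by degree. As $\mt$ is a faithful completely reducible $\gt_{0}$-module, $\gt_{0}$ is reductive, so $\gt_{0}$ is itself a semisimple $\ad_{\gt}(\gt_{0})$-module; the negative summands are semisimple by hypothesis, and I treat $\gt_{\pq}$, $\pq>0$, by induction on $\pq$. Granting that $\gt_{\qq}$ is a semisimple $\gt_{0}$-module for all $\qq<\pq$, the map $\gt_{\pq}\ni X\mapsto(\ad_{\gt}(X)|_{\gt_{\qq}})_{\qq<0}\in{\bigoplus}_{\qq<0}\Hom_{\K}(\gt_{\qq},\gt_{\pq+\qq})$ is $\gt_{0}$-equivariant and, by \eqref{eq8.19}, injective. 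Each target summand is $\gt_{\qq}^{*}\otimes\gt_{\pq+\qq}$ with $\qq<0$ and $\pq+\qq<\pq$, so both factors are semisimple $\gt_{0}$-modules---by the hypothesis on $\mt$, by reductivity of $\gt_{0}$, or by the inductive hypothesis, according to the sign of their degrees; since duals and tensor products of semisimple modules are semisimple (\cite[Ch.I,\S{6},Cor.1]{n1998lie}), $\gt_{\pq}$ embeds in a semisimple module and is therefore semisimple.

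Next, $(ii)\Rightarrow(i)$. Complete reducibility of $\mt$ says precisely that $\rhoup(\gt_{0})$ is $\mt$-reductive, hence decomposable in $\mt$; by Lemma~\ref{lemma8.13} this makes $\gt$ decomposable. For reductive type, take $X\in\nt_{0}$. By Theorem~\ref{thm8.12} we have $X\in\rt_{0}=\rt\cap\gt_{0}$ with $\rhoup(X)$ nilpotent; since $\rt\cap\gt_{0}$ is a solvable ideal of the reductive algebra $\gt_{0}$, it lies in $\rad(\gt_{0})=\zt(\gt_{0})$, so $\rhoup(X)$ is central in $\rhoup(\gt_{0})$. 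On any irreducible submodule $S\subseteq\mt$ the operator $\rhoup(X)|_{S}$ commutes with the action, hence belongs to the division algebra $\Hom_{\gt_{0}}(S,S)$ by Schur's lemma; being nilpotent it vanishes. As $\mt$ is the sum of such submodules, $\rhoup(X)=0$ and therefore $X=0$. Thus $\nt_{0}=\{0\}$ and $\gt$ is of reductive type.

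It remains to prove $(i)\Rightarrow(ii)$, which I expect to be the delicate step. Since $\gt$ is decomposable, Lemma~\ref{lemma8.13} gives that $\rhoup(\gt_{0})$ is decomposable in $\mt$, so by Theorem~\ref{lmc-dec} it admits a Levi-Chevalley decomposition $\rhoup(\gt_{0})=\lt_{\mt}\oplus\nt_{\mt}$ with $\lt_{\mt}$ $\mt$-reductive and $\nt_{\mt}$ the ideal of its nilpotent endomorphisms; it suffices to show $\nt_{\mt}=\{0\}$, for then $\rhoup(\gt_{0})=\lt_{\mt}$ is $\mt$-reductive, i.e.\ $\rhoup$ is semisimple. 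Under the identification above and using Lemma~\ref{lemma8.11} (nilpotency of $\rhoup(X)$ on $\mt$ is equivalent to nilpotency of $\ad_{\gt}(X)$ on $\gt$), $\nt_{\mt}$ corresponds to $\{X\in\rad(\gt_{0})\mid\ad_{\gt}(X)\text{ nilpotent}\}$. The real obstacle is that $\rad(\gt_{0})$ may properly contain $\rt_{0}=\rt\cap\gt_{0}$, so reductive type does not apply verbatim; I remove it with a $\Z$-graded Levi-Mal\v{c}ev decomposition $\gt=\ses\oplus\rt$. Writing $X=X_{\ses}+X_{\rt}$ with $X_{\ses}\in\ses_{0}:=\ses\cap\gt_{0}$ and $X_{\rt}\in\rt_{0}$, the image of the ideal $\rad(\gt_{0})$ under $\gt\to\gt/\rt\cong\ses$ is a solvable ideal of the reductive $\ses_{0}$, hence central; and central elements of $\ses_{0}$ are $\ad_{\gt}$-semisimple (they lie in a Cartan subalgebra of $\ses$), so $X_{\ses}$ is $\ad_{\gt}$-semisimple. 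Exactly as in the proof of Theorem~\ref{thm8.12}, a nonzero $X_{\ses}$ would make $\ad_{\gt}(X_{\ses}+X_{\rt})$ non-nilpotent; therefore $X_{\ses}=0$, so $X=X_{\rt}\in\rt_{0}$ is $\ad_{\gt}$-nilpotent, i.e.\ $X\in\nt_{0}=\{0\}$ by reductive type. Hence $\nt_{\mt}=\{0\}$, which completes $(i)\Rightarrow(ii)$ and, with the previous steps, the proof.
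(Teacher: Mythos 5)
Your proof is correct, but it follows a genuinely different route from the paper's. The paper dismisses both $(iii)\Rightarrow(ii)$ and $(i)\Rightarrow(ii),(iii)$ as immediate and concentrates on $(ii)\Rightarrow(i),(iii)$, which it proves in one stroke: fixing a $\Z$-graded Levi-Mal\v{c}ev decomposition $\gt=\ses\oplus\rt$, it takes the Jordan-Chevalley decomposition $\ad_{\gt}(A)=D_{s}+D_{n}$ of $A\in\rt_{0}$, uses hypothesis $(ii)$ and the uniqueness of the Jordan-Chevalley decomposition in $\gl_{\K}(\mt)$ to see that $D_{n}$ and $D_{s}-\ad_{\gt}(A)$ vanish on $\mt$, and then invokes Lemma~\ref{lm8.7} to conclude that $\ad_{\gt}(A)=D_{s}$ is semisimple; this single computation yields $(iii)$ (the $\ses_{0}$-action is semisimple and the elements of $\rt_{0}$ act semisimply) and $(i)$ ($\gt_{0}$ is a reductive ideal plus an abelian algebra of $\ad$-semisimple elements) at once. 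You instead prove $(ii)\Rightarrow(iii)$ by a degree-by-degree induction, embedding $\gt_{\pq}$ equivariantly into ${\bigoplus}_{\qq<0}\Hom_{\K}(\gt_{\qq},\gt_{\pq+\qq})$ and using semisimplicity of duals and tensor products --- essentially the mechanism of Lemma~\ref{ll8.16} --- thereby avoiding Jordan-Chevalley theory entirely for that implication; you get $(ii)\Rightarrow(i)$ from Lemma~\ref{lemma8.13}, Theorem~\ref{thm8.12} and Schur's lemma; and, most substantially, you supply a complete proof of $(i)\Rightarrow(ii)$, which the paper asserts without argument. That last step is where your write-up adds real value: you correctly isolate the actual difficulty --- that $\rad(\gt_{0})$ may be strictly larger than $\rt\cap\gt_{0}$ (it contains, for instance, the grading element when $\gt$ is semisimple), so reductive type does not apply directly --- and you remove it by combining the Levi-Chevalley decomposition of $\rhoup(\gt_{0})$ in $\mt$ with the non-nilpotency argument from the proof of Theorem~\ref{thm8.12}. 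In short, the paper's route buys brevity through one uniform tool (Jordan-Chevalley uniqueness plus Lemma~\ref{lm8.7}), while yours buys completeness: every implication in the cycle is actually proved, including the one the paper leaves to the reader.
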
 
\begin{proof} 
%It is not restrictive to assume that $\gt$ contain the characteristic element $E,$
%so that its adjoint representation is faithful. 
Clearly ($iii$)$\Rightarrow$($ii$) because
$\mt$ is $\ad_{\gt}(\gt_{0})$-invariant and $(i){\Rightarrow}(ii),(iii).$ \par
Let us show that $(ii){\Rightarrow}(i),(iii).$ \par
Let $\gt\,{=}\,\ses\,{\oplus}\,\rt$
be a Levi-Mal\v{c}ev decomposition of $\gt,$ with a $\Z$-graded semisimple
Levi factor~$\ses$. 
Since the adjoint representation restricts to a semi\-sim\-ple representation
of $\ses_{0}$ on $\gt,$ it suffices to show that the inner derivations corresponding
to the elements of $\rt_{0}$ are semisimple. Let indeed 
 $A\,{\in}\,\rt_{0}$ and
$D_{s},D_{n}\,{\in}\,\Der_{0}(\gt)$ be the semisimple and nilpotent summands of the Jordan-Chevalley
decomposition of $\ad_{\gt}(A).$ 
If we assume that $\rhoup$ is semisimple, 
by the uniqueness of the Jordan-Chevalley decomposition in $\gl_{\K}(\mt)$ we obtain
that $D_{s}{-}\ad_{\gt}(A)$ and $D_{n}$ vanish on $\mt.$ 
By Lemma~\ref{lm8.7} this yields $D_{n}\,{=}\,0$ and $\ad_{\gt}(A)\,{=}\,D_{s}$ is
a semisimple derivation on $\gt,$ proving 
at the same time that $(iii)$ holds, because $\ad_{\gt}(A)$ is semisimple for $A\,{\in}\rt_{0}$ and
that 
$(i)$ holds, because $\gt_{0}$ is the sum of a reductive ideal $\ses_{0}$ and an abelian
Lie algebra of semisimple elements.
\end{proof}
\begin{cor}
If $\mt$ is fundamental,  then $\gt$ 
 is decomposable 
 % in its characteristic prolongation $\cg$ 
 and 
 of the reductive type if and only 
 if $\rhoup_{-1}$ 
% the adjoint representation of $\gt_{0}$
% on $\gt_{-1}$ 
 is semisimple.\qed
\end{cor}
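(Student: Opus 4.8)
The plan is to reduce the whole statement to Proposition~\ref{pp8.13}, which (under the standing hypotheses that $\gt$ is finite dimensional and almost effective) already records the equivalence of ``$\gt$ decomposable and of reductive type'' with ``$\rhoup\colon\gt_0\to\gl_{\K}(\mt)$ is semisimple''. Granting that equivalence, it suffices to prove that, when $\mt$ is fundamental, the full representation $\rhoup$ is semisimple if and only if its restriction $\rhoup_{-1}$ to $\gt_{-1}$ is semisimple.

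One implication is immediate. Since $\gt_{-1}$ is an $\ad_{\gt}(\gt_0)$-invariant subspace of $\mt$, if $\rhoup$ is semisimple then so is its subrepresentation $\rhoup_{-1}$, because a submodule of a semisimple module is again semisimple.

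For the converse I would invoke Lemma~\ref{ll8.16} with $\sfW=\gt_{-1}$. By hypothesis the action of $\ad_{\gt}(\gt_0)$ on $\sfW$, namely $\rhoup_{-1}$, is semisimple, so the lemma guarantees that the Lie subalgebra $\wt$ of $\gt$ generated by $\gt_{-1}$ is a semisimple $\ad_{\gt}(\gt_0)$-module. Fundamentality of $\mt$ means precisely that $\mt$ is generated as a Lie algebra by $\gt_{-1}$, whence $\wt=\mt$. Therefore $\mt$ is a semisimple $\ad_{\gt}(\gt_0)$-module, i.e. $\rhoup$ is semisimple, and Proposition~\ref{pp8.13} then returns decomposability and reductive type.

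The only substantive point is this converse, and its entire weight rests on Lemma~\ref{ll8.16}: one must know that being a semisimple $\gt_0$-module propagates from the generating space $\gt_{-1}$ to the whole subalgebra it generates. This in turn relies on the stability of semisimplicity under tensor products and under passage to sub- and quotient modules, which is exactly what the proof of that lemma supplies. Once it is in hand, fundamentality does the rest by identifying $\wt$ with $\mt$.
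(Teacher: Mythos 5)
Your proof is correct and follows exactly the route the paper intends: the corollary is stated with a \qed as an immediate consequence of Proposition~\ref{pp8.13} combined with Lemma~\ref{ll8.16} applied to $\sfW=\gt_{-1}$, using fundamentality to identify the subalgebra generated by $\gt_{-1}$ with $\mt$ (this is also precisely how the paper itself invokes these two results in the proof of the corollary following Theorem~\ref{thm8.22}). The trivial direction (a submodule of a semisimple module is semisimple) is handled as the paper would.
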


\section{Quasi-effective $\Z$-graded Lie algebras}
\label{sec-quasi}
Under the stronger assumption of quasi-effectiveness we obtain 
better structure theorems.
\begin{lem} \label{lm8.16}
Let $\gt$ be a quasi-effective finite dimensional $
\Z$-graded Lie algebra. If $\gt$ is
of reductive type, then the subspaces $\nt_{\pq}$ of  
 homogeneous elements of 
 positive degree $\pq$ of its maximal nilpotent ideal $\nt$ 
are trivial. % for all positive integers~$\pq.$
\end{lem}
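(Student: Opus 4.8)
The plan is to argue by contradiction, producing a nonzero homogeneous element of strictly positive degree whose existence is ruled out by the first quasi-effectiveness condition in \eqref{eq8.18}. Suppose $\nt_{+}\,{:=}\,{\sum}_{\pq>0}\nt_{\pq}$ is nonzero. The crucial choice is to work at the \emph{bottom} of the positive part rather than at the top: I set $\pq_{0}\,{=}\,\min\{\pq>0\mid \nt_{\pq}\,{\neq}\,\{0\}\}$ and pick a nonzero $X\,{\in}\,\nt_{\pq_{0}}$. The whole argument then reduces to showing that such an $X$ normalises $\mt$, i.e. that $[X,\mt]\,{\subseteq}\,\mt$.

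To establish this, I would use that $\nt$ is a \emph{graded} ideal, so that for every $\rq\,{<}\,0$ the bracket $[X,\gt_{\rq}]$ is homogeneous of degree $\pq_{0}{+}\rq$ and lies in $\nt_{\pq_{0}+\rq}$. I then examine the sign of $\pq_{0}{+}\rq$ in three cases. If $\pq_{0}{+}\rq<0$, the bracket already sits in $\mt$. If $\pq_{0}{+}\rq=0$, it sits in $\nt_{0}$, which is $\{0\}$ precisely \emph{because $\gt$ is of reductive type}. If $\pq_{0}{+}\rq>0$, then $\rq<0$ forces $0<\pq_{0}{+}\rq<\pq_{0}$, and the minimality of $\pq_{0}$ gives $\nt_{\pq_{0}+\rq}\,{=}\,\{0\}$. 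In all three cases $[X,\gt_{\rq}]\,{\subseteq}\,\mt$, and summing over $\rq<0$ yields $[X,\mt]\,{\subseteq}\,\mt$.

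The conclusion is then immediate from quasi-effectiveness: since $X\,{\in}\,\gt_{\pq_{0}}\,{\subseteq}\,\gt_{+}$ satisfies $[X,\mt]\,{\subseteq}\,\mt$, the first inclusion in \eqref{eq8.18} forces $X\,{\in}\,\gt_{0}$; but $X$ is homogeneous of degree $\pq_{0}>0$, so $X\,{\in}\,\gt_{\pq_{0}}\,{\cap}\,\gt_{0}\,{=}\,\{0\}$, contradicting $X\,{\neq}\,0$. Hence no positive degree occurs in $\nt$, that is $\nt_{\pq}\,{=}\,\{0\}$ for all $\pq>0$. The only genuinely non-routine point — which I regard as the heart of the proof rather than an obstacle — is the realisation that one must test the \emph{lowest} positive degree: it is exactly the pairing of ``minimal positive degree'' with the reductive-type hypothesis $\nt_{0}\,{=}\,\{0\}$ that prevents $[X,\mt]$ from leaking into the nonnegative-degree part of $\nt$, after which quasi-effectiveness does the rest.
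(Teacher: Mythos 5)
Your proposal is correct and is essentially the paper's own argument: the paper proves $\nt_{\pq}=\{0\}$ by induction on $\pq\geq 0$ (the base case $\nt_{0}=\{0\}$ being the reductive-type hypothesis), and your choice of the minimal positive degree $\pq_{0}$ is just the minimal-counterexample packaging of that same induction. In both versions the decisive step is the inclusion $[\nt_{\pq},\mt]\subseteq\mt$, obtained because all intermediate components $\nt_{\qq}$ with $0\leq\qq<\pq$ vanish, followed by an application of the first condition in \eqref{eq8.18}.
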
 
\begin{proof}
Let us prove recursively that
$\nt_{\pq}\,{=}\,\{0\}$ for $\pq\,{\geq}\,0.$ 
For $\pq\,{=}\,0$ this is true by assumption.
Let $\pq\,{>}\,0$ and suppose 
we already know that $\nt_{\qq}\,{=}\,\{0\}$ for
$0{\leq}\qq{<}\pq.$ Then we have 
\begin{equation*}
 [\nt_{\pq},\mt]
 \subseteq{\sum}_{\qq<\pq}\nt_{\qq}
 \subseteq{\sum}_{\qq<0}\nt_{\qq}\subseteq\mt.
\end{equation*}
By the quasi-effectiveness assumption this yields 
$\nt_{\pq}\,{\subseteq}\,\nt_{0}\,{=}\,\{0\},$
proving the statement.
\end{proof}
We recall that the \emph{depth} of a $\Z$-graded Lie algebra 
$\gt$ is the largest
integer $\muup$ for which $\gt_{-\muup}\,{\neq}\{0\}.$ 
An almost effective
$\gt\,{\neq}\,\{0\}$ has positive depth.
\begin{prop}\label{pp8.18}
Let $\gt$ be a quasi-effective 
finite dimensional $\Z$-graded Lie algebra.
If $\gt$ is 
of reductive type
and depth $\muup,$ then $\gt_{\pq}\,{=}\,\{0\}$
 for $\pq\,{>}\,\muup.$ 
\end{prop}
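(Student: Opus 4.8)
The plan is to reduce everything to the semisimple Levi factor, where the Killing form forces a symmetry between positive and negative degrees. First I would apply Lemma~\ref{lm8.16}: since $\gt$ is quasi-effective and of reductive type, its maximal nilpotent ideal satisfies $\nt_{\pq}\,{=}\,\{0\}$ for every $\pq\,{>}\,0$. Combining this with \eqref{8.3}, which gives $\nt_{\pq}\,{=}\,\rt_{\pq}$ for $\pq\,{\neq}\,0$, I obtain $\rt_{\pq}\,{=}\,\{0\}$ for all $\pq\,{>}\,0$; that is, the solvable radical has no homogeneous component of strictly positive degree.

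Next I would fix a $\Z$-graded Levi-Mal\v{c}ev decomposition $\gt\,{=}\,\ses\,{\oplus}\,\rt$, which exists by Theorem~\ref{lmc-dec}, with $\ses$ a $\Z$-graded semisimple Levi factor. Since $\gt_{\pq}\,{=}\,\ses_{\pq}\,{\oplus}\,\rt_{\pq}$ and $\rt_{\pq}\,{=}\,\{0\}$ for $\pq\,{>}\,0$, the whole positive part of $\gt$ lies in $\ses$, and it suffices to prove that $\ses_{\pq}\,{=}\,\{0\}$ for $\pq\,{>}\,\muup$. Here I would use the Killing form: as recalled before the previous Proposition, the components $\ses_{\pq}$ and $\ses_{\qq}$ are $\sfk_{\gt}$-orthogonal whenever $\pq\,{+}\,\qq\,{\neq}\,0$, and the restriction of $\sfk_{\gt}$ to the semisimple subalgebra $\ses$ is nondegenerate. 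Consequently $\sfk_{\gt}$ induces a nondegenerate pairing $\ses_{\pq}\,{\times}\,\ses_{-\pq}\,{\to}\,\K$, whence $\dim_{\K}(\ses_{\pq})\,{=}\,\dim_{\K}(\ses_{-\pq})$ for every $\pq$.

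To finish, I would invoke the depth hypothesis: by definition of $\muup$ we have $\gt_{-\pq}\,{=}\,\{0\}$, and a fortiori $\ses_{-\pq}\,{=}\,\{0\}$, for all $\pq\,{>}\,\muup$. The dimension identity then gives $\ses_{\pq}\,{=}\,\{0\}$, and therefore $\gt_{\pq}\,{=}\,\ses_{\pq}\,{=}\,\{0\}$ for $\pq\,{>}\,\muup$, as claimed. The argument is short and presents no real obstacle; the one point demanding attention is the reduction to $\ses$, namely that the vanishing of $\rt_{\pq}$ in positive degree genuinely allows one to discard the radical and apply the Killing-form symmetry to the Levi factor, where it is valid, rather than to $\gt$ itself, where it would fail.
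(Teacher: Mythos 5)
Your proof is correct, but it is organized differently from the paper's. The paper argues ``from the outside in'': for $\pq\,{>}\,\muup$ the component $\gt_{\pq}$ is $\sfk_{\gt}$-orthogonal to all of $\gt$ (orthogonality of $\gt_{\pq}$ and $\gt_{\qq}$ for $\pq{+}\qq\,{\neq}\,0,$ plus $\gt_{-\pq}\,{=}\,\{0\}$ by the depth hypothesis), hence $\gt_{\pq}\,{\subseteq}\,\rt_{\pq}$ by the Bourbaki characterization of the radical as the orthogonal of $[\gt,\gt]$; then \eqref{8.3} and Lemma~\ref{lm8.16} finish it in one line. You instead argue ``from the inside out'': Lemma~\ref{lm8.16} and \eqref{8.3} first kill $\rt_{\pq}$ for $\pq\,{>}\,0,$ and then you need Theorem~\ref{lmc-dec} to produce a $\Z$-graded Levi factor $\ses$ and the nondegeneracy of $\sfk_{\gt}|_{\ses}$ to get the dimension symmetry $\dim_{\K}(\ses_{\pq})\,{=}\,\dim_{\K}(\ses_{-\pq}),$ which together with the depth hypothesis kills $\ses_{\pq}.$ Both routes rest on the same two pillars (graded Killing-form orthogonality and Lemma~\ref{lm8.16}), but the paper's version is more economical: it never invokes the existence of a graded Levi--Mal\v{c}ev decomposition, replacing it by the single fact that $\rt$ is the orthogonal of $[\gt,\gt].$ What your version buys is an explicit reuse of the symmetry $\dim_{\K}(\ses_{\pq})\,{=}\,\dim_{\K}(\ses_{-\pq})$ (essentially item (2) of the Proposition closing Section~\ref{struct}), and your closing caveat is well taken: that symmetry holds on $\ses$ but genuinely fails on $\gt$ itself, which is exactly why the radical must be disposed of first.
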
 
\begin{proof} We note that $\gt_{\pq}$ is orthogonal to 
$\gt,$ and hence belongs to
$\rt_{\pq},$ for $\pq{>}\muup.$ For positive $\pq$ we have 
$\rt_{\pq}\,{=}\,\nt_{\pq}$
and thus the statement is a consequence of 
Lemma~\ref{lm8.16}. 
\end{proof}
A finite dimensional $\Z$-graded Lie algebra $\gt$ has a $\Z$-graded solvable radical $\rt,$
whose depth $\muup_{r}$ we call its \emph{solvable depth}; besides, all
its $\Z$-graded semisimple Levi factors have the 
same depth $\muup_{s},$ that we
call its \emph{semisimple depth}. In case $\gt$ is solvable, we set $\muup_{s}{=}0$
and, likewise, we set $\muup_{r}{=}0$ when
it is semisimple. 
We have the following  
\begin{thm} \label{thm8.22}
A finite dimensional $\Z$-graded quasi-effective
Lie algebra of reductive type 
has a $\Z$-graded Levi-Mal\v{c}ev decomposition 
\begin{equation}\label{q8.17}
\gt=\ses\oplus\,\rt
\end{equation}
with a $\Z$-graded semisimple Levi factor 
\begin{equation}\label{q8.18}
\ses={\sum}_{\pq=-\muup_{s}}^{\muup_{s}}\ses_{\pq}
\end{equation}
and a $\Z$-graded  radical  
\begin{equation}\label{q8.19}
\rt={\sum}_{\pq=-\muup_{r}}^0\rt_{\pq}
\end{equation}
with no homogeneous term of positive degree.
\par 
Moreover, $\gt$ is decomposable if and only if 
%$\ad_{\gt}(A)$ 
$\rhoup(A)$ 
is semisimple on $\mt$ 
for all $A\,{\in}\,\rt_0.$ When $\gt$ is decomposable, quasi-effective  
and of the reductive type, 
the $\Z$-graded Levi factor $\ses$ can be
chosen in such a way that
\begin{equation}\label{q8.20}
[\rt_0,\ses]=\{0\}.
\end{equation}
\end{thm}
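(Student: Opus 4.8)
The plan is to treat the three assertions separately. \emph{Shape of the decomposition.} Theorem~\ref{lmc-dec} already supplies a $\Z$-graded Levi--Mal\v{c}ev decomposition $\gt=\ses\oplus\rt$ with $\ses$ a $\Z$-graded semisimple Levi factor. To pin down the degree range of $\rt$ I would invoke Lemma~\ref{lm8.16}, which (using quasi-effectiveness and reductive type) gives $\nt_{\pq}=\{0\}$ for $\pq\geq 0$; since $\rt_{\pq}=\nt_{\pq}$ for $\pq\neq 0$ by \eqref{8.3}, this forces $\rt_{\pq}=\{0\}$ for $\pq>0$, so $\rt=\sum_{\pq=-\muup_{r}}^{0}\rt_{\pq}$. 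For $\ses$ I would use that its Killing form $\sfk_{\ses}$ is nondegenerate while, by the trace count on degrees, $\ses_{\pq}$ and $\ses_{\qq}$ are $\sfk_{\ses}$-orthogonal whenever $\pq+\qq\neq 0$; hence the pairing $\ses_{\pq}\times\ses_{-\pq}\to\K$ is perfect, $\dim_{\K}\ses_{\pq}=\dim_{\K}\ses_{-\pq}$, and the height and depth of $\ses$ coincide, yielding \eqref{q8.18}.

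\emph{Decomposability criterion.} Since quasi-effectiveness implies almost effectiveness, Proposition~\ref{pp8.13} gives, under the reductive-type hypothesis, that $\gt$ is decomposable if and only if $\rhoup$ is semisimple; so it remains to show that $\rhoup$ is semisimple exactly when $\rhoup(A)$ is a semisimple endomorphism of $\mt$ for every $A\in\rt_{0}$. As $\gt$ is of reductive type, $\gt_{0}$ is reductive with $\rt_{0}$ in its centre (established in \S\ref{sec-reduc}); the grading gives $\gt_{0}=\ses_{0}\oplus\rt_{0}$ with $\ses_{0}=\ses\cap\gt_{0}$, and one checks that the centre of $\gt_{0}$ equals $\zt_{0}\oplus\rt_{0}$, where $\zt_{0}$ is the centre of $\ses_{0}$. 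By the standard criterion (cf.~\cite[Ch.I,\S{6}]{n1998lie}) a finite dimensional representation of a reductive Lie algebra is completely reducible precisely when its centre acts by semisimple operators, so the point is to see that each element of $\zt_{0}\oplus\rt_{0}$ acts semisimply on $\mt$. For $A\in\zt_{0}$ this is automatic: $\ses_{0}$ is the centraliser in $\ses$ of the characteristic element $E_{\ses}$, hence contains a Cartan subalgebra $\hg$ of $\ses$, so $\zt_{0}$ commutes with $\hg$ and therefore $\zt_{0}\subseteq\hg$ consists of semisimple elements of $\ses$, which act semisimply under $\ad_{\gt}|_{\ses}$. For $A\in\rt_{0}$ the hypothesis gives $\rhoup(A)$ semisimple, and, exactly as in the proof of Proposition~\ref{pp8.13}, Lemma~\ref{lm8.7} promotes this to semisimplicity of $\ad_{\gt}(A)$ on $\gt$. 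Conversely, if $\gt$ is decomposable then $\rhoup$ is completely reducible and the central $\rt_{0}$ acts by scalars on each irreducible summand, hence by semisimple operators. This is the desired equivalence.

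\emph{The normalisation $[\rt_{0},\ses]=\{0\}$.} Assume now $\gt$ decomposable; by the previous step $\rt_{0}$ is an abelian algebra of $\ad_{\gt}$-semisimple elements, and, since any derivation of $\gt$ carries $\rt$ into $\nt$, one has $[\rt_{0},\gt]\subseteq\nt\subseteq\rt$. I would then pass to the centraliser $\qt\coloneqq\{X\in\gt\mid[X,\rt_{0}]=\{0\}\}$, which is $\Z$-graded because $\rt_{0}\subseteq\gt_{0}$ acts by degree-$0$ derivations. As $\ad_{\gt}(\rt_{0})$ is a commuting family of semisimple operators, $\gt=\qt\oplus\nt'$ with $\nt'\coloneqq\sum_{A\in\rt_{0}}\ad_{\gt}(A)\gt\subseteq\nt\subseteq\rt$; in particular $\qt+\rt=\gt$. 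Hence $\qt\cap\rt$ is a solvable ideal of $\qt$ with $\qt/(\qt\cap\rt)\cong(\qt+\rt)/\rt=\gt/\rt$ semisimple, so $\qt\cap\rt$ is the radical of $\qt$. Applying Theorem~\ref{lmc-dec} to $\qt$ yields a $\Z$-graded semisimple Levi factor $\ses'$ of $\qt$; since $\ses'\cap\rt\subseteq\ses'\cap(\qt\cap\rt)=\{0\}$ and $\dim_{\K}\ses'=\dim_{\K}(\gt/\rt)$, a dimension count gives $\gt=\ses'\oplus\rt$. Thus $\ses'$ is a $\Z$-graded Levi factor of $\gt$ contained in $\qt$, i.e. $[\rt_{0},\ses']=\{0\}$, and renaming $\ses'$ as $\ses$ establishes \eqref{q8.20}. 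The main obstacle is precisely this last step: the Levi factor must \emph{centralise} $\rt_{0}$, not merely normalise or contain it, and the device that makes the argument routine is the observation that the centraliser $\qt$ is itself $\Z$-graded and still has $\gt/\rt$ as maximal semisimple quotient, so one of \emph{its} graded Levi factors already does the job.
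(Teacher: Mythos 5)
Your proof is correct and follows essentially the same route as the paper's: Lemma~\ref{lm8.16} together with the Killing-form pairing of opposite degrees for the shape \eqref{q8.18}--\eqref{q8.19}, Proposition~\ref{pp8.13} for the decomposability criterion, and a $\Z$-graded Levi factor of the centralizer $\kt=\{X\in\gt\mid[X,\rt_{0}]=\{0\}\}$ of $\rt_{0}$ for \eqref{q8.20}. The only differences are expository: where the paper cites Bourbaki to relate decomposability of $\gt$ to the action of $\rt_{0}$ on $\mt,$ you argue via the semisimplicity criterion for the center of the reductive algebra $\gt_{0},$ and you spell out the dimension count showing that a graded Levi factor of the centralizer is in fact a Levi factor of $\gt$ --- steps the paper leaves implicit.
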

\begin{proof} The fact that
for every $\Z$-graded Levi-Mal\v{c}ev
decomposition \eqref{q8.17} the gradings of the Levi factor
$\ses$ and of the solvable radical 
$\rt$ are as in \eqref{q8.18} and
\eqref{q8.19} follows from Lemma~\ref{lm8.16}. 
By Proposition~\ref{pp8.13} (see also \cite[Thm.2, Ch.VII, \S{5}]{Bou82})
%Th.4 pg 58~\cite{bour1}), 
a quasi-effective and therefore an  almost effective $\gt$ is decomposable if and only
if $\rhoup(\rt_{0})\,{\subseteq}\,\gl_{\K}(\mt)$ is decomposable. 
\par 
If the elements of $\rt_{0}$ are $\ad$-semisimple, then for each $A\,{\in}\,\rt_{0}$
we have $\gt\,{=}\,[A,\gt]{\oplus}\{X\,{\in}\,\gt\,{\mid}\,[A,X]\,{=}0\}.$
Hence $\kt\,{=}\,\{X\,{\in}\,\gt\,{\mid}\,[X,\rt_{0}]\,{=}\,\{0\}\}$ is a $\Z$-graded
Lie subalgebra of $\gt$ such that $\gt\,{=}\,\kt{+}\rt$ and
a $\Z$-graded semisimple Levi factor of $\kt$ is then a $\Z$-graded semisimple
Levi factor of $\gt$ satisfying \eqref{q8.20}. 
This completes the proof.
\end{proof}
\begin{cor}
 Assume that $\gt$ is an effective 
 finite dimensional $\Z$-graded Lie algebra
 with $\mt$ fundamental.
 If $\rhoup_{-1}$ is simple, 
 either $\gt$ is semisimple
 or $\nt\,{=}\,\mt$ and  $\gt\,{=}\,\mt\,{\oplus}\gt_{0}.$ 
\end{cor}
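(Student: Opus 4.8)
The plan is to reduce the statement to the behaviour of the single slot $\gt_{-1}$ as a $\gt_{0}$-module and then to run a two-case analysis. Since $\mt$ is fundamental, the three effectiveness conditions coincide, so I may treat $\gt$ as quasi-effective and invoke all the structure theorems of \S\ref{sec-quasi}. As $\rhoup_{-1}$ is simple it is in particular semisimple, and the Corollary to Proposition~\ref{pp8.13} then shows that $\gt$ is both decomposable and of reductive type. Hence Theorem~\ref{thm8.22} applies: I fix a $\Z$-graded Levi-Mal\v{c}ev decomposition $\gt=\ses\oplus\rt$ in which the radical $\rt={\sum}_{\pq\leq0}\rt_{\pq}$ carries no homogeneous term of positive degree, and, using decomposability, I choose the semisimple Levi factor $\ses$ so that $[\rt_{0},\ses]=\{0\}$.

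Because $\rt$ is a graded ideal, $\rt_{-1}=\rt\cap\gt_{-1}$ is a $\gt_{0}$-submodule of $\gt_{-1}$ for the action $\rhoup_{-1}$; as $\rhoup_{-1}$ is simple, either $\rt_{-1}=\{0\}$ or $\rt_{-1}=\gt_{-1}$, and I claim these two cases produce respectively the two alternatives of the assertion. In the first case $\gt_{-1}=\ses_{-1}\subseteq\ses$, and fundamentality propagates this to $\mt\subseteq\ses$, so that $\rt\cap\mt=\{0\}$ and, the radical having no positive part, $\rt=\rt_{0}$. Then $[\rt_{0},\gt_{-1}]\subseteq[\rt_{0},\ses]=\{0\}$ while $\rt_{0}\subseteq\gt_{+}$, so effectiveness \eqref{eq8.17} forces $\rt_{0}=\{0\}$; thus $\rt=\{0\}$ and $\gt=\ses$ is semisimple.

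In the second case $\gt_{-1}=\rt_{-1}\subseteq\rt$, and fundamentality again gives $\mt\subseteq\rt$. Part (1) of the Proposition following Lemma~\ref{lem8.2} then yields ${\sum}_{\pq\neq0}\gt_{\pq}\subseteq\nt$, whereas Lemma~\ref{lm8.16} gives $\nt_{\pq}=\{0\}$ for every $\pq>0$. Combining the two inclusions I obtain $\gt_{\pq}=\{0\}$ for $\pq>0$ and $\nt_{\pq}=\gt_{\pq}$ for $\pq<0$; since reductive type means $\nt_{0}=\{0\}$, this gives $\nt=\mt$ and therefore $\gt=\mt\oplus\gt_{0}$.

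The step I expect to be the genuine crux is the semisimple alternative: excluding a nonzero remnant $\rt_{0}$ of the radical, which by the reductive-type proposition is necessarily central in $\gt_{0}$. The delicate point is that this exclusion requires \emph{both} a Levi factor that actually centralizes $\rt_{0}$---hence decomposability, and ultimately the semisimplicity of $\rhoup_{-1}$---\emph{and} the effectiveness hypothesis transported from $\gt_{-1}$ to all of $\mt$ by fundamentality; it is precisely the simplicity of $\rhoup_{-1}$ that yields the clean dichotomy on $\rt_{-1}$ underpinning the whole argument.
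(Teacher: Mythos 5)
Your proof is correct and follows essentially the same route as the paper's: decomposability and reductive type from the semisimplicity of $\rhoup_{-1}$, the graded Levi-Mal\v{c}ev decomposition of Theorem~\ref{thm8.22} with $[\rt_{0},\ses]=\{0\}$, the dichotomy on $\rt_{-1}$ forced by simplicity, effectiveness to eliminate $\rt_{0}$ in the semisimple case, and Lemma~\ref{lm8.16} to kill the positive-degree terms in the other. The only cosmetic difference is that in the second case you invoke part (1) of the proposition following Lemma~\ref{lem8.2}, where the paper directly notes $\gt_{\pq}=\nt_{\pq}=\{0\}$ for $\pq>0$; the substance is identical.
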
 
\begin{proof} By Proposition~\ref{pp8.13} and 
Lemma~\ref{ll8.16} we know that $\gt$ is decomposable
and of reductive type. Consider a $\Z$-graded Levi-Mal\v{c}ev
decomposition \eqref{q8.17} for which 
\eqref{q8.18},
\eqref{q8.19}, \eqref{q8.20} are satisfied. Since we assumed that
$\rhoup_{-1}$ is irreducible, 
either $\mt\,{\subseteq}\,\ses,$
or $\mt\,{\subseteq}\,\rt.$  
In the first case $\rt\,{=}\,\rt_{0}\,{=}
\{0\}$ by the assumption that $\gt$ is effective, 
because $[\rt_{0},\mt]=0$ and thus $\gt\,{=}\,\ses.$ 
In the second case we have $\ses\,{\subseteq}\,\gt_{0}$
and  $\mt\,{=}\,\mt\,{\oplus}\,\gt_{0},$ because
$\gt_{\pq}\,{=}\,\nt_{\pq}\,{=}\,\{0\}$ for $\pq\,{>}\,0.$ 
\end{proof}
\begin{exam} For a positive integer $\hq{>}1,$ we consider on 
$\sfV\,{=}\,\K^{3}$ the $\Z$-gradation described by
$E_{\sfV}{=}\!\left( 
\begin{smallmatrix}
 0\\ &\hq \\ && 1
\end{smallmatrix}\right).$
Then the upper triangular nilpotent $3{\times}3$ matrices are a
$\Z$-graded Lie subalgebra 
$\gt{=}{\sum}_{\pq\in\Z}\,\gt_{\pq}$ with $\gt_{\pq}{=}\{0\}$ for
$\pq\neq{-}\hq, \hq{-}1, {-}1$ and 
\begin{equation*}
 \gt_{-\hq}=\left\{\left( 
\begin{smallmatrix}
 0 & \lambdaup & 0\\
 0 & 0 & 0\\
 0 & 0 & 0
\end{smallmatrix}\right)\right\},
\;\;
 \gt_{-1}=\left\{\left( 
\begin{smallmatrix}
 0 & 0 & \lambdaup\\
 0 & 0 & 0\\
 0 & 0 & 0
\end{smallmatrix}\right)\right\},\;\;
 \gt_{\hq-1}=\left\{\left( 
\begin{smallmatrix}
 0 & 0 & 0\\
 0 & 0 &\lambdaup\\
 0 & 0 & 0
\end{smallmatrix}\right)\right\},
\end{equation*}
 which is almost, but not quasi-effective. Moreover, $\gt$  is trivially of reductive type,
because $\gt_{0}{=}\{0\}.$ Note that 
 \eqref{q8.19} is not valid in this case. 
\end{exam}
\begin{exam}
 Let $\sfV\,{=}\,\K^{4}$ and, for a pair $\hq,\kq$ of positive integers,  with $\hq{+}\kq{>}2,$ 
 consider on $\sfV$ the $\Z$-gradation provided by
 $E_{\sfV}=\left( 
\begin{smallmatrix}
 0 \\ & \hq \\ && {-}\kq \\ &&& 1
\end{smallmatrix}
 \right).$ We consider the $\Z$-graded subalgebra $\gt$ 
 of $\glc_{\K}(\sfV)$ consisting of the
 upper triangular nilpotent $4{\times}4$ matrices. Then $\gt{=}{\sum}_{\pq\in\Z}\,\gt_{\pq}$
 with \par\centerline{$\gt_{\pq}\,{=}\,\{0\}$ for 
 $\pq\notin\{ {-}\hq, -(1{+}k),  {-}1, \kq, (\hq{-}1), (\hq{+}\kq)\}$ and} 
\begin{align*}
 \gt_{-h}=\left\{\left( 
\begin{smallmatrix}
 0 & \lambdaup & 0 & 0\\
 0 & 0 & 0 & 0 \\
  0 & 0 & 0 & 0 \\
   0 & 0 & 0 & 0 
\end{smallmatrix}\right)\right\},\;
 \gt_{-\kq-1}=\left\{\left( 
\begin{smallmatrix}
 0 & 0 & 0 & 0\\
 0 & 0 & 0 & 0 \\
  0 & 0 & 0 & \lambdaup \\
   0 & 0 & 0 & 0 
\end{smallmatrix}\right)\right\},\;
 \gt_{-1}=\left\{\left( 
\begin{smallmatrix}
 0 & 0 & 0 & \lambdaup\\
 0 & 0 & 0 & 0 \\
  0 & 0 & 0 & 0 \\
   0 & 0 & 0 & 0 
\end{smallmatrix}\right)\right\},\\
 \gt_{\kq}=\left\{\left( 
\begin{smallmatrix}
 0 & 0 & \lambdaup & 0\\
 0 & 0 & 0 & 0 \\
  0 & 0 & 0 & 0 \\
   0 & 0 & 0 & 0 
\end{smallmatrix}\right)\right\},\;
  \gt_{\hq-1}=\left\{\left( 
\begin{smallmatrix}
 0 & 0 & 0 & 0\\
 0 & 0 & 0 & \lambdaup \\
  0 & 0 & 0 & 0 \\
   0 & 0 & 0 & 0 
\end{smallmatrix}\right)\right\},\;
 \gt_{\hq+\kq}=\left\{\left( 
\begin{smallmatrix}
 0 & 0 & 0 & 0\\
 0 & 0 & \lambdaup & 0 \\
  0 & 0 & 0 & 0 \\
   0 & 0 & 0 & 0 
\end{smallmatrix}\right)\right\}.
\end{align*}
Then $\gt$ is almost, but not quasi-effective. Moreover, $\gt$ 
is of reductive type if and only if $\hq{>}1.$
Its depth is $\sup\{\hq,\kq{+}1\},$ which, if $\kq,\hq{>}1,$  is strictly smaller than  
the maximum index $\pq$ for which $\gt_{\pq}{\neq}\{0\},$ which is $\hq{+}\kq.$ 
Also in this case \eqref{q8.19} is not valid.
\end{exam}
%%%%%%%%%%%%
\begin{exam}
We will be primarily interested in finite dimensional 
quasi-effective $Z$-graded 
$\gt$ having a \textit{reductive} {structure algebra}~$\gt_{0}.$ 
Before getting into this, let us briefly discuss 
how to construct general 
finite dimensional nilpotent prolongation
. By Engel's theorem, 
a faithful representation
of $\gt$ will lead to realise $\gt$ as a Lie algebra  
of nilpotent upper triangular
$n{\times}n$ matrices 
\begin{equation*} 
\begin{pmatrix}
 0 & x_{1,2}& x_{1,3} & \hdots & x_{1,n-1}&x_{1,n}\\
 0 & 0 & x_{2,3}& \hdots & x_{2,n-1}& x_{2,n}\\
 \vdots & \ddots & \ddots & \ddots & \ddots &\vdots \\
 \vdots & \ddots & \ddots & \ddots & \ddots& \vdots \\
 0 & 0 & 0 &\hdots & 0 & x_{n-1,n}\\
 0 & 0 & 0 & \hdots & 0 & 0 
\end{pmatrix}.
\end{equation*}
\par 
A grading on $\gt$ can be described by the action of an integral  
diagonal matrix ${E}{=}\diag(k_{1},\hdots,k_{n})$:
the  
entry $x_{i,j}$ is then \textit{homogeneous} of 
degree~$k_{i}{-}k_{j}.$ \par 
Let us consider the specific example consisting of the 
Lie algebra $\gt$ of 
$8{\times}8$ upper triangular matrices
with entries in $\K$ which are antisymmetric with respect 
to the second diagonal. 
These  matrices belong to  the orthogonal algebra
of the bilinear symmetric form $\bil(\vq,\wq)
\,{=}\,\vq^{\intercal}\sfJ\wq,$ 
with  
\begin{equation*}
 \sfJ= \left( \begin{smallmatrix}
 &&&&&&& 1\\
 &&&&&& 1 \\
 &&&&& 1\\
 &&&& 1\\
 &&& 1 \\
 && 1 \\
 & 1\\
 1
\end{smallmatrix}\right)
\end{equation*}
and are indeed the largest nilpotent ideal of 
a Borel subalgebra of a split form of $\mathbf{D}_{4}.$ 
By using 
the gradation with ${E}=\diag(-1,-2,-1,0,0,1,2,1),$ 
we obtain on $\gt$ a structure of  $12$-dimensional nilpotent 
fundamental graded Lie algebra  
\begin{gather*} \gt\,{=}\,{\sum}_{\pq=-3}^{1}\gt_{\pq},
\;\;\;\text{with}\\
\dim_{\K}(\gt_{-3}){=}2, \; \dim_{\K}(\gt_{-2}){=}3,\;
 \dim_{\K}(\gt_{-1}){=}5,\; \dim_{\K}(\gt_{0}){=}1,\;
\dim_{\K}(\gt_{1}){=}1.
\end{gather*}
\par
The nonzero elements of $\gt_{0}$ have rank $2$ 
on $\gt_{-1}$ and therefore the maximal prolongation of type
$\gt_{0}$ of $\mt{=}{\sum}_{\pq=-3}^{-1}\gt_{\pq}$ 
is finite dimensional
(see e.g. \cite{MMN2018}). 
We obtain a solvable prolongation of $\gt$ by adding to $\gt_{0}$ 
the $8{\times}8$ diagonal matrices which are
antisymmetric with respect to the second diagonal. 
\end{exam}
%%%%%%%%%%%%%%%%%%%%%
\section{Semisimple prolongations}\label{sect9}
By Theorem~\ref{thm8.22} all positive degree summands of an effective
$\Z$-graded finite dimensional Lie algebra $\gt$ 
of reductive type 
are contained in its graded semisimple Levi factor. 
It is therefore of some interest 
investigating the way 
$\Z$-gradations of semisimple Lie algebras 
relate to (maximal) prolongations of fundamental
graded Lie algebras. 
\par\smallskip
Let $\gt={\sum}_{\pq={-}\muup}^\muup\gt_{\pq}$ 
be a finite dimensional $\Z$-graded semisimple Lie algebra over $\K.$ 
We say that its gradation is \textit{not trivial} if $\muup{>}0$ and
$\gt_{\muup}{\neq}\{0\}.$ Moreover,  $\gt$ is effective iff none of 
its nontrivial ideals is contained in
$\gt_0.$  
%%%
Its Lie subalgebra $\gt_0$ 
is reductive, since the restriction % to $\gt_0$  
 of the Killing form of $\gt$ to $\gt_{0}$
 is nondegenerate.
Set $ \mt={\sum}_{\pq={-}\muup}^{{-}1}\gt_{\pq}$ and 
$V{=}\gt_{{-}1}.$ 
Assuming 
that $\mt$ is fundamental,     
$\gt$ is an effective prolongation 
of $\mt$ if and only if the action of
 $\gt_0$ on $V$ is faithful: in this case $\gt_0$ 
 can be identified with a Lie subalgebra of $\gl_{\K}(V).$ 
The derived algebra $%\cD(\gt_0){=}
[\gt_0,\gt_0]$ %of $\gt_0$ 
is semisimple and $V$ decomposes into a direct sum
 $V=V_1{\oplus}\cdots{\oplus}V_k$ 
 of its irreducible representations.  
 For each $1{\leq}i{\leq}k,$ 
 the set $\Aa_i$ of $\K$-endomorphisms of $V_i$ which commute with 
the action of $[\gt_0,\gt_0]$ is, by Schur's lemma, 
a division $\K$-algebra. The elements of $\Aa_i$ 
uniquely extend to derivations 
 of $\gt$ vanishing on $V_{\!{j}}$ for $j{\neq}i.$ 
 In particular, the identity of $\Aa_i$ yields a projection
$\etaup_{\,i}:V{\to}V_i.$     
 Since every derivation 
 of a semisimple finite dimensional Lie algebra is inner, 
we can consider the $\Aa_i$'s as  subalgebras of~$\gt_0.$

\begin{lem} Assume that $\gt$ is finite dimensional and
semisimple and that 
the action of $\gt_0$ on $V$ is faithful. Then the center of 
 $\Aa_1\oplus\cdots\oplus\Aa_k$ is the center of $\gt_0.$
\qed
\end{lem}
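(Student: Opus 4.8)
The plan is to carry out the whole argument inside the faithful action on $V,$ identifying $\gt_0$ with its image in $\gl_\K(V),$ so that commutation computed on $V$ is the same as commutation in $\gt_0.$ Write $\gt_0\,{=}\,[\gt_0,\gt_0]\,{\oplus}\,\zt_0$ for the reductive decomposition, where $\zt_0$ denotes the centre of $\gt_0,$ and set $\Aa\,{=}\,\Aa_1\,{\oplus}\cdots{\oplus}\,\Aa_k\,{\subseteq}\,\gt_0.$ Since distinct summands $\Aa_i$ act on distinct blocks $V_i$ and hence commute, the centre $\zt(\Aa)$ is the direct sum of the (associative) centres of the $\Aa_i.$ I would then prove $\zt_0\,{=}\,\zt(\Aa)$ by two inclusions, the entire content being concentrated in the single claim that $\zt_0\,{\subseteq}\,\Aa.$ The decisive structural input, supplied by the setup, is that each projection $\etaup_i\colon V\to V_i,$ being the identity element of $\Aa_i,$ is an inner derivation of the semisimple $\gt$ of degree $0$ and therefore already lies in $\gt_0.$

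To establish $\zt_0\,{\subseteq}\,\Aa$ I would take $z\,{\in}\,\zt_0.$ As $z$ commutes with every element of $\gt_0,$ in particular with each $\etaup_i\,{\in}\,\gt_0,$ and $\etaup_i$ is the idempotent with image $V_i,$ the one-line computation $\etaup_i(zv)\,{=}\,z\,\etaup_i(v)\,{=}\,zv$ for $v\,{\in}\,V_i$ shows that $z$ preserves every $V_i.$ Moreover $z$ commutes with $[\gt_0,\gt_0],$ so each restriction $z|_{V_i}$ is an endomorphism of $V_i$ commuting with the action of $[\gt_0,\gt_0],$ that is, $z|_{V_i}\,{\in}\,\Aa_i$ by the definition of $\Aa_i$; hence $z\,{\in}\,\Aa.$ This is the crux of the proof and the step one must get right: without the membership $\etaup_i\,{\in}\,\gt_0$ there would be no reason for a general central element to respect the decomposition $V\,{=}\,V_1\,{\oplus}\cdots{\oplus}\,V_k$ when some components are isomorphic as $[\gt_0,\gt_0]$-modules, which is exactly where a naive argument would break down.

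The two inclusions are then formal. For $\zt_0\,{\subseteq}\,\zt(\Aa),$ any $z\,{\in}\,\zt_0$ lies in $\Aa$ by the claim and, being central in $\gt_0\,{\supseteq}\,\Aa,$ commutes with all of $\Aa,$ so $z\,{\in}\,\zt(\Aa).$ Conversely, given $w\,{\in}\,\zt(\Aa),$ each block $w|_{V_i}\,{\in}\,\Aa_i$ commutes with $[\gt_0,\gt_0]$ by the very definition of $\Aa_i,$ whence $w$ commutes with $[\gt_0,\gt_0]$; and since $\zt_0\,{\subseteq}\,\Aa,$ the element $w$ also commutes with $\zt_0$; therefore $w$ commutes with $[\gt_0,\gt_0]\,{\oplus}\,\zt_0\,{=}\,\gt_0,$ and as $w\,{\in}\,\gt_0$ we conclude $w\,{\in}\,\zt_0.$ Throughout, faithfulness of the action guarantees that these commutation relations are genuine identities in $\gt_0,$ and the inner-derivation property of the semisimple $\gt$ is precisely what places $\Aa$ and the projections $\etaup_i$ inside $\gt_0$ to begin with.
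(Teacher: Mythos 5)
Your proof is correct, and it is essentially the argument the paper intends: the paper states this lemma without proof, and your write-up supplies precisely the details implicit in its setup — the projections $\etaup_i$ lie in $\gt_{0}$ because the corresponding degree-zero derivations of the semisimple $\gt$ are inner, which is what forces $\zt(\gt_{0})\subseteq\Aa_{1}\oplus\cdots\oplus\Aa_{k},$ and the two inclusions then follow from the reductive decomposition $\gt_{0}=[\gt_{0},\gt_{0}]\oplus\zt(\gt_{0})$ together with Schur's lemma. You also correctly isolated the only delicate point, namely that a central element must preserve each block $V_{i}$ even when some of the $V_{i}$ are isomorphic as $[\gt_{0},\gt_{0}]$-modules, which is exactly what commutation with the $\etaup_{i}\in\gt_{0}$ guarantees.
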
 
\begin{rmk} The commutant  
$\Aa_i$ 
may contain a simple Lie algebra over~$\K,$ that will 
contribute as a summand to $[\gt_0,\gt_0].$ 
For instance, when $\K$ is the field $\R$ of real numbers,
the possible $\Aa_i$ are $\R$ itself, 
or $\C,$ or the non commutative real division algebra 
$\Hb$ of quaternions,
and $\Hb{\simeq}\ot(3){\oplus}\R.$  \par
A simple instance of this situation is the simple Lie algebra
$\slt_2(\Hb),$ with $\gt_0{=}
\ot(3){\oplus}\ot(3){\oplus}\R{\simeq}\ot(4){\oplus}\R{\simeq}
\mathfrak{co}(4),$ with the standard action 
on $V{=}\R^4{\simeq}\Hb,$ where $\slt_2(\Hb)$ can be viewed
as a maximal prolongation of type $\mathfrak{co}(4)$ of~$V.$  
This presentation corresponds to the 
\textit{cross marked Satake
diagram} (see e.g.~\cite{AMN06})
\begin{equation*}\vspace{-15pt}
  \xymatrix@R=-.3pc{ 
\!\!\medbullet\!\!\!
\ar@{-}[r]&\!\!\medcirc\!\! \ar@{-}[r]&   \!\!\medbullet\\
&{\,\times}}
\end{equation*}
\par\medskip
\end{rmk}
\par\medskip
Semisimple and maximal prolongations  are 
related by the following proposition (see e.g. \cite{MN97,Tan67}).
\begin{prop} \label{prop-7.3}
Assume that $\gt$ is semisimple and  that 
the action of $\gt_0$ on $V$ is faithful.
Then $\gt$ is maximal
among the finite dimensional effective prolongations of type 
$\gl_{\K}(V)$ of $\mt.$ 
\end{prop}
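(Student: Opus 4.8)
The plan is to establish maximality in the strong form: I take an arbitrary finite dimensional effective prolongation $\dgt{=}{\sum}_{\pq}\dgt_{\pq}$ of $\mt$ that contains $\gt$ as a graded subalgebra, and I prove $\dgt{=}\gt$. Being a prolongation of $\mt$, the algebra $\dgt$ satisfies $\dgt_{\pq}{=}\gt_{\pq}{=}\mt_{\pq}$ for every $\pq{<}0$; in particular $\dgt_{-1}{=}V$. Moreover, since $\mt$ is fundamental, the effectiveness of $\dgt$ means---as recalled just before the statement---that $\dgt_{0}$ acts faithfully on $V$, so $\dgt_{0}$ embeds into $\gl_{\K}(V)$ and contains $\gt_{0}$.

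The decisive ingredient is the semisimplicity of $\gt$. Viewing $\dgt$ as a module over $\gt$ through the adjoint action of $\dgt$, I note that this action is compatible with the $\Z$-grading, in the sense that $\ad_{\dgt}(\gt_{\rq})$ maps $\dgt_{\pq}$ into $\dgt_{\pq+\rq}$. As $\gt$ is semisimple and $\dgt$ is finite dimensional, Weyl's theorem furnishes a $\gt$-linear projection $\dgt{\to}\gt$; decomposing it into homogeneous components according to the degree shift, its degree-preserving component is again a $\gt$-linear projection onto $\gt$, so its kernel $\mathfrak{c}$ is a graded $\gt$-submodule with $\dgt{=}\gt{\oplus}\mathfrak{c}$ and $[\gt,\mathfrak{c}]{\subseteq}\mathfrak{c}$. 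Writing $\mathfrak{c}{=}{\sum}_{\pq}\mathfrak{c}_{\pq}$ with $\mathfrak{c}_{\pq}{=}\mathfrak{c}{\cap}\dgt_{\pq}$, the equalities $\dgt_{\pq}{=}\gt_{\pq}$ for $\pq{<}0$ give $\mathfrak{c}_{\pq}{=}\{0\}$ for all $\pq{<}0$, so that $\mathfrak{c}{=}{\sum}_{\pq{\geq}0}\mathfrak{c}_{\pq}$.

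To finish, suppose $\mathfrak{c}{\neq}\{0\}$ and let $\kq{\geq}0$ be minimal with $\mathfrak{c}_{\kq}{\neq}\{0\}$; fix $0{\neq}X{\in}\mathfrak{c}_{\kq}$. Since $\mathfrak{c}$ is a $\gt$-submodule and $\gt_{-1}{\subseteq}\gt$ has degree $-1$, we obtain $[X,\gt_{-1}]{\subseteq}\mathfrak{c}_{\kq-1}{=}\{0\}$. If $\kq{\geq}1$, this contradicts the effectiveness of $\dgt$, which by \eqref{eq8.17} forbids a nonzero element of $\dgt_{+}$ from annihilating $\gt_{-1}$. If $\kq{=}0$, then $X{\in}\dgt_{0}$ acts on $V{=}\gt_{-1}$ by the bracket as an element of $\gl_{\K}(V)$, and $[X,\gt_{-1}]{=}\{0\}$ forces $X{=}0$ by the faithfulness noted above. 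In both cases we reach a contradiction, whence $\mathfrak{c}{=}\{0\}$ and $\dgt{=}\gt$. This shows that $\gt$ admits no proper finite dimensional effective prolongation of $\mt$, and is therefore maximal.

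I expect the substantive point to be concentrated entirely in the use of Weyl's theorem: the semisimplicity of $\gt$ is exactly what guarantees the invariant complement $\mathfrak{c}$, and without it the positive part of a prolongation need not be forced to coincide with that of $\gt$. The remaining care is purely organisational---checking that the complement may be taken graded (immediate from extracting the degree-preserving component of the projection) and correctly invoking the two manifestations of effectiveness, namely \eqref{eq8.17} in positive degree and the faithful action of $\dgt_{0}$ on $V$ in degree $0$.
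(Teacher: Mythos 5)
Your proof is correct and follows essentially the same route as the paper's: complete reducibility of finite dimensional representations of the semisimple $\gt$ (Weyl's theorem) yields a $\Z$-graded $\gt$-invariant complement of $\gt$ in the prolongation, which is necessarily contained in the nonnegative degrees and is then killed by effectiveness via a lowest-degree argument. The only differences are organisational: you spell out how to extract the degree-preserving component of the equivariant projection, and you treat degree $0$ separately through the faithfulness of the action on $V$, whereas the paper's condition \eqref{eq8.17} already covers all of $\gt_{+}$ (degree $0$ included) in one stroke.
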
 
\begin{proof}
 Indeed, an effective
  finite dimensional  
  prolongation $\G{=}{\sum}_{\pq{\in}\Z}\G_{\pq}$ 
 of $\mt$ containing $\gt$ is a $\gt$-module. 
 Since the finite dimensional linear representations of
 a semisimple $\gt$ are completely reducible,
 $\gt$ has in $\G$ a complementary $\Z$-graded $\gt$-module~$\G'.$  
 The conditions that $[\G',\mt]{\subset}\G'$ and that $\G'$ 
 is contained in $\G_{+}{=}{\sum}_{\pq{\geq}0}\G_{\pq}$ 
 implies by effectiveness that $\G'=\{0\}.$  
\end{proof}
\begin{exam}
If $\mt\,{=}\,\gt_{-1}\,{=}\,\K^{n},$ then $\slt_{n+1}(\K)$ is the 
unique finite dimensional  
semisimple prolongation  
of $\mt.$ However, the maximal prolongation of type
$\gl_{n}(\K)$ of $\mt$ is the infinite dimensional 
graded Lie algebra $\Xx(\K^{n})$
of vector fields with polynomial coefficients in $\K^{n}$
(see e.g. \cite[\S{3}]{MMN2018}).
\end{exam}
In the rest of this section, we will exhibit
structures of maximal $\Z$-graded prolongations
on semi\-sim\-ple Lie algebras,
that we think could be %interesting 
of some interest
in geometry and physics. \par
% in geometrical and physical applications. \par 
Let us explain the pattern of our constructions.
We start from a semismiple Lie algebra $\Li_0$
and fix a
faithful finite dimensional $\Li_0$-module $V,$ 
identifying $\Li_0$
with a Lie subalgebra of $\gl_{\K}(V).$ 
The structure algebra $\gt_0$ is obtained by adding to
$\Li_0$ its commutant in $\gl_{\K}(V).$ 
Then $\gt_0$ is 
reductive, 
with $\Li_0\,{\subseteq}\,[\gt_0,\gt_0].$ 
The derived algebra $\Li{\coloneqq}[\gt_0,\gt_0]$ is the
semisimple ideal of $\gt_0.$  
%To obtain a suitable fundamental graded Lie algebra 
%$\mt{=}{\sum}_{\pq<0}\gt_{\pq}$ 
%with $\gt_{-1}{=}\,V,$
%we first take 
%the exterior power $\Lambda^2(V)$: it 
The exterior power $\Lambda^2(V)$
is  an $\Li$-module and  we 
can choose $\gt_{{-}2}$ equal to 
any $\Li$-submodule of $\Lambda^2(V).$ 
Likewise, all homogeneous summands in 
 the natural gradation of the free Lie algebra
$\ft(V){=}{\sum}_{\pq<0}\ft_{\pq}(V)$ of $V$ 
(cf. \cite{MMN2018,Reu93,Warhurst2007})
are $\Li$-modules
and we can choose 
$\gt_{{-}3}$ as an $\Li$-submodule of 
$(V{\otimes}\gt_{{-}2})\cap\ft_{{-}3}(V),$ and next, 
by recurrence, $\gt_{{-}\pq{-}1}$ as
an $\Li$-submodule of 
the intersection
$(V{\otimes}\gt_{{-}\pq})\cap\ft_{{-}\pq{-}1}(V).$ 
In this way we build up the general fundamental graded Lie algebra
$\mt$ 
with $\gt_{{-}1}{=}V$ admitting a maximal prolongation 
with 
structure subalgebra $\Li.$
\par
A randomly constructed $\mt$ may have, in general, 
no $\Z$-graded prolongation of positive heigth.
For this, we need 
to make specific choices of~$\mt.$ 
\par 
Let $\K$ be the field $\C$ of complex numbers. Fix a
Cartan subalgebra $\hg$ 
of $\Li$ and let $\Rad,$ $\Wi$ be the corresponding root
system and weight lattice, which  are contained 
in a Euclidean space $\R^{\ell}.$ 
For an $\omegaup\,{\in}\,\Wi,$ we denote by $\Wi(\omegaup)$
the set of weights of an irreducible $\Li$-module with
extremal weigth $\omegaup.$ \par 
As usual, we denote by $(\alphaup|\betaup)$ 
the standard scalar product of 
$\alphaup,\betaup\in\R^\ell$ and, for a nonzero element 
$\alphaup$ of $\R^\ell$ we set
$\alphaup^\vee=2\alphaup{/}\|\alphaup\|^2$ 
and 
$\langle\alphaup|\betaup\rangle=(\alphaup|\betaup^\vee).$ 
Then,
by choosing a system $\alphaup_1,\hdots,\alphaup_{\ell}$ 
of simple roots in $\Rad,$
we associate to $\Li$ the Cartan matrix $A{=}
(\langle\alphaup_i\,|\,\alphaup_j\rangle)_{1{\leq}i,j{\leq}\ell}.$ 
\par 
Our next step is to
construct, starting from  the data of 
$\Li$ and $V$ a \textit{generalised Cartan matrix} 
$\tilde{A}$ extending $A$
(see \cite{kac_1990, Moody68}). \par
The matrix $\tilde{A}$ will be associated to an
isometric embedding of $\R^{\ell}$ 
into an orthogonal 
space $(\R^{\ell{+}k},\bil)$ ($k$ is the number
of irreducible $\Li$-modules $V_i$ in $V$). 
Here $\bil$ is a symmetric bilinear form on $\R^{\ell{+}k},$
which restricts to the Euclidean scalar product on $\R^\ell.$
For each 
$V_i,$ 
let $\omegaup_i$ be its \textit{lowest} weight (i.e. 
the one with $(\omegaup_i|\alphaup_j){\leq}0$
for all $j{=}1,\hdots,\ell$) 
and define a new \textit{simple root}
$\alphaup_{\ell{+}i}=\omegaup_i{+}\epi_i$ by adding to $\omegaup_i$ a \textit{marker} 
$\epi_i$ from $\mathrm{E}{=}\{\wq{\in}\R^{\ell{+}k}
\mid \bil(\vq,\wq){=}0,\;\forall\vq{\in}\R^\ell\}.$ 
We choose linearly independent $\epi_1,\hdots,\epi_k$ in $\mathrm{E}$ such that
$\bil(\alphaup_i,\alphaup_i){\neq}0$
for all $1{\leq}i{\leq}\ell{+}k$ and set
\begin{equation*}
 \tilde{A}=
 (a_{i,j})_{1{\leq}i,j{\leq}\ell{+}k}
 =\left(\dfrac{2{\bil}(\alphaup_i,\alphaup_j)}{{\bil}(\alphaup_i,\alphaup_i)}\right)_{1{\leq}i,j{\leq}\ell{+}k}.
\end{equation*}
Then $A$ is the submatrix 
of the first $\ell$ lines and columns of $\tilde{A}.$ 
The elements $a_{i,j}$ with $i{\leq}\ell$ are 
determined
by $\Li$ and
$V,$  because $a_{i,j}{=}\langle\alphaup_i\,|\,\omegaup_{j{-}\ell}\rangle$
if $\ell{<}j{\leq}\ell{+}k.$ The others
depend on the choice of $\bil,$ 
for which we need to keep the constrain that
$\tilde{A}$ be a generalised Cartan matrix,   
having  nonpositive integers off the main diagonal.
The restriction of 
$\bil$  to $\mathrm{E}$ must satisfy
%has to be chosen on $\R^k$ in such a way that 
\begin{equation} \label{eq7.1}
\begin{cases}
\|\alphaup_{\ell{+}i}\|^2{=} \|\omegaup_i\|^2{+}\bil(\epi_i,\epi_i)>0,&\text{for $1{\leq}i{\leq}k,$}
\\[5pt]
\dfrac{2(\omegaup_i\,|\,\alphaup_j)}{\|\omegaup_i\|^2{+}\bil(\epi_i,\epi_i)}\in\Z, & \text{for
$1{\leq}i{\leq}k,$ $1{\leq}j{\leq}\ell,$}\\[14pt]
\dfrac{2(\omegaup_i\,|\,\omegaup_j){+}2\bil(\epi_i,\epi_j)}{\|\omegaup_i\|^2{+}\bil(\epi_i,\epi_i)}\in\Z,
&\text{for $1{\leq}i{\neq}j{\leq}k,$}\\[12pt]
{(\omegaup_i\,|\,\omegaup_j){+}\bil(\epi_i,\epi_j)}{\leq}0,
&\text{for $1{\leq}i{\neq}j{\leq}k.$}
 \end{cases}
\end{equation}
Then $\gt_0{=}\Li,$ $\gt_{{-}1}{=}V,$ for a $\Z$-gradation
of the
Kac-Moody algebra $\gt$ 
of $\tilde{A},$ which
is finite dimensional  iff $\bil$
is positive definite (see e.g. \cite{kac_1990}).  
In general, we may consider choices for which $\bil$ has maximal
positive/non-negative inertia.
Note that $\bil$ is completely determined by the values
of the entries of
\eqref{eq7.1}. When $\gt$ is infinite dimensional, the maximal 
effective prolongation 
of type $\gt_0$ of
$\mt$ is in general strictly larger than $\gt.$
\begin{rmk}[Simple prolongations] \label{rmk7.4}
% We keep the notation introduced above. 
When $V$ is irreducible,
with extremal weight $\omegaup,$ we need to add a unique marker 
$\epi.$ Then
conditions \eqref{eq7.1} reduce to the first two and
$\tilde{A}$ is completely determined 
by the value of $\bil(\epi,\epi).$ 
We require that 
\begin{equation}\label{eq7.2}
 \|\omegaup\|^2{+}\bil(\epi,\epi){>}0
 \;\;\text{and}\;\; \dfrac{2(\alphaup|\omegaup)}{ \|\omegaup\|^2
 {+}\bil(\epi,\epi)}
 \in\Z,\;\;\forall\alphaup{\in}\Rad.
\end{equation}
In particular, to obtain a finite dimensional semisimple 
$\gt,$ 
we need that 
$\|\omegaup\|^2<\min\{2|(\omegaup|\alphaup)|>0
\mid\alphaup{\in}\Rad\}.$ 
\end{rmk}
To discuss the case where $\K{=}\R,$
we observe that the complexification of the 
effective prolongation 
of a real fundamental
graded Lie algebra $\mt$ is the complex 
effective prolongation 
of the complexification of $\mt.$ 
In this way we reduce to the complex case,
by taking into account the way real representations
lift to complex ones (see e.g. \cite[Ch.IX,Appendix]{Bou82}).
\begin{rmk}  
Semisimple effective prolongations 
$\gt$ can be read off 
their associated diagrams of Dynkin/Satake.  
In particular, 
for a Satake diagram 
$\Sigma$ (see e.g. \cite{AMN06, Ara62})
of a  real semisimple effective prolongation 
$\gt$  
we require that
\begin{itemize}
 \item[(i)] the eigenspaces corresponding to  fundamental roots of $\Sigma$ are homogeneous of degree
 either $0$ or $1;$
\item[(ii)] compact roots have degree $0$ and those  joined by an
arrow have the same degree;  
\item[(iii)] degree $0$ roots 
are the nodes of the Satake diagram of $[\gt_0,\gt_0].$ 
\end{itemize}
Crosses can be added under the nodes of a Satake diagram to indicate the roots of positive degree.
Complex type representations can be associated to couples of \textit{positive} roots joined by
an arrow (see e.g.  \cite{MN98}).\end{rmk}
\par  
\smallskip

Exceptional Lie algebras 
naturally arise as maximal effective prolongations 
of fundamental graded Lie algebras with non exceptional
structure algebras. 
These constructions are related to the investigation of 
 their maximal rank reductive subalgebras
(see e.g. \cite{adams1996lectures,golubitsky1971}). 

\subsection{Structure algebras of type $\mathbf{A}$} To describe the root system of
$\slt_n(\C),$ it is convenient to use
an orthonormal basis $\eq_1,\hdots,\eq_n$ of $\R^n$ and set  
\begin{equation*}
 \Rad=\{{\pm}(\eq_i{-}\eq_j)\mid 1{\leq}i{<}j{\leq}n\}.
\end{equation*}
The Dynkin diagram is
\begin{equation*}
  \xymatrix@R=-.3pc{  \alphaup_1 & \alphaup_2 &  &\alphaup_{n{-}2} &\alphaup_{n{-}1} \\
\!\!\medcirc\!\!\!
\ar@{-}[r]&\!\!\medcirc\!\! \ar@{-}[r]& \cdots \ar@{-}[r]& \!\!\medcirc\!\!\ar@{-}[r]  &\!\!\medcirc}
\end{equation*}
with simple roots  
\begin{equation*}
 \alphaup_i=\eq_{i}{-}\eq_{i{+}1},\;\;\text{for}\;\; 1{\leq}i{\leq}n{-}1.
\end{equation*}
Set  $\eq_0{=}\eq_1{+}\cdots{+}\eq_n.$ 
The simple positive weights 
in $\langle\eq_i{-}\eq_j\,
{\mid} 1{\leq}i{<}j{\leq}n\rangle
{\simeq}\R^{n{-}1}$ 
are 
$\omegaup_{k}{=}{\sum}_{j{=}1}^{k}\eq_{i}{-}\frac{k}{n}\eq_0,$ 
for $1{\leq}i{\leq}n{-}1,$
with $\omegaup_k$ corresponding to the irreducible representation 
$\Lambda^k(\C^n).$ 
We have 
\begin{align*} (\omegaup_j|\omegaup_k)
=j{\cdot}\left(1-\frac{k}{n}\right)>0,\;\;\forall 1{\leq}j{\leq}k{<}n
\end{align*}
and hence, for a dominant weight ($\aq_j{\geq}0$ for all $j$),
\begin{align*}
 \left\| {\sum}_{j{=}1}^{n{-}1}\aq_j\omegaup_j\right\|^2
 ={\sum}_{j{=}1}j\aq_j\left[\aq_j\left(1{-}\frac{j}{n}\right)
 {+}2{\sum}_{k{=}j{+}1}^{n{-}1}\aq_k
 \left(1{-}\frac{k}{n}\right)\right]\\
 > {\sum}_{j{=}1}^{n{-}1}\aq_j^2{\cdot} j{\cdot}
 \left(1-\frac{j}{n}\right).
\end{align*}
By Remark~\ref{rmk7.4}, to find a semisimple
effective fundamental graded Lie algebra 
 $\gt$ with $\gt_0{=}\gl_n(\C)$ and 
$\gt_{{-}1}$ equal to 
an irreducible $\slt_n(\C)$-module $V{=}V_{\omegaup},$ we need that 
$\omegaup{\sim}\omegaup_i$ with 
\begin{itemize}
\item either $i\,{=}\,1, n{-}1,$ and any $n{\geq}{2},$ 
\item or
 $i{=}2,n{-}2$ and $n{\geq}{4},$ 
\item
or 
 $i{=}3,n{-}3$ and $6{\leq}n{\leq}9.$  
\end{itemize}
A \textit{marker}  
could be taken of the form $\epi{=}\cq{\cdot}\eq_{\,0}.$ 

\subsubsection{$\gt_{{-}1}{\simeq}V_{\omegaup_1}:$ construction of 
$\mathbf{B}_n$ and $\mathbf{G}_2$}
\label{sub7.1.1}
Let  $\omegaup{=}\eq_n{-}\tfrac{1}{n}\eq_0{\sim}\omegaup_1.$ 
By Remark~\ref{rmk7.4}, 
the possible choices for
$\epi$ are 
\begin{equation*}
 \epi = 
\begin{cases}
\tfrac{\sqrt{1{+}n}}{n}\eq_0 \Longrightarrow \|\omegaup{+}\epi\|^2
{=}2,\\
 \tfrac{1}{n}\eq_0 \Longrightarrow \|\omegaup{+}\epi\|^2{=}1,\\
 \tfrac{1}{2\sqrt{3}}(\eq_1{+}\eq_2) \Longrightarrow \|\omegaup
 {+}\epi\|^2{=}\tfrac{2}{3}, & \text{if $n{=}2.$}
\end{cases}
\end{equation*}
The first corresponds to an $\mt$ of kind $1,$ the second to an 
$\mt$ of kind $2,$ the third
to an $\mt$ of kind $3.$\par 
The kind one abelian Lie algebra $\mt{=}V_{\omegaup_1}{\simeq}\C^n$ 
has the simple effective prolongation 
$\slt_{n{+}1}(\C),$ but has 
the infinite dimensional maximal effective prolongation
$\Xx(V)$ (see e.g. \cite[\S{3}]{MMN2018}).\par
Since
$\Lambda^2(V_{\omegaup_1})$
is an
irreducible $\slt_n(\C)$-module, with dominant weight $\omegaup_2,$ 
the only possible choice for an $\mt$ of kind two is
to take $\mt\,{=}\,\C^n{\oplus}\Lambda^2(\C^n).$
Indeed, with $\epi=\tfrac{1}{n}\eq_0,$ we have 
\begin{equation*}
 \|\,\omegaup_1{+}\epi\,\|^2{=}\|\,\eq_1\,\|^2{=}1,
 \quad \|\,\omegaup_2{+}2\epi\,\|^2=\|\eq_1{+}\eq_2\|^2=2.
\end{equation*}
 By \cite[Cor.4.11]{MMN2018}
% Corollary~\ref{cor-tan-4-11} 
when $n{\leq}2,$ $\dim(\gt_{{-}2}){\leq}1$ and thus the
maximal effective prolongation
of this $\mt$ is infinite dimensional.
It is finite dimensional by %Theorem~\ref{thm-tan-4-8}  
\cite[Thm.4.8]{MMN2018}
when
$n{\geq}3,$  
and in fact is
isomorphic to $\ot(2n{+}1,\C)$ (see e.g.~\cite{Yam93}).
Indeed, setting 
\begin{equation*} 
\begin{cases}
 \Rad_{\;{-}2}=\{\omegaup+2\epi\mid \omegaup{\in}\pes(\omegaup_2)\}=\{\eq_i{+}\eq_j\mid 1{\leq}i{<}j{\leq}n\},\\
 \Rad_{\;{-}1}=\{\omegaup+\epi\mid 
 \omegaup{\in}\pes(\omegaup_1)\}=\{\eq_i\mid 1{\leq}i{\leq}n\},\\ 
 \Rad_{\;0}= \{\eq_i{-}\eq_j\mid 1{\leq}i{\neq}j{\leq}n\},\\
 \Rad_{\;{}1}=\{\omegaup-\epi\mid \omegaup{\in}
 \pes(\omegaup_{n{-}1})\}=\{{-}\eq_i\mid 1{\leq}i{\leq}n\},\\  
  \Rad_{\;2}=\{\omegaup-2\epi\mid \omegaup
  {\in}\pes(\omegaup_{n{-}2})\}=\{{-}\eq_i{-}\eq_j
  \mid 1{\leq}i{<}j{\leq}n\},
\end{cases}
\end{equation*}
 the union $\Rad={\bigcup}_{\pq{=}{-}2}^2\Rad_{\;\pq}$ is a root system of type $\mathbf{B}_n$ 
 and the prolongation 
\begin{equation*}
 \gt={\sum}_{\pq{=}{-}2}^2\gt_{\pq}\simeq\ot(2n{+}1,\C),\;\;\text{with}\;\; \gt_0
 {=}\hg_n{\oplus}\langle\Rad_{\;0}\rangle,\;\;
 \gt_{\pq}=\langle\Rad_{\;\pq}\rangle,
\end{equation*}
where $\hg_n$ is an $n$-dimensional Cartan subalgebra, is by Proposition~\ref{prop-7.3},
a maximal effective prolongation 
of $\mt,$ because
$\ot(2n{+}1,\C)$ is simple. 
We  got indeed
\begin{equation*}
 \Rad=\{{\pm}\eq_i\mid 1{\leq}i{\leq}n\}
 \cup\{{\pm}\eq_i{\pm}\eq_j\mid 1{\leq}i{<}j{\leq}n\}
\end{equation*}
for an orthonormal basis $\eq_1,\hdots,\eq_n$ of $\R^n.$ The grading of $\gt$ could also have
been obtained from
the cross marked Dynkin diagram 
 (see \cite{Bou68})
\begin{equation*}
  \xymatrix@R=-.3pc{   \alphaup_1 &  &\alphaup_{n-1} &\alphaup_{n{-}1} &
  \alphaup_n\\
%\!\!\medcirc\!\!\!
%\ar@{<=}[r]&
\!\!\medcirc\!\! \ar@{-}[r]& \cdots \ar@{-}[r]& \!\!\medcirc\!\!\ar@{-}[r]  &\!\!\medcirc\!\!
\ar@{=>}[r]&\!\!\medcirc\\
&&&& \times}
\end{equation*}
with 
\begin{equation*}
  \alphaup_i=\eq_{i}-{\eq}_{i{-}1},\;\text{for}\, 1{\leq}i{\leq}n{-}1,\;\;\alphaup_n=\eq_n.
\end{equation*}
by setting $\deg(\alphaup_n){=}1,$ $\deg(\alphaup_i){=}0$ for $1{\leq}i{\leq}n{-}1.$ 
\par \smallskip
For $n{=}2,$ the summands $\ft_{{-}\pq}(\C^2)$ (for $\pq{\geq}1$)
are all irreducible and isomorphic either to the trivial one-dimensional representation on $\Lambda^2(\C^2)\simeq\C,$ 
for $\pq$ even, or to the two-dimensional standard representation $\Lambda^1(\C^2){=}\C^2$ for
$\pq$ odd. Therefore
we will consider
the fundamental graded Lie algebra of the third kind
$\mt=\C^2{\oplus}\Lambda^2(\C^2){\oplus}\ft_{3}(\C^2),$
with $\ft_{3}(\C^2){\simeq}\C^2.$ 
Let
$\omegaup{=}\tfrac{1}{2}(\eq_2{-}\eq_1)$ 
and take
the marker $\epi=\tfrac{1}{2\sqrt{3}}(\eq_1{+}\eq_2).$ Then 
\begin{equation*}
 \|\,\omegaup{+}\epi\,\|^2=\frac{2}{3},\;\; \|\,2\epi\,\|^2
 =\frac{2}{3},\;\; \|\,\omegaup{+}3\epi\,\|^2=2.
\end{equation*}
By setting 
\begin{equation*}
 \Rad={\bigcup}_{\pq{=}{-}3}^3\Rad_{\;\pq},\;\;\;\text{with}\;\;\; 
\begin{cases}
 \Rad_{\;0}=\{{\pm}2\omegaup,\},\\
 \Rad_{\;{\pm}1}=\{{\mp}\epi+\omegaup,{\mp}\epi-\omegaup\},\\
 \Rad_{\;{\pm}2}=\{{\mp}2\epi\},\\
 \Rad_{\;{\pm}3}=\{{\mp}3\epi{+}\omegaup,{\mp}3\epi{-}\omegaup\},
\end{cases}
\end{equation*}
we obtain a root system of type $\mathbf{G}_2.$ 
With a $2$-dimensional Cartan subalgebra $\hg_2,$
the graded Lie algebra 
\begin{equation*}
 {\sum}_{\pq{=}{-}3}^3\gt_{\pq},\;\; \text{with}\;\; 
 \gt_0{=}\hg_2{\oplus}\langle\Rad_{\;0}\rangle \;\text{and}\;
\gt_\pq{=}\langle\Rad_{\;\pq}\rangle \;\text{if $\pq{\neq}0,$}
\end{equation*}
is an effective prolongation  
of an fundamental graded Lie algebra  
of the third kind. It is the maximal one. Indeed, by 
\cite[Thm.5.3]{MMN2018} 
the effective prolongations 
of $\mt{=}{\sum}_{\pq{<}0}\gt_{\pq}$ 
are finite dimensional,
because in this case $W{=}\{0\}.$ 
Then, since $\gt$ is simple, it is maximal by 
Proposition~\ref{prop-7.3}. 
The cross marked Dynkin diagram is
\begin{equation*}
  \xymatrix@R=-.3pc{  
\alphaup_1 & \alphaup_2  \\
\!\!\medcirc\!\!\!
\ar@3{->}[r]&\!\!\medcirc\!\! \\
&\times}\quad \qquad\begin{matrix} 
\qquad\quad \\
\alphaup_1{=}2\omegaup,\;\; \alphaup_2{=}\omegaup{-}\epi,
\end{matrix}
\end{equation*}
with $\deg(\alphaup_1){=}0,$ $\deg(\alphaup_2){=}1$  
see e.g. \cite{Cartan1910,golubitsky1971,Wil1971}). 
\par\smallskip
This gradation and those introduced for $\mathbf{B}_n$ 
are compatible only with the split real forms of the complex
simple Lie algebras considered above: we obtain structures of 
effective prolongation of type $\gl_n(\R)$ 
and second kind on  
$\ot(n,n{+}1)$ for $n{\geq}3$ 
and of the third kind on the split real form of~$\mathbf{G}_2$ for $n{=}2.$\par 

\subsubsection{$\gt_{{-}1}{\simeq}V_{2\omegaup_1},V_{3\omegaup_1}
:$ construction of $\mathbf{C}_n$ and $\mathbf{G}_2$}
Let us consider now the irreducible faithful representation of $\slt_n(\C)$ on the
space $\Symm_2(\C^n)$ of degree two symmetric tensors. 
From \cite[\S{3.1}]{MMN2018}
%\S\ref{subs-3-3} 
we know
that the maximal effective prolongation
of type $\gl_n(\C)$ of $\Symm_2(\C^n)$ is finite dimensional if $n{\geq}3.$ In fact, 
it is isomorphic to a Lie algebra of type $\mathbf{C}_n.$ 
The dominant weight of $\Symm_2(\C^n)$ is $2\omegaup_1$ and we have 
$\|\,2\omegaup_1\,\|^2=4{-}\frac{4}{n}.$ We 
consider the extremal weight $\omegaup{=}2\eq_n{-}\tfrac{2}{n}\eq_0$ and
take the marker $\epi{=}\frac{2}{n}\eq_0.$ Set 
\begin{equation*} 
\begin{cases}
 \Rad_{\;{-}1}=\{\wq{+}\epi\mid \wq{\in}
 \pes(2\omegaup_1)\}=\{(\eq_i{+}\eq_j)\mid
 1{\leq}i{\leq}j{\leq}n\},\\
 \Rad_{\;0} =\{\eq_i{-}\eq_j\mid 1{\leq}i{\neq}j{\leq}n\},\\
  \Rad_{\;1}=\{\wq{-}\epi\mid \wq{\in}\pes(2\omegaup_{n{-}1})\}=\{-(\eq_i{+}\eq_j)\mid
 1{\leq}i{\leq}j{\leq}n\}.
\end{cases}
\end{equation*}
Then $\Rad={\bigcup}_{\pq{=}{-}1}^1\Rad_{\;\pq}$ is a root system of type $\mathbf{C}_n.$ 
With a Cartan subalgebra $\hg_n$ of dimension $n,$ we obtain the maximal EPFGA of type
$\gl_n(\C)$ of $\Symm_2(\C^n)$ in the form 
\begin{equation*}
 \gt={\sum}_{\pq{=}{-}1}^1\gt_{\pq}\simeq\spt(n,\C),\;\;\;\text{with}\;\;\gt_0{=}\hg_n{\oplus}\langle\Rad_{\;0}\rangle,
 \;\; \gt_{\pq}=\langle\Rad_{\;\pq}\rangle,\;\text{for $\pq{\neq}0.$}
\end{equation*}
Indeed,   
\begin{equation*}
 \Rad=\{{\pm}\eq_i{\pm}\eq_j\mid 1{\leq}i{<}j\leq{n}\}\cup\{2\eq_i\mid 1{\leq}i{\leq}n\}
\end{equation*}
is 
the set of roots of $\spt(n,\C)$ 
and we can obtain the gradation above from 
its cross marked Dynkin diagram 
\begin{equation*}
  \xymatrix@R=-.3pc{ % \alphaup_n & 
  \alphaup_1 & \alphaup_2 & &\alphaup_{n-2} 
  &\alphaup_{n-1} & \alphaup_n\\
%\!\!\medcirc\!\!\!
%\ar@2{->}[r]&
\!\!\medcirc\!\! \ar@{-}[r]& \!\!\medcirc\!\! \ar@{-}[r]&
\cdots \ar@{-}[r]& \!\!\medcirc\!\!\ar@{-}[r]  &\!\!\medcirc\!\!
\ar@{<=}[r] & \!\!\medcirc
\\ &&&& & \times}
\end{equation*}
with 
$
 \alphaup_i=\eq_{i}-{\eq}_{i+1},\;\text{for}\, 
 1{\leq}i{\leq}n{-}1,\;\;\alphaup_n=2\eq_n,
$
by requiring that $\deg(\alphaup_i){=}0$ 
for $1{\leq}i{\leq}n{-}1$ and $\deg(\alphaup_n){=}1.$ 

\par
\smallskip
For $n{=}2,$ we consider the irreducible 
$\slt_2(\C)$-module $\Symm_3(\C^2){\simeq}
V_{3\omegaup_1}.$  
Its exterior square 
contains the irreducible representation $V_{0} \simeq
\Lambda^2(\C^2) \simeq \C.$  One can check, by using 
\cite[\S{3},\S{4},\S{6}]{MMN2018}
that the maximal effective prolongation of the 
fundamental graded Lie algebra 
 $\mt=\gt_{{-}1}\oplus\gt_{{-}2},$
with $\gt_{{-}1}=\Symm_3(\R^2)$ and $\gt_{{-}2}=\Lambda^2(\C^2),$ 
 is
finite dimensional. 
It is in fact a simple Lie algebra of type $\mathbf{G}_2.$ 
With the marker $\epi=\frac{\sqrt{3}}{2}\eq_0$ we obtain 
\begin{equation*}
 \|\pm 3\omegaup_1{+}\epi\|^2=6,\;\; \|\pm\omegaup_1{+}\,\epi\,\|^2
 =2,\;\;\|\,2\epi\,\|^2=6.
\end{equation*}
Set 
\begin{equation*} 
\begin{cases}
 \Rad_{\;{-}2}=\{2\epi\},\\
 \Rad_{\;{-}1}=\{\epi{\pm}\omegaup_1,\epi{\pm}3\omegaup_1\},\\ 
 \Rad_{\;0}=\{\pm2\omegaup_1\},\\
 \Rad_{\;1}=\{{-}\epi{\pm}\omegaup_1,{-}\epi{\pm}3\omegaup_1\},\\  
 \Rad_{\;2}=\{{-}2\epi\}.
\end{cases}
\end{equation*}
The union $\Rad={\bigcup}_{\pq{=}{-}2}^2\Rad_{\;\pq}$ is a root system of type $\mathbf{G}_2$ 
 and the prolongation 
\begin{equation*}
 \gt={\sum}_{\pq{=}{-}2}^2\gt_{\pq},\;\;\text{with}\;\; \gt_0
 {=}\hg_2{\oplus}\langle\Rad_{\;0}\rangle,\;\;
 \gt_{\pq}=\langle\Rad_{\;\pq}\rangle,
\end{equation*}
with $\hg_2$ a Cartan subalgebra of dimension $2,$ is by Proposition~\ref{prop-7.3},
a maximal effective prolongation of $\mt,$ because
$\gt$ is simple. The corresponding cross-marked Dynkin diagram is
\begin{equation*}
  \xymatrix@R=-.3pc{  
\alphaup_1 & \alphaup_2  \\
\!\!\medcirc\!\!\!
\ar@3{<-}[r]&\!\!\medcirc\!\! \\
&\times}\quad \qquad\begin{matrix} 
\qquad\quad \\
\alphaup_1{=}2\omegaup_1,\;\; \alphaup_2{=}{-}(3\omegaup_1{+}\epi),
\end{matrix}
\end{equation*}
\par 
The discussion above %only 
applies to the split real form $\gl_{n}(\R)$ of $\gl_n(\C),$ exhibiting
$\spt(n,\R)$ and the real split form of $\mathbf{G}_2$  
as (EPGFLA)'s of 
a fundamental graded Lie algebra with structure algebra
$\gl_n(\R).$ 
\subsubsection{$\gt_{{-}1}{\simeq}V_{\omegaup_2}:$ construction of $\mathbf{D}_n$} \label{sec-7.1.3}
The next example refers to the representation  of
$\slt_n(\C)$ on ${V}_{\omegaup_2}{=}\Lambda^2(\C^n),$ 
for $n{\geq}4.$  
Since the Dynkin diagram of a simple prolongation would have a ramification node, all its roots
would have the same square
lenght $2$. Take the extremal weight $\omegaup{=}\eq_{n{-}1}{+}\eq_n{-}\tfrac{2}{n}\eq_0$
%Thus we take 
and
the marker 
$\epi{=}\frac{2}{n}\eq_0$; then  
$\|\epi{+}\wq\|^2{=}2,$ for all 
$\wq{\in}\Lambda(\omegaup_2).$  
We set $\mt{=}\gt_{{-}1}{=}\Lambda^2(\C^n)$ and 
\begin{equation*} 
\begin{cases}
 \Rad_{\;{-}1}=\{\omegaup{+}\epi\mid \omegaup{\in}\pes(\omegaup_2)\}=\{\eq_i{+}\eq_j\mid
 1{\leq}i{<}j{\leq}n\},\\
 \Rad_{\;0} =\{\eq_i{-}\eq_j\mid 1{\leq}i{\neq}j{\leq}n\},\\
  \Rad_{\;1}=\{\omegaup{-}\epi\mid \omegaup{\in}\pes(\omegaup_{n{-}2})\}=\{-(\eq_i{+}\eq_j)\mid
 1{\leq}i{<}j{\leq}n\}.
\end{cases}
\end{equation*}
The union $\Rad={\bigcup}_{\pq{=}{-}2}^2\Rad_{\;\pq}$ is a root system of type $\mathbf{D}_n$ 
 and the prolongation 
\begin{equation*}
 \gt={\sum}_{\pq{=}{-}2}^2\gt_{\pq}\simeq\ot(2n,\C),
 \;\;\text{with}\;\; \gt_0
 {=}\hg_{n}{\oplus}\langle\Rad_{\;0}\rangle,\;\;
 \gt_{\pq}=\langle\Rad_{\;\pq}\rangle,
\end{equation*}
with $\hg_n$ a Cartan subalgebra of dimension $n,$ 
is, by Proposition~\ref{prop-7.3},
a maximal effective prolongation of $\mt,$ because
$\gt$ is simple.
Indeed, 
\begin{equation*}
 \Rad=\{{\pm}\eq_i{\pm}\eq_j\mid 1{\leq}i{<}j{\leq}n\},
\end{equation*}
is  the root system of $\ot(2n,\C),$ 
with cross-marked Dynkin diagram 
\begin{equation*}
 \xymatrix@R=.1pc{
 \alphaup_1\\
\!\!
\medcirc\!\!\! \ar@{-}[rd] &\alphaup_{n{-}3} &\alphaup_{n{-}4} &&& \alphaup_{1}\\
\alphaup_n & \!\!\medcirc\!\!\! \ar@{-}[r]  & \!\!\medcirc\!\!\! \ar@{-}[r] & \ar@{--}[r] &{}
\ar@{-}[r] & \!\!\medcirc\!\!\!
\\
\!\!
\medcirc\!\!\! \ar@{-}[ru]
\\
 \times}
\end{equation*}
with $\alphaup_i=\eq_i{-}\eq_{i{+}1}$ for $1{\leq}i{\leq}n{-}1$ and $\alphaup_n{=}(\eq_{n{-}1}{+}\eq_n).$ 
By setting $\deg(\eq_i)=\tfrac{1}{2}$ for $1{\leq}i{\leq}n$ we obtain $\deg(\alphaup_i){=}0$
for $1{\leq}i{\leq}n{-}1,$ $\deg(\alphaup_n){=}1$ and hence
the gradation above for $\gt.$ 

The gradation above is compatible both with the Satake diagram
of the split form $\gl_n(\R),$ yielding $\ot(n,n),$ 
and, for $n{=}2m$ even, also with $\slt_m(\Hb),$ which has the Satake diagram
\begin{equation*}
  \xymatrix@R=-.3pc{  \alphaup_{2m{-}1} & \alphaup_{2m{-}2} & \alphaup_{2m{-}3} & 
  &\alphaup_{2} &\alphaup_{1} \\
\!\!\medbullet\!\!\!
\ar@{-}[r]&\!\!\medcirc\!\! \ar@{-}[r]& 
\!\!\medbullet\!\!\ar@{-}[r]& \cdots \ar@{-}[r]& \!\!\medcirc\!\!
\ar@{-}[r]  &\!\!\medbullet}
\end{equation*}
In this case the corresponding
% real representation $V$ 
%  is of quaternionic type. The
  maximal effective prolongation of type $\gl_n(\R)$ of the fundamental graded Lie algebra
of the first kind $V{\simeq}\Lambda^2(\R^{2m})$ is 
isomorphic to
$\ot^*(2m),$ having cross-marked Satake diagram  
\begin{equation*}
 \xymatrix@R=.1pc{
 \alphaup_{2m{-}1}\\
\!\!\medbullet\!\!\! \ar@{-}[rd] &\alphaup_{2m{-}2} &\alphaup_{2m{-}3} 
&\alphaup_{2m{-}4} && \alphaup_{1}\\
\alphaup_{2m}
& \!\!\medcirc\!\!\! \ar@{-}[r]  & \!\!\medbullet\!\!\! \ar@{-}[r] &
\!\!\medcirc\!\! \ar@{--}[r] &{}
\ar@{-}[r] & \!\!\medbullet\!\!\!
\\
\!\!\medcirc\!\!\! \ar@{-}[ru]
\\
\times
 }
\end{equation*}
\subsubsection{$\gt_{{-}1}{\simeq}{V}_{\omegaup_3}:$ construction of 
$\mathbf{E}_6,$ $\mathbf{E}_7,$ $\mathbf{E}_8$} 
Let us consider effective prolongations 
which are constructed on  the representation of
$\slt_n(\C)$ on $\Lambda^3(\C^n),$ for $n{\geq}6.$ 
We know from \cite[\S{3.1},\S{6}]{MMN2018}
%\S\ref{subs-3-3} and \S\ref{sect8}
that they 
are finite dimensional. The dominant weight for  $\Lambda^3(\C^n)$ is
$\omegaup_3=\eq_1{+}\eq_2{+}\eq_3{-}\frac{3}{n}\eq_{\,0}.$
By Remark~\ref{rmk7.4} we know that the necessary and sufficient condition for
the existence of a semisimple effective prolongation $\gt$ of an $\mt$ with $\gt_{{-}1}{\simeq}\Lambda^3(\C^n)$ 
%and a necessary condition to embed 
%$\omegaup_3{+}\epsilonup_{n},$
%for a suitable \textit{marker} $\epsilonup_{n},$
%into a larger root system containing
%$\{\eq_i{-}\eq_j\,{|}\, 1{\leq}i{\neq}j{\leq}n\}$ 
is that 
\begin{equation*}
 \|\omegaup_3\|^2=3-\frac{9}{n}<2.
\end{equation*}
Thus the only possible choices are
$n{=}6,7,8$  with corresponding markers  
\begin{equation*}
 \epi_6=\frac{1}{2\sqrt{3}}\,\eq_0,\;\; \epi_7=\frac{\sqrt{2}}{7}\eq_0,\;\; \epi_8=\frac{1}{8}\,\eq_0.
\end{equation*}
%\begin{equation*}
% \epi_6=\dfrac{{\sum}_{i{=}1}^6\eq_i}{2\sqrt{3}},\;\; \epi_7=\sqrt{2}\frac{{\sum}_{i{=}1}^7\eq_i}{7},\;\;
% \epi_8=\frac{{\sum}_{i{=}1}^8\eq_i}{8}.
%\end{equation*}
Let us set
\begin{gather*}
\begin{cases}
\Rad^{(6)}_{\;{-}2}{=}\{2\epsilonup_6\},\\
\Rad^{(6)}_{\;{-}1}{=}\{\epsilonup_6{+}\omegaup\mid\omegaup{\in}
\pes(\omegaup_3)\},
\\
\Rad^{(6)}_{\;0}{=}\left\{\eq_i{-}\eq_j\mid 1{\leq}i{\neq}j{\leq}6
\right\},\\
\Rad^{(6)}_{\;1}{=}\{-\epsilonup_6{+}\omegaup\mid
\omegaup{\in}\pes(\omegaup_{n{-}3})\},
\\
\Rad^{(6)}_{\;2}{=}\{-2\epsilonup_6\},
\end{cases}
%%%\end{align*}
%%%%%%%%%
%\begin{align*}
\qquad 
\begin{cases}
\Rad^{(7)}_{\;{-}2}{=}\{2\epsilonup_7{+}\omegaup\mid\omegaup
{\in}\pes(\omegaup_6)\},
\\
\Rad^{(7)}_{\;{-}1}{=}\{\epsilonup_7{+}\omegaup
\mid 
\omegaup{\in}\pes(\omegaup_3)\},\\
\Rad^{(7)}_{\;0}{=}\left\{\eq_i{-}\eq_j\mid 1{\leq}i{\neq}j{\leq}7
\right\},\\
\Rad^{(7)}_{\;1}{=}\{{-}\epsilonup_7{+}\omegaup\mid
\omegaup{\in}\pes(\omegaup_4)\},\\
\Rad^{(7)}_{\;2}{=}\{{-}2\epsilonup_7{+}\omegaup\mid
\omegaup{\in}\pes(\omegaup_1)\},
\end{cases}
%\end{align*}
%\begin{align*}
 \\
\begin{cases}
\Rad^{(8)}_{\;{-}2}{=}\{2\epsilonup_8{+}\omegaup\mid\omegaup
{\in}\pes(\omegaup_6)\},
\\
\Rad^{(8)}_{\;{-}1}{=}\{\epsilonup_8{+}\omegaup
\mid 
\omegaup{\in}\pes(\omegaup_3)\},\\
\Rad^{(8)}_{\;0}{=}\left\{\eq_i{-}\eq_j\mid 1{\leq}i{\neq}j{\leq}8
\right\},\\
\Rad^{(8)}_{\;1}{=}\{{-}\epsilonup_8{+}\omegaup\mid
\omegaup{\in}\pes(\omegaup_5)\},\\
\Rad^{(8)}_{\;2}{=}\{{-}2\epsilonup_8{+}\omegaup\mid
\omegaup{\in}\pes(\omegaup_2)\}.
\end{cases}
\end{gather*}
\par 
Then one can show that,
for each $n{=}6,7,8,$ the sets 
$\Rad^{(n)}{=}{\bigcup}_{\pq{=}{-}2}^2\Rad^{(n)}_{\pq}$
are root systems of type $\mathbf{E}_n,$ with Dynkin diagrams
{\small
\begin{equation*}
  \xymatrix@C=.9pc@R=-.3pc{  \alphaup_1 & \alphaup_2 &\alphaup_3 
  &\alphaup_4 &\alphaup_5\\
\!\!\medcirc\!\!\!
\ar@{-}[r]&\!\!\medcirc\!\! \ar@{-}[r]
&\!\!\medcirc\!\!\ar@{-}[r]  \ar@{-}[dddddd]
&\!\!\medcirc\!\!\ar@{-}[r]  &\!\!\medcirc& && \alphaup_i{=}\eq_i{-}\eq_{i{+}1}\\
&&  \\ && \\ && \\ && \\ && \\
& & \!\medcirc\! &\!\!\!\! \!\!\!\!\!\!\!\!\!\!\!\alphaup_6 &&&& \alphaup_6{=}{-}\epsilonup_6{+}\tfrac{1}{2}(
\eq_4{+}\eq_5{+}\eq_6
{-}\eq_1{-}\eq_2{-}\eq_3)\\
& & \times}
\end{equation*}
\vspace{5pt}
 \begin{equation*}
  \xymatrix@C=.9pc@R=-.3pc{  
  \alphaup_1 & \alphaup_2 &\alphaup_3 &\alphaup_4 &\alphaup_5
  &\alphaup_6\\
\!\!\medcirc\!\!\!
\ar@{-}[r]&\!\!\medcirc\!\! \ar@{-}[r]
&\!\!\medcirc\!\!\ar@{-}[r]  \ar@{-}[dddddd]
&\!\!\medcirc\!\!\ar@{-}[r] 
 &\!\!\medcirc\!\!\ar@{-}[r] &\!\!\medcirc&  
 \alphaup_i{=}\eq_i{-}\eq_{i{+}1}
 \\
&&  \\ && \\ && \\ && \\ && \\
& & \!\medcirc\! &\!\!\!\! \!\!\!\!\!\!\!\!\!\!\!\alphaup_7&&& \alphaup_7{=}{-}\epi_7{+}\tfrac{1}{7}[3
(\eq_1{+}\cdots{+}\eq_7){-}7
(\eq_1{+}\eq_2{+}\eq_3)]\\
& & \times}
\end{equation*}
\vspace{5pt}
\begin{equation*}
  \xymatrix@C=.9pc@R=-.3pc{  \alphaup_1 & \alphaup_2 
  &\alphaup_3 &\alphaup_4 &\alphaup_5
  &\alphaup_6&\alphaup_7\\
\!\!\medcirc\!\!\!
\ar@{-}[r]&\!\!\medcirc\!\! \ar@{-}[r]&\!\!\medcirc\!\!\ar@{-}[r]  \ar@{-}[dddddd]
&\!\!\medcirc\!\!\ar@{-}[r]  &\!\!\medcirc\!\!\ar@{-}[r] &\!\!\medcirc\!\!\ar@{-}[r] &\!\!\medcirc
&\alphaup_i{=}\eq_i{-}\eq_{i{+}1}\\
&&  \\ && \\ && \\ && \\ && \\
& & \!\medcirc\! 
&\! \!\!\!\!\!\!\!\!
\!\!\!\!\!
\!\!\!\!\alphaup_8
&&&&\alphaup_8{=}{-}\epsilonup_8{+}\tfrac{1}{8}[
8(\eq_i{+}\eq_j{+}\eq_k){-}3(\eq_1{+}\cdots{+}\eq_8)]\\
& & \times
}
\end{equation*}}
Accordingly, we obtain on the complex Lie algebras of
type  $\mathbf{E}_6,$ $\mathbf{E}_7,$ $\mathbf{E}_8$ 
structures of effective prolongations of type $\gl_n(\C),$ 
for $6{\leq}n{\leq}8,$ by setting 
\begin{align*} \gt={\sum}_{\pq{=}{-}2}^2\gt_{\pq},\quad\text{with}\quad
\begin{cases}
\gt_{\pq}{=}\langle \Rad^{(n)}_{\pq}\rangle, \quad 
\text{for $\pq{=}{\pm}{1},{\pm}{2},$}\\
\gt_0{=}\hg_n\oplus
\langle\Rad^{(n)}_0\rangle \simeq\gl_n(\C),
\end{cases}
\end{align*}
where $\hg_n$ is an $n$-dimensional 
Cartan subalgebra. \par 
In all cases we have $\gt_{{-}1}{\simeq}\Lambda^3(\C^n)$
and $\gt_{{-}2}{\simeq}\Lambda^6(\C^n),$ with the Lie
brackets defined by the exterior product.
This gradation is compatible with the non compact 
real forms $\mathbf{E}\mathrm{I},\mathrm{I\!{I}},
\mathrm{I\!{I}\!{I}}$ 
of $\mathbf{E}_6$
(see below). In these cases 
$\gt_{{-}1}{\simeq}\Lambda^3(\R^6),$ while $[\gt_0,\gt_0]$
is simple and of type $\slt_6(\R),$ $\su(3,3)$ and 
$\su(1,4),$ respectively.
For $n{=}7,8,$ the gradation is only consistent with the real split forms
of $\mathbf{E}_7$ and $\mathbf{E}_8$
yielding the real analogue of the examples 
above.
%% quiiiiiiiiiiiii
\subsection{Structure algebras of type $\mathbf{B}$}
The root system of a complex Lie algebra of type
$\mathbf{B}_m$ (isomorphic to $\ot(2n{+}1,\C)$) 
is 
\begin{equation*}
 \Rad=\{{\pm}\eq_i\mid 1{\leq}i{\leq}m\}\cup\{{\pm}\eq_i{\pm}\eq_j\mid 1{\leq}i{<}j{\leq}m\}
\end{equation*}
for an orthonormal basis $\eq_1,\hdots,\eq_m$ of $\R^m$ 
and  its Dynkin diagram 
\begin{equation*}
  \xymatrix@R=-.3pc{  \alphaup_1   &\alphaup_{2} & &\alphaup_{m-1}& \alphaup_m  \\
\!\!\medcirc\!\!\!
\ar@{-}[r]&\!\!\medcirc\!\! \ar@{-}[r]& \cdots \ar@{-}[r]& \!\!\medcirc\!\!\ar@2{->}[r]  &\!\!\medcirc}
\end{equation*}
has simple roots that 
can be chosen to be $\alphaup_i=\eq_{i}{-}\eq_{i{+}1}$ for $1{\leq}i{<}m$ and $\alphaup_m{=}\eq_m.$
Its fundamental weights are
$$\sigmaup_1{=}\eq_1,
\; \sigmaup_2{=}\eq_1{+}\eq_2,\; \hdots,
\sigmaup_{m{-}1}{=}\eq_1{+}\cdots{+}\eq_{m{-}1},\;
\sigmaup_m{=} \tfrac{1}{2}(\eq_1{+}
\cdots{+}\eq_m).$$
Its irreducible representation with maximal weight 
$\sigmaup_m$
is called its \emph{complex spin representation} and indicated by $S_{\!2m+1}^{\C}.$ 
 Its weights  
$$\pes(\sigmaup_m)
{=}\{\tfrac{1}{2}({\pm}\eq_1{\pm}\cdots{\pm}\eq_m)\}$$ 
are all simple, so that $\dim(S_{2m+1}^{\C}){=}2^m.$ 
\par 
We know (see \cite[Ex.3.3,3.8]{MMN2018})
%Examples~\ref{examp-3.3},\ref{ex-tan-3-6}) 
that all the effective prolongations 
of type $\ot(n,\C),$ with $n{\geq}2,$ 
or $\mathfrak{co}(n,\C), $ with $n{\geq}3,$
of a fundamental graded Lie algebra of the first kind
are finite dimensional. Then this holds also 
for fundamental graded Lie algebras of any finite kind.
\subsubsection{Spin representation for $\mathbf{B}_m$}
Let us take an $\mt$ with 
$\gt_{{-}1}$ equal to the spin representation 
$S{=}S_{2m{+}1}^{\C}$ of $\ot(2m{+}1,\C).$ 
They will be complexifications of 
\textit{real} spin representations of real orthogonal
Lie algebras.
\par
To obtain a semisimple effective prolongation of type $\co(2m{+}1,\C)$ of a 
fundamental graded Lie algebra with
$\gt_{{-}1}{=}S,$ since its dominant weight 
is attached to a simple root of length $1,$ by \eqref{eq7.2}
it is necessary to produce a \textit{new 
 lenght $1$ root} by adding a \textit{marker}
to the dominant weight 
$\sigmaup_m$ of $S.$ Since $\|\sigma_m\|^2=\frac{m}{4},$
this is possible iff $m{\leq}3.$ 
\par 
For 
$m{=}1,$ we have $\ot(3,\C){\simeq}\slt_2(\C)$ and hence we refer to \S\ref{sub7.1.1}
(in particular, we may consider the split real form of $\mathbf{G}_2$ as related to the 
real spin
representation of $\ot(1,2)$).
\par For $m{=}2$ we obtain  a maximal effective prolongation 
which is isomorphic to 
$\spt(3,\C)$
and whose real form $\spt_{1,2}$ can be associated to the real spin
representation of $\ot(2,3).$ 
\par 
When $m{=}3$ 
we obtain 
a presentation of the exceptional Lie algebra of
type 
$\mathbf{F}_4,$
as an
effective prolongation of type $\co(7,\C)$ 
of a fundamental graded Lie algebra 
of depth $2$ with
$\gt_{{-}1}$ equals to the $8$-dimensional spin representation
of $\ot(7,\C).$
%is the simple complex Lie algebra of type~$\mathbf{F}_4.$
Taking % \textit{marker}
$\epi$ orthogonal to $\langle\eq_1,\eq_2,\eq_3\rangle$
and of lenght $\frac{1}{2},$ we obtain 
\begin{equation*}
\|\,\sigmaup_3{+}\epi\,\|^2=1,\quad \|\,\eq_1{+}2\epi\,\|^2=2,\;\;
\|\,2\epi\,\|^2{=}2.
\end{equation*}
We note that the vector representation 
$V_{\eq_1}\simeq\C^7$ of $\ot(7,\C)$ 
is an irreducible summand
of $\Lambda^2(S).$ Then we can take $\mt{=}S{\oplus}V_{\eq_1}.$
 Set
 \begin{align*}
 \begin{cases}
 \Rad_{\;{-}2}=\{2\epi+\omegaup\mid \omegaup{\in}\pes(\eq_1)\}=
 \{2\epi\}\cup\{2\epi{\pm}\eq_i\mid 1{\leq}i{\leq}3\},\\
 \Rad_{\;{-}1}=\{\epi+\omegaup\mid\omegaup{\in}\pes(\sigmaup_3)\}
 =\{\epi{+}\tfrac{1}{2}({\pm}\eq_1{\pm}\eq_2{\pm}\eq_3)
 \},\\
 \Rad_{\;0}=\{{\pm}\eq_i\mid 1{\leq}i{\leq}3\}
 \cup\{{\pm}\eq_i{\pm}
 \eq_j\mid 1{\leq}i{<}j{\leq}3\},\\
 \Rad_{\;1}=\{{-}\epi+\omegaup\mid\omegaup{\in}\pes(\sigmaup_3)\}
 =\{-\epi{+}\tfrac{1}{2}({\pm}\eq_1{\pm}\eq_2{\pm}\eq_3)
 \},\\ 
 \Rad_{\;2}=\{{-}2\epi+\omegaup\mid \omegaup{\in}\pes(\eq_1)\}=
 \{{-}2\epi\}\cup\{{-}2\epi{\pm}\eq_i\mid 1{\leq}i{\leq}3\}.
 \end{cases}
 \end{align*}
Then $\Rad={\bigcup}_{\pq{=}{-}2}^2\Rad_{\pq}$ 
 is a root system of type 
 $\mathbf{F}_4.$ \par 
With the four dimensional Cartan algebra $\hg_4$ 
of $\mathfrak{co}(7,\C),$ 
we obtain an effective prolongation of $\mt$ in the form 
\begin{equation*}
 \gt{=}{\sum}_{\pq{=}{-}2}^2\gt_{\pq}, \;\;\text{with}\;\; 
 \gt_{\pq} = 
\begin{cases}
 \langle\Rad_{\;\pq}\rangle ,\quad\text{for}\; \pq{=}\pm{1},{\pm}2,\\
 \hg_4\oplus  \langle\Rad_{\;0}\rangle\simeq
 \mathfrak{co}(7,\C),
 \;\; \text{for}\; \pq{=}0.
\end{cases}
\end{equation*}
\par

There are two non compact real forms of $\mathbf{F}_4,$ 
with Satake diagrams \par 
\begin{equation}\tag{$\mathbf{F}\mathrm{I}$}
  \xymatrix@R=-.3pc{  \alphaup_1 & \alphaup_2 &\alphaup_3 
  &\alphaup_4 \\
\!\!\medcirc\!\!\!
\ar@{-}[r]&\!\!\medcirc\!\! \ar@2{->}[r]
&\!\!\medcirc\!\!\ar@{-}[r]  &\!\!\medcirc}
\end{equation}
\begin{equation}\tag{$\mathbf{F}\mathrm{I\!{I}}$}
  \xymatrix@R=-.3pc{  \alphaup_1 & \alphaup_2 
  &\alphaup_3 &\alphaup_4 \\
\!\!\medbullet\!\!\!
\ar@{-}[r]&\!\!\medbullet\!\! \ar@2{->}[r]
&\!\!\medbullet\!\!\ar@{-}[r]  &\!\!\medcirc}
\end{equation}
They correspond to the \textit{real} spin representations for a 
$7$-di\-men\-sional
vector representation. This means that we have 
to take the orthogonal algebras 
$\ot(3,4)$ and $\ot(7),$ discarding 
the other possible two, namely
$\ot(2,5)$ and $\ot(1,6),$ 
which have quaternionic spin representations 
(see e.g. \cite[p.103]{Deligne99}).
\begin{rmk}
 The lifting of the quaternionic spin representations 
 of $\ot(2,5)$ and $\ot(1,6)$ yields an irreducible representation
 of $\ot(7,\C){\oplus}\slt_2(\C)$ 
 for which we cannot comply \eqref{eq7.2}. 
 In particular, there are infinitely many 
  summands with negative indices in the
associated graded Kac-Moody algebra. \par 
 In contrast, for $m{=}2,$ 
the quaternionic spin representations of $\ot(5)$ and 
 $\ot(1,4)$ can be lifted 
 to maximal effective prolongations of
 type $\co(5,\C)\oplus\gl_2(\C)$ isomorphic to $\spt(4,\C).$ 
 The corresponding real models are $\spt_{1,3}$ and 
 $\spt_{2,2},$ respectively.
\end{rmk}
\subsection{Structure algebras of type $\mathbf{C}$} 
Let us consider the complex Lie algebra $\spt(m,\C),$ 
of type $\mathbf{C}_m.$ Its root system is 
\begin{equation*}
 \Rad=\{{\pm}2\eq_i\mid 1{\leq}i{\leq}m\}
 \cup\{{\pm}\eq_i{\pm}\eq_j\mid 1{\leq}i{<}j{\leq}m\}
\end{equation*}
for an orthonormal basis $\eq_1,\hdots,\eq_m$ of $\R^m$ 
and  its Dynkin diagram 
\begin{equation*}
  \xymatrix@R=-.3pc{  \alphaup_1   &\alphaup_{2} & &\alphaup_{m-1}
  & \alphaup_m  \\
\!\!\medcirc\!\!\!
\ar@{-}[r]&\!\!\medcirc\!\! \ar@{-}[r]
& \cdots \ar@{-}[r]& \!\!\medcirc\!\!\ar@2{<-}[r]  &\!\!\medcirc}
\end{equation*}
has simple roots that 
can be chosen to be $\alphaup_i{=}\eq_{i}{-}\eq_{i{+}1}$ 
for $1{\leq}i{<}m$ and $\alphaup_m{=}2\eq_m.$
Its weights lattice is the $\Z$-module $\Z^m$ in $\R^m,$
with fundamental weights $\omegaup_j{=}{\sum}_{i{=}1}^j\eq_i,$
for $j{=}1,\hdots,m.$  
We know that
the maximal effective prolongation of type $\spt(m,\C)$ 
of the fundamental graded Lie algebra 
of the first kind $\C^{2m}$
are infinite dimensional
(see \cite[Ex.3.10]{MMN2018},
% Example~\ref{ex-tan-3.9}, 
or \cite[p.10]{Kob}). 
On the other hand, if we choose another faithful irreducible  
representation $V$ of $\spt(m,\C),$ 
then its maximal effective prolongation of type
$\mathfrak{csp}(m,\C)$ is finite dimensional 
by \cite[Prop.3.13]{MMN2018}.\par
% Proposition~\ref{prop-3.11}.\par 
Let us take for instance 
$V{=}V_{\omegaup_m}{\subset}\Lambda^m(\C^{2m}).$ 
Then $\spt(m,\C)$ acts on $V$ as an algebra of  transformations 
that keep invariant the bilinear form on $V$ that can be obtained from
 the exterior product of elements of 
$\Lambda^m(\C^{2m}).$
If $m$ is even, this is a nondegenerate symmetric bilinear form and the finite dimension of
the maximal prolongation can be also checked by using 
%Examples~\ref{examp-3.3} and~\ref{ex-tan-3-6}. \par
\cite[Ex.3.3,3.8]{MMN2018}.\par
When $m$ is odd, we can use the exterior product on $\Lambda^m(\C^{2m})$ 
to define a Lie product on $V,$ yielding a fundamental graded Lie algebra $\mt{=}V{\oplus}\Lambda^{2m}(\C^{2m})$
of the second kind, which has a finite dimensional
maximal effective prolongation of type $\mathfrak{csp}(m,\C).$ The 
fundamental weight 
$\omegaup_m$ is  attached to 
the long root $2\eq_m.$ In order to be able to find a marker 
$\epi$
to embed $\omegaup_m{+}\epi$ 
into the root system of a simple Lie algebra, we need that
$\|\omegaup\|^2{=}m{<}4,$ i.e. that
$m{\leq}3.$ Te case $m{=}3$ 
 leads to another presentation of the 
 exceptional Lie algebra $\mathbf{F}_4.$
 \subsubsection{The exceptional Lie algebra of type $\mathbf{F}_4$} 
We take the marker $\epi$ as a unit vector orthogonal to
$\eq_1,\eq_2,\eq_3.$ 
We note that 
\begin{equation*}
\|\,\omegaup_3{+}\epi\,\|^2{=}4,\;\; \|\,\eq_1{+}\epi\,\|^2=2,\;\;
\|\,2\epi\,\|^2=4 
\end{equation*} 
and  set
 \begin{align*} 
\begin{cases}
 \Rad_{\;{-}2}=\{2\epi\},\\
 \Rad_{\;{-}1}=\{\epi{+}\omegaup\mid\omegaup{\in}\pes(\omegaup_3)\}
 =
 \{\epi{\pm}\eq_1{\pm}\eq_2{\pm}\eq_3
 \}{\cup}\{\epi{\pm}\eq_i\mid 1{\leq}i{\leq}3\},\\
 \Rad_{\;\;0}=\{{\pm}2\eq_i\mid 1{\leq}i{\leq}3\}\cup\{{\pm}\eq_i{\pm}
 \eq_j\mid 1{\leq}i{<}j{\leq}3\},\\
 \Rad_{\;1}=\{{-}\epi{+}\omegaup\mid \omegaup{\in}\pes(\omegaup_3)\}
 =
 \{{-}\epi{\pm}\eq_1{\pm}\eq_2{\pm}\eq_3
 \}{\cup}\{\epi{\pm}\eq_i\mid 1{\leq}i{\leq}3\},\\ 
  \Rad_{\;2}=\{-2\epi\}.
\end{cases}
 \end{align*}
Then
\begin{equation*}
 \Rad{=}{\bigcup}_{\pq{=}{-}2}^2\Rad_{\;\pq}=\{{\pm}2{\eq}_i\mid 
 1{\leq}i{\leq}4
 \}\cup\{{\pm}\eq_i{\pm}\eq_j\mid 
 1{\leq}i{<}j{\leq}4
 \}
 \cup\left\{{\pm}\eq_1{\pm}\eq_2{\pm}\eq_3{\pm}\eq_{4})\right\},
\end{equation*}
is a root system of type $\mathbf{F}_4.$
We have, with a $4$-dimensional Cartan subalgebra $\hg,$  
 \begin{equation*}
 \gt{=}{\sum}_{\pq{=}{-}2}^2\gt_{\pq}, \;\;\text{with}\;\; 
 \gt_{\pq} = 
\begin{cases}
 \langle\Rad_{\;\pq}\rangle ,\quad\text{for}\; \pq{=}\pm{1},{\pm}2,\\
 \hg_4\oplus  \langle\Rad_{\;0}\rangle\simeq\ot(7,\C){\oplus}\C,
 \;\; \text{for}\; \pq{=}0.
\end{cases}
\end{equation*}
We have $\dim(\gt_0){=}22,$ $\dim(\gt_{{\pm}1}){=}14,$ 
$\dim(\gt_{{\pm}2})=1.$ \par 
This is the maximal  effective prolongation of a fundamental graded Lie algebra of
the second kind. 
The cross marked 
Dynkin diagram associated to $\gt$ is
\begin{equation*}
  \xymatrix@R=-.3pc{  \alphaup_1 
  & \alphaup_2 &\alphaup_3 &\alphaup_4 \\
\!\!\medcirc\!\!\!
\ar@{-}[r]&\!\!\medcirc\!\! \ar@2{<-}[r]
&\!\!\medcirc\!\!\ar@{-}[r]  &\!\!\medcirc\\
&&& \times}
\end{equation*}
with simple roots  
$
 \alphaup_1{=}\eq_1{-}\eq_2,\;\alphaup_2{=}
 \eq_2{-}\eq_3,\;\alphaup_3{=}2\eq_3,\;\alphaup_4{=}
 \eq_4{-}\eq_{1}{-}\eq_{2}{-}\eq_3.
 $
The gradation is obtained by setting 
$\deg(\eq_i){=}0$ for $1{\leq}i{\leq}3$ and 
 $\deg(\eq_4){=}1.$ 
\par
Only the split real form $\mathbf{F}\mathrm{I}$ is compatible with this grading, 
yielding a real equivalent of the complex case.

\subsection{Structure algebras of type $\mathbf{D}$}
The diagram $\mathbf{D}_m$ (we assume $m{\geq}4$)
corresponds to the orthogonal algebra $\ot(2m,\C).$ Its root system is defined,
in an orthonormal basis $\eq_1,\hdots,\eq_m$ of $\R^m,$ by
\begin{equation*}
 \Rad(\mathbf{D}_m)=\{{\pm}\eq_i{\pm}\eq_j\mid 1{\leq}i{<}j{\leq}m\},
\end{equation*}
and we consider the corresponding 
Dynkin diagram 
\begin{equation*}
 \xymatrix@R=.1pc{ \alphaup_{m{-}1} \\
\!\!\medcirc\!\!\! \ar@{-}[rd] &\alphaup_{m{-}2} &\alphaup_{m{-}3} &&& \alphaup_{1}\\
& \!\!\medcirc\!\!\! \ar@{-}[r]  & \!\!\medcirc\!\!\! \ar@{-}[r] & \ar@{--}[r] &{}
\ar@{-}[r] & \!\!\medcirc\!\!\!
\\
\!\!\medcirc\!\!\! \ar@{-}[ru]
\\ 
\alphaup_m\\
 }
\end{equation*}
with $\alphaup_i=\eq_i{-}\eq_{i{+}1}$ for $1{\leq}i{\leq}m{-}1$ and $\alphaup_m{=}\eq_{m{-}1}{+}\eq_m.$ 
The maximal root is $\eq_1{+}\eq_m$ and the 
fundamental weights are 
\begin{align*} 
\omegaup_j{=}{\sum}_{i{=}1}^j\eq_i,\;\;\text{for $1{\leq}j{\leq}m{-}2,$},\;\;
\omegaup_{m{-}1}{=}\tfrac{1}{2}(\eq_1{+}
\cdots{+}\eq_{m{-}1}{-}\eq_m),\qquad\\
\omegaup_{m}{=}\tfrac{1}{2}(\eq_1{+}\cdots{+}\eq_m).
\end{align*}
The last two are the dominant weights of  
two complex spin representations $S^{\C}_{m,\pm},$
with opposite chiralities and simple weights
 \begin{align*}
 \Wi(\omegaup_{m{-}1}){=}\Wi_-(m)
 &{=}\left.\left\{\tfrac{1}{2}{\sum}_{i{=}1}^m\aq_i\eq_i\right| 
 \aq_i{=}\pm{1},\; \aq_1{\cdots}\aq_m{=}{-}1\right\},\\
 \pes(\omegaup_m){=} 
  \Wi_+(m)&{=}
  \left.\left\{\tfrac{1}{2}{\sum}_{i{=}1}^m\aq_i\eq_i\right| 
  \aq_i{=}\pm{1},\; \aq_1{\cdots}\aq_m{=}1\right\}.
\end{align*}We call $V^{\C}_m{=}V_{\omegaup_1}{\simeq}\C^{2m}$ the complex vector representation.\par
\subsubsection{Real Spin representations of real Lie algebras of type $\mathbf{D}$}

Let us first consider complex 
effective prolongations with structure algebra $\ot(2m,\C)$ in which
$\gt_{{-}1}$ is a spin representation. A necessary condition for finding  
 a marker $\epi$ for which 
$\omegaup_{m{-}1}{+}\epi$
or $\omegaup_{m}{+}\epi$ 
could be embedded into the root system of a simple Lie algebra 
is that 
$\|\omegaup_{m{-}1}\|^2=\|\omegaup_m\|^2=\frac{m}{4}{<}2,$ 
i.e. that $m{=}4,5,6,7.$  \par
When $m{=}4,$ the spin and the vector representations are isomorphic 
and the maximal effective prolongation of type $\ot(8,\C)$ of the abelian Lie algebra $V^{\C}_4{\simeq}S^{\C}_{4,\pm}$ 
is just the orthogonal algebra $\ot(10,\C)$  
(see \cite[Ex.3.8]{MMN2018}).
%Example~\ref{ex-tan-3-6}).

For $m{=}5,6,7$ 
we obtain the three exceptional Lie algebras of
type $\mathbf{E}.$ 
Denote by $\epi_{m{+}1}$ a vector of $\R^{m{+}1},$ 
orthogonal to $\eq_1,\hdots,\eq_{m}$ and  
with $\|\,\epi_{m{+}1}\,\|^2=\frac{8{-}m}{4}.$ 
We define the sets 
\begin{gather*} 
\begin{cases}
 \Rad^{(6)}_{\;-1}=\{\epi_6+\wq\mid \wq{\in}\Wi_{-}(5)\},\\
 \Rad^{(6)}_{\;0}=\{{\pm}\eq_i{\pm}\eq_j\mid 1{\leq}i{<}j\leq{5}\},\\
 \Rad^{(6)}_{\;1}=\{{-}\epi_6+\wq\mid \wq{\in}\Wi_{+}(5)\} 
\end{cases}
\\
\begin{cases}
\Rad^{(7)}_{\;{-}2}=\{2\epi_7\},\\
 \Rad^{(7)}_{\;-1}=\{\epi_7{+}\wq\mid \wq{\in}\Wi_{+}(6)\},\\
 \Rad^{(7)}_{\;0}=\{{\pm}\eq_i{\pm}\eq_j\mid 1{\leq}i{<}j\leq{6}\},\\
 \Rad^{(7)}_{\;1}=\{{-}\epi_7+\wq\mid \wq{\in}\Wi_{+}(6)\},\\
 \Rad^{(7)}_{\;2}=\{{-}2\epi_7\},
\end{cases}
\qquad
\begin{cases}
\Rad^{(8)}_{\;{-}2}=\{2\epi_8{\pm}\eq_i\mid 1{\leq}i{\leq}7\},\\
 \Rad^{(8)}_{\;-1}=\{\epi_8+\wq\mid \wq{\in}\Wi_{-}(7)\},\\
 \Rad^{(8)}_{\;0}=\{{\pm}\eq_i{\pm}\eq_j\mid 1{\leq}i{<}j\leq{7}\},\\
 \Rad^{(8)}_{\;1}=\{{-}\epi_8+\wq\mid \wq{\in}\Wi_{+}(7)\},\\
 \Rad^{(8)}_{\;2}=\{{-}2\epi_8{\pm}\eq_i\mid 1{\leq}i{\leq}7\},
\end{cases}
\end{gather*}
Then $\Rad^{(6)}={\bigcup}_{\pq{=}{-}1}^1\Rad^{(6)}_{\;\pq},$
$\Rad^{(7)}={\bigcup}_{\pq{=}{-}2}^2\Rad^{(7)}_{\;\pq},$
$\Rad^{(8)}={\bigcup}_{\pq{=}{-}2}^2\Rad^{(8)}_{\;\pq},$
are root systems of type $\mathbf{E}_6,$ $\mathbf{E}_7,$ 
$\mathbf{E}_8,$ respectively and
 we obtain on the complex Lie algebras of type $\mathbf{E}_n$ structures of effective prolongation
for structure algebras of type $\mathbf{D}_{n{-}1}$ and their spin representations, of the form 
\begin{equation*}
 \gt={\sum}\gt_{\pq},\;\;\text{with}\;\; \gt_0
 {=}\mathfrak{co}(2n{-}2,\C),\;\; 
 \gt_{\pq}=\langle\Rad^{(n)}_{\;\pq}\rangle
 \;\text{for}\; \pq{\neq}0.
\end{equation*}
We note that $\gt_{{\pm}1}$ are 
for $n{=}6,8$ 
spin representations with opposite chirality,
while for $\mathbf{E}_7$ the representations $\gt_{{\pm}1}$
have equal chiralities. For $\mathbf{E}_8$ the
$\gt_{{\pm}2}$ representations are vectorial.
\par
Since the complexification of the spin representation that we found in these cases
are by construction
irreducible over $\C,$ these complex effective prolongation must be complexifications
of \textit{real} spin representations. 
We recall that the spin representations of $\ot(\pq,\qq)$ are (see e.g. \cite[p.103]{Deligne99})
\begin{equation*} 
\begin{cases}
\text{real} & \text{if}\;\; \qq{-}\pq\equiv 0,1,7\mod 8,\\
\text{complex} & \text{if}\;\; \qq{-}\pq\equiv 2,6\;\; \;\mod 8,\\
\text{quaternionic} & \text{if}\;\; \qq{-}\pq\equiv 3,4,5\mod 8.\\
\end{cases}
\end{equation*}
Thus, for the real forms of $\ot(10,\C),$ the semisimple ideal of  
$\gt_0$ can only be $\ot(5,5)$ $\ot(1,9).$ 
Corresponding, we obtain the real forms of $\mathbf{E}_6$ 
%In the case $\mathbf{E}_6,$ the kind is $1$ and 
%the gradation is consistent with the real forms 
having 
cross-marked Satake diagrams
\begin{equation}\tag{$\mathbf{E}{\mathrm{I}}$}
  \xymatrix@C=.9pc@R=-.3pc{  \alphaup_1 & \alphaup_3 &\alphaup_4 &\alphaup_5 &\alphaup_6\\
\!\!\medcirc\!\!\!
\ar@{-}[r]&\!\!\medcirc\!\! \ar@{-}[r]&\!\!\medcirc\!\!\ar@{-}[r]  \ar@{-}[dddddd]
&\!\!\medcirc\!\!\ar@{-}[r]  &\!\!\medcirc\\
\times &&  \\ && \\ && \\ && \\ && \\
& & \!\medcirc\! &\!\!\!\! \!\!\!\!\!\!\!\!\!\!\!\alphaup_2}
\end{equation}
\begin{equation}\tag{$\mathbf{E}{\mathrm{I\!{V}}}$}
  \xymatrix@C=.9pc@R=-.3pc{  \alphaup_1 & \alphaup_3 &\alphaup_4 &\alphaup_5 &\alphaup_6\\
\!\!\medcirc\!\!\!
\ar@{-}[r]&\!\!\medbullet\!\! \ar@{-}[r]&\!\!\medbullet\!\!\ar@{-}[r]  \ar@{-}[dddddd]
&\!\!\medbullet\!\!\ar@{-}[r]  &\!\!\medcirc\\
\times&&  \\ && \\ && \\ && \\ && \\
& & \!\medbullet\! &\!\!\!\! \!\!\!\!\!\!\!\!\!\!\!\alphaup_2}
\end{equation}
%yielding, as semisimple ideals of the $\gt_0$-components, $\ot(5,5)$ and $\ot(1,9),$ respectively. In both
%cases the spin representations are real (they are in fact pairs of octonions (see e.g. \cite{Baez2002}). 
\par\smallskip
For $\mathbf{E}_7$ we obtain a graded algebra of the second kind. 
The real forms of $\ot(12,\C)$ having real spin representations are $\ot(6,6),$ 
$\ot(2,10)$  
and $\ot^*(12)$ (see e.g. \cite{barut}).
Thus we obtain all non compact real forms of $\mathbf{E}_7$ as (EPGFLA)'s of the real spin
representations for vector dimension $12,$ which correspond to the 
cross-marked Satake diagrams
 \begin{equation}\tag{$\mathbf{E}\mathrm{V}$}
  \xymatrix@C=.9pc@R=-.3pc{  \alphaup_1 & \alphaup_3 &\alphaup_4 &\alphaup_5 &\alphaup_6
  &\alphaup_7\\
\!\!\medcirc\!\!\!
\ar@{-}[r]&\!\!\medcirc\!\! \ar@{-}[r]&\!\!\medcirc\!\!\ar@{-}[r]  \ar@{-}[dddddd]
&\!\!\medcirc\!\!\ar@{-}[r]  &\!\!\medcirc\!\!\ar@{-}[r] &\!\!\medcirc\\
\times &&  \\ && \\ && \\ && \\ && \\
& & \!\medcirc\! &\!\!\!\! \!\!\!\!\!\!\!\!\!\!\!\alphaup_2}
\end{equation}

\begin{equation}\tag{$\mathbf{E}\mathrm{V\!{I}\!{I}}$}
  \xymatrix@C=.9pc@R=-.3pc{  \alphaup_1 & \alphaup_3 &\alphaup_4 &\alphaup_5 &\alphaup_6
  &\alphaup_7\\
\!\!\medcirc\!\!\!
\ar@{-}[r]&\!\!\medbullet\!\! \ar@{-}[r]&\!\!\medbullet\!\!\ar@{-}[r]  \ar@{-}[dddddd]
&\!\!\medbullet\!\!\ar@{-}[r]  &\!\!\medcirc\!\!\ar@{-}[r] &\!\!\medcirc\\
\times&&  \\ && \\ && \\ && \\ && \\
& & \!\medbullet\! &\!\!\!\! \!\!\!\!\!\!\!\!\!\!\!\alphaup_2}
\end{equation}
 \begin{equation}\tag{$\mathbf{E}\mathrm{V\!{I}}$}
  \xymatrix@C=.9pc@R=-.3pc{  \alphaup_1 & \alphaup_3 &\alphaup_4 &\alphaup_5 &\alphaup_6
  &\alphaup_7\\
\!\!\medcirc\!\!\!
\ar@{-}[r]&\!\!\medcirc\!\! \ar@{-}[r]&\!\!\medcirc\!\!\ar@{-}[r]  \ar@{-}[dddddd]
&\!\!\medbullet\!\!\ar@{-}[r]  &\!\!\medcirc\!\!\ar@{-}[r] &\!\!\medbullet\\
\times &&  \\ && \\ && \\ && \\ && \\
& & \!\medbullet\! &\!\!\!\! \!\!\!\!\!\!\!\!\!\!\!\alphaup_2}
\end{equation}
For $\mathbf{E}\mathrm{V}$ the semisimple part of the degree zero subalgebra is $\ot(6,6)$
and for $\mathbf{E}\mathrm{V\!{I}\!{I}}$ it is $\ot(2,10).$ In both cases the spin representations involved are real.
\par
We recall that the real form $\ot^*(12)$ is obtained from $\ot(12,\C)$ by  the conjugation 
which is described on the roots by 
\begin{equation*}
 \bar{\eq}_{2h-1}=\eq_{2h},\;\text{for}\; 1{\leq}h{\leq}3.
\end{equation*}
Accordingly, $\mathfrak{so}^*(12)$ has both a real and
a quaternionic spin representation.

\par\smallskip
%Also the $\gt$ 
%$\mathbf{E}_8$ 
%effective prolongation isomorphic to  
% is of the second kind. 
The real forms of $\ot(14,\C)$ admitting 
real spin representations
are $\ot(7,7)$ and $\ot(3,11)$ ($\mathfrak{so}^*(14)$ has a complex spin representation).
In this case, $\gt_{{\pm}1}$ are spin representations with
opposite chiralities and  $\gt_{{\pm}2}$  dual copies of
the vector representation 
of $\ot(14,\C),$ which correspond to real vector representations $V^{\R}_7{=}\R^{14}.$ 
Thus we obtain all non compact real Lie algebras
of type $\mathbf{E}_8$ as effective prolongations of type 
$\ot(\pq,\qq)$ for FGLA's $\mt$ whose $\gt_{{-}1}$ component is a \textit{real} spin representation.
Their cross-marked Satake diagrams are
\begin{equation}\tag{$\mathbf{E}\mathrm{V\!{I}\!{I}\!{I}}$}
  \xymatrix@C=.9pc@R=-.3pc{  \alphaup_1 & \alphaup_3 &\alphaup_4 &\alphaup_5 &\alphaup_6
  &\alphaup_7&\alphaup_8\\
\!\!\medcirc\!\!\!
\ar@{-}[r]&\!\!\medcirc\!\! \ar@{-}[r]&\!\!\medcirc\!\!\ar@{-}[r]  \ar@{-}[dddddd]
&\!\!\medcirc\!\!\ar@{-}[r]  &\!\!\medcirc\!\!\ar@{-}[r] &\!\!\medcirc\!\!\ar@{-}[r] &\!\!\medcirc\\
\times &&  \\ && \\ && \\ && \\ && \\
& & \!\medcirc\! &\!\!\!\! \!\!\!\!\!\!\!\!\!\!\!\alphaup_2}
\end{equation}
\begin{equation}\tag{$\mathbf{E}\mathrm{I\!{X}}$}
  \xymatrix@C=.9pc@R=-.3pc{  \alphaup_1 & \alphaup_3 &\alphaup_4 &\alphaup_5 &\alphaup_6
  &\alphaup_7&\alphaup_8\\
\!\!\medcirc\!\!\!
\ar@{-}[r]&\!\!\medbullet\!\! \ar@{-}[r]&\!\!\medbullet\!\!\ar@{-}[r]  \ar@{-}[dddddd]
&\!\!\medbullet\!\!\ar@{-}[r]  &\!\!\medcirc\!\!\ar@{-}[r] &\!\!\medcirc\!\!\ar@{-}[r] &\!\!\medcirc\\
\times &&  \\ && \\ && \\ && \\ && \\
& & \!\medbullet\! &\!\!\!\! \!\!\!\!\!\!\!\!\!\!\!\alphaup_2}
\end{equation}
The corresponding simple ideals in the degree $0$ subalgebras 
are $\ot(7,7)$ and $\ot(3,11),$ respectively.
% Hence the ${\pm}1$-degree spin representations are real.
\subsubsection{Complex spin representations of real Lie algebras 
of type $\mathbf{D}$}
The complexification of an irreducible representation of the complex
type
of 
a real Lie algebra  is the direct sum of two non isomorphic representations. We should therefore start
by considering the complex effective prolongations of type $\gt_0$ for a reductive $\gt_0$
with $[\gt_0,\gt_0]{\simeq}\ot(2m,\C)$ and a two-dimensional center,
having a $\gt_{{-}1}{\simeq}S^{\C}_{-}(m){\oplus}S^{\C}_{+}(m).$ We need to add
a marker $\epi_{\pm}$ for each irreducible component $S^{\C}_{\pm}(m)$ of $\gt_{{-}1}.$
To find a semisimple effective prolongation, we need that $\omegaup_{m{-}1}{+}\epi_-$ and
$\omegaup_{m}{+}\epi_+$ have length $\sqrt{2}$ and are orthogonal to each other.
Then $m{\in}\{4,5,6,7\}$ and 
\begin{equation*}
 \|\epi_{\pm}\|^2=\frac{8{-}m}{4},\qquad (\epi_{+}|\epi_{-}){=}-(\omegaup_{+}|\omegaup_{-})=
 \frac{m-2}{4}.
\end{equation*}
 By Cauchy's ineguality we need that 
\begin{equation*}
 \frac{m{-}2}{4}<\frac{(8-m)^2}{16}.
\end{equation*}
This is possible only for $m{=}4.$ Thus we assume $m{=}4$ and take markers $\epi_{\pm}$ with
\begin{equation*}
 \|\epi_{\pm}\|^2=1,\;\; \; (\epi_{+}|\epi_{-})={-}\frac{1}{2}.
\end{equation*}
Then we define 
\begin{align*}
&
 \begin{cases}
 \Rad_{\;{-}2}^{(6)}
 =\{\epi_-{+}\epi_+{\pm}\eq_i\mid 1\leq{i}\leq{4}\},\\
 \Rad_{\;{-}1}^{(6)}=\{\epi_-{+}\wq\mid\wq{\in}\Wi_{-}(4)\} 
 \cup \{\epi_+{+}\wq\mid\wq{\in}\Wi_{+}(4)\},\\
 \Rad_{\;0}^{(6)}=\{{\pm}\eq_i{\pm}\eq_j\mid 1{\leq}i{<}j{\leq}4\},\\
 \Rad_{\;1}^{(6)}=\{{-}\epi_{-}{+}\wq\mid\wq{\in}\Wi_{-}(4)\} 
 \cup \{{-}\epi_+{+}\wq\mid\wq{\in}\Wi_{+}(4)\},\\ 
  \Rad_{\;2}^{(6)}=\{{-}\epi_{-}
  {-}\epi_{+}{\pm}\eq_i\mid 1\leq{i}\leq{4}\},
\end{cases}
%\\
%&
%\begin{cases}
%  \Rad_{\;{-}3}^{(7)}=\{2\epi_0{+}\epi_1\},\\
%   \Rad_{\;{-}2}^{(7)} =\{\epi_0{+}
%   \epi_1+\wq\mid \wq{\in}\Wi_{-}(5)\}\\
%  \Rad_{\;{-1}}^{(7)}=\{\epi_0{+}\wq\mid \wq{\in}\Wi_{+}(5) \} 
%  \cup \{\epi_1{\pm}\eq_i\mid 1{\leq}i{\leq}5\},\\
% \Rad_{\;0}^{(7)}=\{{\pm}\eq_i{\pm}\eq_j\mid 1{\leq}i{<}j{\leq}5\},\\  
%  \Rad_{\;{1}}^{(7)}=\{{-}\epi_0{+}\wq\mid 
%  \wq{\in}\Wi_{-}(5) \} 
%  \cup \{{-}\epi_1{\pm}\eq_i\mid 1{\leq}i{\leq}5\},\\ 
%  \Rad_{\;2}^{(7)} =\{{-}\epi_0{-}\epi_1
%  {+}\wq\mid \wq{\in}\Wi_{+}(5)\}\\  
%    \Rad_{\;3}^{(7)}=\{{-}2\epi_0{-}\epi_1\},\\ 
%\end{cases}\\
%& 
%\begin{cases}
% \Rad_{\;{-}4}^{(8)}=\{2\epi_0{+}2\epi_1\},\\
% \Rad_{\;{-}3}^{(8)}= \{2\epi_0{+}\epi_1{\pm}\eq_i
% \mid 1{\leq}i{\leq}6\},\\
% \Rad_{\;{-}2}^{(8)}=\{\epi_0{+}\epi_1{+}\wq\mid 
% \wq{\in}\Wi_{-}(6)\}\cup\{2\epi_0\},\\
%  \Rad_{\;{-}1}^{(8)}=\{\epi_0{+}\wq\mid\wq{\in}\Wi_{+}(6)\}
%  \cup\{\epi_1{\pm}\eq_i\mid 1{\leq}i{\leq}6\},\\
%\Rad_{\;0}^{(7)}=\{{\pm}\eq_i{\pm}\eq_j\mid 1{\leq}i{<}j{\leq}6\},\\    
%  \Rad_{\;1}^{(8)}=\{{-}\epi_0{+}\wq\mid\wq{\in}\Wi_{-}(6)\}
%  \cup\{{-}\epi_1{\pm}\eq_i\mid 1{\leq}i{\leq}6\},\\
% \Rad_{\;2}^{(8)}=\{{-}\epi_0{-}\epi_1
% {+}\wq\mid \wq{\in}\Wi_{-}(6)\}\cup\{{-}2\epi_0\},\\  
% \Rad_{\;3}^{(8)}= \{{-}2\epi_0{-}\epi_1{\pm}\eq_i
% \mid 1{\leq}i{\leq}6\},\\ 
% \Rad_{\;4}^{(8)}=\{{-}2\epi_0{-}2\epi_1\} .
%\end{cases}
\end{align*}
One can check that $\Rad^{(6)}={\bigcup}\Rad^{(6)}_{\;\pq}$ is a root system of type $\mathbf{E}_6,$
and 
\begin{equation*}
 \gt{=}{\sum}_{\pq{=}{-}2}^{2}\gt_{\pq},\;\;\text{with}\;\; 
\begin{cases}
 \gt_0{=}\hg_n{\oplus}\langle\Rad_{\;0}^{(6)}\rangle,\;\;\dim_{\C}\hg_6{=}6,\\
 \gt_{\pq}=\langle\Rad_{\;\pq}^{(6)}\rangle,\;\;\;\pq{\neq}0,
\end{cases}
\end{equation*}
is the maximal effective prolongation of type $\gt_0,$ with $\gt_0$ reductive and $[\gt_0,\gt_0]{\simeq}\ot(8,\C),$
of a 
FGLA with $\gt_{{-}1}{\simeq}S^{\C}_-(4){\oplus}S^{\C}_+(4).$  \par
The real forms of $\ot(8,\C)$ having a complex spin representation are $\ot(3,5)$ and $\ot(1,7),$
corresponding to the 
 two real non compact forms of $\mathbf{E}_6$ having 
 cross-marked Satake diagrams
\par\bigskip
\begin{equation}\tag{$\mathbf{E}\mathrm{I\!{I}}$}
  \xymatrix@R=-.3pc{\!\!\medcirc\!\!\ar@{-}[r]\ar@{<->}@/^2pc/[rrrr]
&\!\!\medcirc\!\!
\ar@{-}[r]\ar@{<->}@/^1pc/[rr]&\!\!\medcirc\!\! \ar@{-}[r]
\ar@{-}[dddddd]&\!\!\medcirc\!\! \ar@{-}[r]&\!\!\medcirc\!\!\\
\times &&& &\times \\ && \\ && \\  && \\    &&\\ &&\medcirc
}
\end{equation}
\begin{equation}\tag{$\mathbf{E}{\mathrm{I\!{I}\!{I}}}$}
  \xymatrix@R=-.3pc{\!\!\medcirc\!\!\ar@{-}[r]\ar@{<->}@/^1pc/[rrrr]
&\!\!\medbullet\!\!
\ar@{-}[r]&\!\!\medbullet\!\! \ar@{-}[r]
\ar@{-}[dddddd]&\!\!\medbullet\!\! \ar@{-}[r]&\!\!\medcirc\!\!\\
\times &&&&\times \\ && \\ && \\  && \\    &&\\ &&\medcirc
}
\end{equation}
(the irriducible complex representation is pictured in the diagram by two crossed white nodes
joined by an arrow).
Note that the summands on degree ${\pm}2$ in $\gt$ are the vector representations
$V^{\C}_4{\simeq}\C^8$ in the complex
and $V^{\R}_4{\simeq}\R^8$ in the real cases. 
\subsubsection{Quaternionic spin representations of real Lie algebras of~type~$\mathbf{D}$}
Let $\etaup_1,\etaup_2$ be an orthonormal basis  of $\R^2.$
We denote by $\{{\pm}(\etaup_1{-}\etaup_2)\}$ the root system
of $\slt_2(\C),$ and
by $\omegaup{=}\tfrac{1}{2}(\etaup_1{-}\etaup_2)$
its fundamental weight. \par 
Then 
$\{{\pm}(\etaup_1{-}\etaup_2)\}
{\cup}\{{\pm}\eq_i{\pm}\eq_j{\mid}1{\leq}i{<}j{\leq}m\}$
is the root system
of $\slt_2(\C){\oplus}\ot(2m,\C)$
 and, taking the usual lexicographic orders,  
its fundamental weights are 
$\omegaup,\omegaup_1,\hdots,\omegaup_m.$ \par 
The complexification of an irreducible 
spin representation of quaternionic type of a real form of 
$\ot(2m,\C)$ 
lifts to an irreducible complex representation 
of $\slt_2(\C){\oplus}\ot(2m,\C)$
whose dominant weight is either $\omegaup{+}\omegaup_{m{-}1},$ or
 $\omegaup{+}\omegaup_{m}.$ 
 Let us assume it is  $\omegaup{+}\omegaup_{m}.$
 A necessary condition to find a semisimple complex Lie algebra
 which is an effective prolongation of type $\gt_0,$ 
 with $$[\gt_0,\gt_0]{=}\slt_2(\C){\oplus}\ot(2m,\C)$$
 of an $\mt$ with $\gt_{-1}$ 
 equal to the irreducible $\slt_2(\C){\oplus}\ot(2m,\C)$-module with
 dominant weight $\omegaup{+}\omegaup_m$ is that 
 \vspace{-6pt}
\begin{equation*}\vspace{-3pt}
 \|\omegaup{+}\omegaup_{m}\|^2=\frac{1}{2}{+}\frac{m}{4}<2,
\end{equation*}
i.e. that $m{=}4,5.$ 
With  $\epi_4{=}\tfrac{1}{2}(\etaup_1{+}\etaup_2),$  
$\epi_5{=}\tfrac{1}{\sqrt{8}}(\etaup_1{+}
\etaup_2),$ set 
\begin{equation*} \!
\begin{cases}
 \Rad^{(4)}_{\;{-}2}=\{2\epi_4\},\\
 \Rad^{(4)}_{\;{-}1}=\{\epi_4{\pm}\etaup
 {+}\wq\mid \wq{\in}\Wi_{+}(4)\},\\
 \Rad^{(4)}_{\;0}=\{\pm(\etaup_1{-}\etaup_2)\}
 {\cup}\{ {\pm}\eq_i {\pm}\eq_j
 {\mid} \begin{smallmatrix} 1{\leq}i{<}j{\leq}4
 \end{smallmatrix}\!\},\\
 \Rad^{(4)}_{\;1}=\{{-}\epi_4{\pm}\etaup{+}\wq
 \mid \wq{\in}\Wi_{+}(4)\},\\
 \Rad^{(4)}_{\;2}=\{-2\epi_4\}, 
\end{cases} \!\!\!\!
\begin{cases}
 \Rad^{(5)}_{\;{-}2}=\{2\epi_5{\pm}\eq_1\mid 1{\leq}i{\leq}\},\\
 \Rad^{(5)}_{\;{-}1}=\{\epi_5{\pm}\etaup
 {+}\wq\mid \wq{\in}\Wi_{+}(5)\},\\
 \Rad^{(5)}_{\;0}=\{\pm(\etaup_1{-}\etaup_2)\}
 {\cup}\{ {\pm}\eq_i {\pm}\eq_j{\mid} 
 \begin{smallmatrix}
 1{\leq}i{<}j{\leq}5
 \end{smallmatrix}\!
 \},\\
 \Rad^{(5)}_{\;1}=\{{-}\epi_5{\pm}\etaup
 {+}\wq\mid \wq{\in}\Wi_{+}(5)\},\\
 \Rad^{(5)}_{\;2}=\{-2\epi_5{\pm}\eq_1
 \mid 1{\leq}i{\leq}5\}.
\end{cases}
\end{equation*}
Then $\Rad^{(4)}{=}{\bigcup}_{\pq{=}{-}2}^{2}\Rad^{(4)}_{\;\pq}$ 
is a root system of type $\mathbf{D}_6$
and  $\Rad^{(5)}{=}{\bigcup}_{\pq{=}{-}2}^{2}\Rad^{(5)}_{\;\pq}$ 
a root system of type $\mathbf{E}_7.$ 
Set
\begin{equation*}
 \gt^{(m)}{=}{\sum}_{\pq{=}{-}2}^{2}\gt_{\pq}^{(m)},\;\;
 \text{with}\;\; 
\begin{cases}
 \gt^{(m)}_0{=}\hg_m{\oplus}\langle\Rad^{(m)}_{\;0}
 \rangle,\;\;\dim_{\C}\hg_m{=}m{+}2,\\
 \gt_{\pq}^{(m)}=\langle\Rad^{(m)}_{\;\pq}\rangle,\;\;\;\pq{\neq}0,
\end{cases}
\end{equation*}
for $m{=}4,5.$ Then 
$\gt^{m}_0{=}\slt_2(\C){\oplus}\ot(2m)
{\oplus}\langle\varpi\rangle,$ where $\varpi$ satisfies
$[\varpi,X]{=}\pq{\cdot}{X}$ for $X{\in}\gt_{\pq}$ 
and the $\gt^{(m)}$ are maximal effective prolongation of type 
$\gt_0^{(m)}.$   \par
For $m{=}4$ each of the summands $\gt^{(4)}_{{\pm}{1}}$ 
consists of two copies of 
$S^{\C}_+(4)$ and each of $\gt^{(4)}_{{\pm}{2}}$ 
is the scalar representation. \par
For $m{=}5$ each of the summands $\gt^{(4)}_{{\pm}{1}}$ 
consists of two copies of 
$S^{\C}_+(5)$ and each of $\gt^{(4)}_{{\pm}{2}}$ 
is the complex vector representation $V^{\C}_m{\simeq}
\C^{2m}.$\par 
The only real form of $\ot(8,\C)$  
having a quaternionic spin representation
is $\ot(2,6).$ Its prolongation is the real form of $\gt^{(4)},$ 
isomorphic to $\ot(4,8),$ 
whose graded structure  is represented 
by the cross-marked Satake diagram
\begin{equation*}
 \xymatrix@R=.1pc{ \alphaup_{6} \\
\!\!\medbullet\!\!\! \ar@{-}[rd] &\alphaup_{4} 
&\alphaup_{3} & \alphaup_2 & \alphaup_{1}\\
& \!\!\!\medcirc\!\!\! \!\ar@{-}[r]  
& \!\!\!\medbullet\!\!\! \ar@{-}[r] &\!\!\medcirc\!\!\ar@{-}[r] &
 \!\!\medbullet\!\!\!
\\
\!\!\medbullet\!\!\! \ar@{-}[ru] & & & \times
\\ 
\alphaup_5 & & \\
 }
\end{equation*}
The only real form of $\ot(10,\C)$ 
having a quaternionic spin representation is $\ot(3,7)$ 
and the corresponding real form of
$\gt^{(5)}$ is of type $\mathbf{E}\mathrm{V\!{I}},$ 
with cross-marked Satake diagram 
 \begin{equation*} 
  \xymatrix@C=.9pc@R=-.3pc{  \alphaup_1 & \alphaup_3 
  &\alphaup_4 &\alphaup_5 &\alphaup_6
  &\alphaup_7\\
\!\!\medcirc\!\!\!
\ar@{-}[r]&\!\!\medcirc\!\! \ar@{-}[r]&\!\!
\medcirc\!\!\ar@{-}[r]  \ar@{-}[dddddd]
&\!\!\medbullet\!\!\ar@{-}[r]  &\!\!
\medcirc\!\!\ar@{-}[r] &\!\!\medbullet\\
&&  && \times 
\\ && \\ && \\ && \\ && \\
& & \!\medbullet\! &\!\!\!\! \!\!\!\!\!\!\!\!\!\!\!\alphaup_2}
\end{equation*}

\bibliographystyle{amsplain}
\renewcommand{\MR}[1]{}
\bibliography{homog}

\providecommand{\bysame}{\leavevmode\hbox to3em{\hrulefill}\thinspace}
\providecommand{\MR}{\relax\ifhmode\unskip\space\fi MR }
% \MRhref is called by the amsart/book/proc definition of \MR.
\providecommand{\MRhref}[2]{%
  \href{http://www.ams.org/mathscinet-getitem?mr=#1}{#2}
}
\providecommand{\href}[2]{#2}
\begin{thebibliography}{10}

\bibitem{adams1996lectures}
J.F. Adams, Z.~Mahmud, and M.~Mimura, \emph{Lectures on exceptional {Lie}
  groups}, Chicago Lectures in Mathematics, University of Chicago Press, 1996.

\bibitem{AlSp}
D.V. Alekseevsky and A.~Spiro, \emph{Prolongations of {T}anaka structures and
  regular {CR} structures}, Selected topics in {C}auchy-{R}iemann geometry,
  Quad. Mat., vol.~9, Dept. Math., Seconda Univ. Napoli, Caserta, 2001,
  pp.~1--37. \MR{2049139}

\bibitem{AMN06}
A.~Altomani, C.~Medori, and M.~Nacinovich, \emph{The {CR} structure of minimal
  orbits in complex flag manifolds}, J. {L}ie Theory \textbf{16} (2006), no.~3,
  483--530. \MR{MR2248142 (2007c:32043)}

\bibitem{AMN10b}
\bysame, \emph{On homogeneous and symmetric {CR} manifolds}, Bollettino U.M.I.
  \textbf{9} (2010), 221--165.

\bibitem{AMN06b}
\bysame, \emph{Orbits of real forms in complex flag manifolds}, Ann. Sc. Norm.
  Super. Pisa Cl. Sci. (5) \textbf{9} (2010), no.~1, 69--109. \MR{2668874}

\bibitem{AMN2013}
\bysame, \emph{Reductive compact homogeneous {CR} manifolds}, Transform. Groups
  \textbf{18} (2013), no.~2, 289--328. \MR{3055768}

\bibitem{Ara62}
S.~Araki, \emph{On root systems and an infinitesimal classification of
  irreducible symmetric spaces}, J. Math. Osaka City Univ \textbf{13} (1962),
  1--34.

\bibitem{barut}
A.~O. Barut and A.~J. Bracken, \emph{{The remarkable algebra
  {$\mathfrak{so}^*(2 n)$}, its representations, its {C}lifford algebra and
  potential applications}}, J. Phys. \textbf{A23} (1990), no.~641, 30pp.

\bibitem{n1998lie}
N.~Bourbaki, \emph{Lie groups and {L}ie algebras. {C}hapters 1--3}, Elements of
  Mathematics (Berlin), Springer-Verlag, Berlin, 1989, Translated from the
  French, Reprint of the 1975 edition. \MR{979493}

\bibitem{Bou68}
\bysame, \emph{Lie groups and {L}ie algebras. {C}hapters 4--6}, Elements of
  Mathematics (Berlin), Springer-Verlag, Berlin, 2002, Translated from the 1968
  French original by Andrew Pressley. \MR{1890629}

\bibitem{Bou82}
\bysame, \emph{Lie groups and {L}ie algebras. {C}hapters 7--9}, Elements of
  Mathematics (Berlin), Springer-Verlag, Berlin, 2005, Translated from the 1975
  and 1982 French originals by Andrew Pressley. \MR{2109105}

\bibitem{Cartan1910}
E.~Cartan, \emph{Les syst\`emes de {P}faff, \`a cinq variables et les
  \'equations aux d\'eriv\'ees partielles du second ordre}, Ann. Sci. \'Ecole
  Norm. Sup. (3) \textbf{27} (1910), 109--192. \MR{1509120}

\bibitem{CC2017}
P.~Ciatti and M.G. Cowling, \emph{The {L}evi decomposition of a graded {L}ie
  algebra}, Riv. Math. Univ. Parma (N.S.) \textbf{9} (2018), no.~2, 373--378.
  \MR{3919217}

\bibitem{Deligne99}
P.~Deligne, P.~Etingof, D.~S. Freed, L.~C. Jeffrey, D.~Kazhdan, J.~W. Morgan,
  D.~R. Morrison, and Edward Witten (eds.), \emph{{Quantum fields and strings:
  A course for mathematicians. Vol. 1, 2}}, AMS, 1999.

\bibitem{Mor2018}
K.~Furutani, G.M. Molina, I.~Markina, T.~Morimoto, and A.~Vasil'ev, \emph{Lie
  algebras attached to {C}lifford modules and simple graded {L}ie algebras}, J.
  Lie Theory \textbf{28} (2018), no.~3, 843--864. \MR{3808896}

\bibitem{golubitsky1971}
M.~Golubitsky and B.~Rothschild, \emph{Primitive subalgebras of exceptional
  {Lie} algebras.}, Pacific J. Math. \textbf{39} (1971), no.~2, 371--393.

\bibitem{GS}
V.W. Guillemin and S.Sternberg, \emph{An algebraic model of transitive
  differential geometry}, Bull. Amer. Math. Soc. \textbf{70} (1964), 16--47.

\bibitem{kac_1990}
V.~G. Kac, \emph{Infinite-dimensional {L}ie algebras}, 3 ed., Cambridge
  University Press, 1990.

\bibitem{Kob}
S.~Kobayashi, \emph{Transformations groups in differential geometry},
  Ergebnisse der Mathematik und ihrer Grenzgebiet, 70, Springer-Verlag, Berlin,
  1972.

\bibitem{LN05}
A.~Lotta and M.~Nacinovich, \emph{On a class of symmetric {CR} manifolds}, Adv.
  Math. \textbf{191} (2005), no.~1, 114--146. \MR{MR2102845 (2006f:53072)}

\bibitem{LN08}
\bysame, \emph{C{R}-admissible {$Z\sb 2$}-gradations and {CR}-symmetries}, Ann.
  Mat. Pura Appl. (4) \textbf{187} (2008), no.~2, 221--236. \MR{MR2372800
  (2009b:32049)}

\bibitem{MMN2018}
S.~Marini, C.~Medori, and M.~Nacinovich, \emph{$\mathcal{L}$-prolongations of
  graded {L}ie algebras}, arXiv:2887845, October 2019.

\bibitem{NMSM}
S.~Marini, C.~Medori, M.~Nacinovich, and A.~Spiro, \emph{On transitive contact
  and {$CR$} algebras}, arXiv:1706.03512v1 [math.DG] (2017), 1--22, to appear
  in Annali della Scuola Normale Superiore di Pisa - Classe di Scienze.

\bibitem{MaNa1}
S.~Marini and M.~Nacinovich, \emph{Orbits of real forms, {M}atsuki duality and
  {$CR$}-cohomology}, Complex and symplectic geometry, Springer INdAM Ser.,
  vol.~21, Springer, Cham, 2017, pp.~149--162. \MR{3645313}

\bibitem{MaNa2}
\bysame, \emph{Mostow's fibration for canonical embeddings of compact
  homogeneous {CR} manifolds}, Rend. Semin. Mat. Univ. Padova \textbf{140}
  (2018), 1--43. \MR{3881753}

\bibitem{MN97}
C.~Medori and M.~Nacinovich, \emph{Levi-{T}anaka algebras and homogeneous {CR}
  manifolds}, Compositio Math. \textbf{109} (1997), no.~2, 195--250.
  \MR{MR1478818 (99d:32007)}

\bibitem{MN98}
\bysame, \emph{Classification of semisimple {L}evi-{T}anaka algebras}, Ann.
  Mat. Pura Appl. (4) \textbf{174} (1998), 285--349. \MR{MR1746933
  (2001e:17037)}

\bibitem{MN02}
\bysame, \emph{The {L}evi-{M}alcev theorem for graded {CR} {L}ie algebras},
  Recent advances in {L}ie theory ({V}igo, 2000), Res. Exp. Math., vol.~25,
  Heldermann, Lemgo, 2002, pp.~341--346. \MR{MR1937989 (2003m:17027)}

\bibitem{Moody68}
R.V. Moody, \emph{A new class of {L}ie algebras}, Journal of Algebra
  \textbf{10} (1968), no.~2, 211--230.

\bibitem{Most56}
G.~D. Mostow, \emph{Fully reducible subgroups of algebraic groups}, Amer. J.
  Math. \textbf{78} (1956), 200--221. \MR{MR0092928 (19,1181f)}

\bibitem{Ottazzi2011}
A.~Ottazzi and B.~Warhurst, \emph{Algebraic prolongation and rigidity of
  {C}arnot groups}, Monatshefte f\"ur Mathematik \textbf{162} (2011), no.~2,
  179--195.

\bibitem{Reu93}
C.~Reutenauer, \emph{Free {L}ie algebras}, London Mathematical Society
  Monographs. New Series, vol.~7, The Clarendon Press, Oxford University Press,
  New York, 1993, Oxford Science Publications. \MR{1231799}

\bibitem{Sternberg}
S.~Sternberg, \emph{Lectures on differential geometry}, Chelsea Publ.\ Co., New
  York, 1983, First edition: Prentice Hall, Inc.\ Englewood Cliffs, N.J., 1964.

\bibitem{Tan67}
N.~Tanaka, \emph{On generalized graded {L}ie algebras and geometric structures.
  {I}}, J. Math. Soc. Japan \textbf{19} (1967), 215--254. \MR{MR0221418 (36
  \#4470)}

\bibitem{Tan70}
\bysame, \emph{On differential systems, graded {L}ie algebras and
  pseudogroups}, J. Math. Kyoto Univ. \textbf{10} (1970), 1--82. \MR{MR0266258
  (42 \#1165)}

\bibitem{Warhurst2007}
B.~Warhurst, \emph{Tanaka prolongation of free {Lie} algebras}, Geometriae
  Dedicata \textbf{130} (2007), no.~1, 59--69.

\bibitem{Wil1971}
R.L. Wilson, \emph{Irreducible {L}ie algebras of infinite type}, Proc. Amer.
  Math. Soc. (1971), no.~29, 243--249.

\bibitem{Yam93}
K.~Yamaguchi, \emph{Differential systems associated with simple graded {L}ie
  algebras}, Adv. Stud. Pure Math. \textbf{22} (1993), 413--494, Progress in
  differential geometry.

\end{thebibliography}
\end{document}